\newtheorem{theorem}{Theorem}[section]
\newtheorem{definition}{Definition}[section]
\newtheorem{lemma}{Lemma}[section]
\newtheorem{corollary}{Corollary}[section]
\newtheorem{remark}{Remark}[section]
\newtheorem{proposition}{Proposition}[section]
\numberwithin{equation}{section}
\begin{document}

\title{On Rayleigh Quotient Iteration for Dual Quaternion Hermitian Eigenvalue Problem\thanks{This work is supported by the National Natural Science Foundation of China under Grants 12371023, 12201149, 12361079, and the Natural Science Foundation of Guangxi Province  under Grant 2023GXNSFAA026067.}
}
\author
{Shan-Qi Duan\thanks{Department of Mathematics and Newtouch Center for Mathematics, Shanghai University, Shanghai 200444, P.R. China (Email: {\tt shanqiduan1020@126.com})}
\quad Qing-Wen Wang\thanks{Corresponding author. Department of Mathematics and Newtouch Center for Mathematics; Collaborative Innovation Center for the Marine Artificial Intelligence, Shanghai University, Shanghai 200444, P.R. China (Email: {\tt wqw@t.shu.edu.cn})}
\quad Xue-Feng Duan\thanks{College of Mathematics and Computational Science, Guilin University of Electronic Technology, Guilin 541004, P.R. China (Email: {\tt guidian520@126.com})}
}

\date{}
\maketitle

\begin{abstract}

The application of eigenvalue theory to dual quaternion Hermitian matrices holds significance in the realm of multi-agent formation control. In this paper, we study the Rayleigh quotient iteration (RQI) for solving the right eigenpairs of dual quaternion Hermitian matrices. Combined with dual representation, the RQI algorithm can effectively compute the eigenvalue along with the associated eigenvector of the dual quaternion Hermitian matrices. Furthermore, by utilizing minimal residual property of the Rayleigh Quotient, a convergence analysis of the Rayleigh quotient iteration is derived. Numerical examples are provided to illustrate the high accuracy and low CPU time cost of the proposed Rayleigh quotient iteration compared with the power method for solving the dual quaternion Hermitian eigenvalue problem.

\vskip 2mm \noindent\textbf{Keywords}: Dual quaternion Hermitian matrix; Eigenvalue problem; Rayleigh quotient iteration; Convergence analysis

\vskip 2mm \noindent\textbf{Mathematics Subject Classification}: 15A18; 15B57; 65F10; 65F15
\end{abstract}

\section{Introduction}

\noindent This paper concerns the dual quaternion Hermitian right eigenvalue problem
\begin{equation}\label{problem}
\hat{A} \hat{\mathbf{x}}=\hat{\mathbf{x}} \hat{\lambda},
\end{equation}
where $\hat{A} \in \mathbb{DQ}^{n \times n}$ is the objective dual quaternion Hermitian matrix, $\hat{\lambda} \in \mathbb{DQ}$ and $\hat{\mathbf{x}} \in \mathbb{DQ}^{n}$ constitute the right eigenpair of $\hat{A}$.

Dual quaternion numbers and dual quaternion matrices have been widely applied in rigid body motions \cite{brambley2020unit,kenright2012biginners,cheng2016dual,wang2012dual}, hand-eye calibration problems \cite{daniilidis1999hand,chen2024dual}, simultaneous localization and mapping (SLAM) problems \cite{bryson2007building,cadena2016past,carlone2015initialization}, and kinematic modeling and control \cite{qi2023dualcontrol}. Qi et al. \cite{qi2022dual} first introduced a total order and an absolute value function for dual numbers. In addition, they also defined the magnitude of a dual quaternion and extended the norm to dual quaternion vectors. In \cite{qi2021eigenvalues}, Qi and Luo investigated the right and left eigenvalues of square dual quaternion matrices. If a right eigenvalue corresponds to a dual number, it is also a left eigenvalue. In such cases, this dual number is referred to as an eigenvalue of the dual quaternion matrix. The researchers demonstrated that the right eigenvalues of a dual quaternion Hermitian matrix are themselves dual numbers, making them valid eigenvalues. Furthermore, it was established that an $n$-by-$n$ dual quaternion Hermitian matrix possesses precisely $n$ eigenvalues. The unitary decomposition for a dual quaternion Hermitian matrix and singular value decomposition for a general dual quaternion matrix were also proposed. According to the singular value decomposition for a general dual quaternion matrix, Ling et al. \cite{ling2022singular} defined the rank and appreciable rank of dual quaternion matrices, and some properties were studied. In \cite{qi2023dualcontrol}, the significance of the eigenvalue theory of dual quaternion Hermitian matrices in multi-agent formation control was demonstrated. 

In \cite{cui2023power}, Cui and Qi first
proposed a power method for efficiently computing the dominant eigenvalue of a dual quaternion Hermitian matrix and reformulated the simultaneous localization and mapping (SLAM) problem as a rank-one dual quaternion completion problem, leveraging the findings from the eigenvalue analysis. So far, the methods for numerically computing the eigenvalues of a dual quaternion Hermitian matrix are still rarely explored.

There are several methods to solve the right quaternion eigenvalue problem, such as the power method \cite{li2019power}, and inverse iteration \cite{jia2023computing}. When aiming to compute eigenvalues of a Hermitian matrix around a specified target, a common approach involves using a shift-and-invert transformation. This leads to effective iteration methods like the inverse iteration and the Rayleigh quotient iteration (RQI) \cite{bai2019multistep,parlett1998symmetric}. The RQI, in particular, utilizes a shift based on the Rayleigh quotient for the currently available approximate eigenvector. As a result, the RQI exhibits a nonstationary iteration process with a local cubic convergence rate \cite{berns2006inexact,notay2003convergence,parlett1974rayleigh,saad2011numerical,smit1999effects}. Since RQI has a faster convergence rate, we develop the Rayleigh quotient iteration to compute the eigenpairs of the dual quaternion Hermitian matrix in this paper.

In this paper, we study the problem of solving the right eigenvalues for dual quaternion Hermitian matrices. The main contributions of this paper include the following: Firstly, we propose a Rayleigh quotient iteration (RQI) method for computing the eigenvalue and the associated eigenvector of the dual quaternion Hermitian matrix. Furthermore, by utilizing minimal residual property of the Rayleigh quotient, a convergence analysis of the Rayleigh quotient iteration is derived. Secondly, we demonstrate the superior accuracy and efficiency of the proposed RQI method for solving dual quaternion Hermitian eigenvalue problem, e.g., the residual errors are several orders of magnitudes smaller than the power method. Thirdly, we develop the dual representation of the dual quaternion matrix. Specifically, we establish an isomorphic mapping between the dual quaternion matrix and the dual matrix. This dual representation plays a crucial role in solving dual quaternion linear systems. The proposed dual representation also lays a foundation for the development of post-renewal structure-preserving algorithms.

The rest of this paper is organized as follows. In Section \ref{section2}, we introduce some notations used throughout this paper. In addition, we review some basic properties of the dual quaternion matrices. In Section \ref{section3}, the Rayleigh quotient iteration (RQI) for computing the eigenpair is proposed. In Section \ref{section4}, we derive a convergence analysis of the Rayleigh quotient iteration. In Section \ref{section5}, numerical experiments on dual quaternion Hermitian eigenvalue problems are provided to illustrate the effectiveness of the proposed method compared with the power method. In Section \ref{section6}, we summarize this paper.

\section{Dual Quaternions and Dual Quaternion Matrices }\label{section2}

\noindent In this section, some notations and basic definitions are introduced, which will be used in the rest of the paper.

\subsection{Dual numbers, quaternions and dual quaternions}

\noindent Dual number was invented by Clifford \cite{clifford1871preliminary} in 1873. Denote the set of dual numbers as
\begin{equation*}
\mathbb{D}=\left\{q=q_{\mathrm{st}}+q_{\mathcal{I}}\epsilon  ~ | ~ q_{\mathrm{st}},~ q_{\mathcal{I}}\in\mathbb{R} \right\},
\end{equation*}
where $\epsilon$ is the infinitesimal unit, satisfying $\epsilon^2=0$. We call $q_{\mathrm{st}}$ the real part or the standard part of $q$, and $q_{\mathcal{I}}$ the dual part or the infinitesimal part of $q$. The infinitesimal unit $\epsilon$ is commutative in multiplication with real numbers, complex numbers and quaternions. Dual numbers constitute a commutative algebra with a dimension of two over the real number field. When $q_{\mathrm{st}}\neq 0$, we say that $q$ is appreciable; otherwise, it is deemed infinitesimal.

The infinitesimal unit $\epsilon$ in dual numbers is an algebraic construct representing an infinitesimally small quantity. In fact, $\epsilon$ is a formal symbol representing an infinitesimal change. It's not a real number but a mathematical tool that allows us to capture the idea of an infinitely small quantity. In geometry, the infinitesimal element $\epsilon$ can be seen as representing an infinitesimal displacement or rotation. This makes dual numbers useful for modeling small transformations in kinematics. The dual quaternion comprises two parts that can describe the rigid-body transformation of one coordinate frame with respect to another, where the standard part and dual part are quaternions representing the rotation and translation, respectively \cite{brambley2020unit,cheng2016dual,kenright2012biginners}.

In \cite{qi2022dual}, a total order for dual numbers was introduced. Given two dual numbers $p=p_{\mathrm{st}}+p_{\mathcal{I}}\epsilon,q=q_{\mathrm{st}}+q_{\mathcal{I}}\epsilon \in \mathbb{D}$, we have $p < q$, if either $p_{\mathrm{st}} <q_{\mathrm{st}}$ or $p_{\mathrm{st}}=q_{\mathrm{st}}$ and $p_{\mathcal{I}} < q_{\mathcal{I}}$; If $p_{\mathrm{st}}=q_{\mathrm{st}}$ and $p_{\mathcal{I}}= q_{\mathcal{I}}$, then $p = q$. Specifically, we call $p$ positive, nonnegative, nonpositive, or negative when it satisfies the conditions $p > 0,~ p \geq 0,~ p \leq 0$ or $p < 0$, respectively.

The zero element in $\mathbb{D}$ is $0_{D}=0+0 \epsilon$ and the unit element is $1_{D}=1+0 \epsilon$. For two dual numbers $p=p_{\mathrm{st}}+p_{\mathcal{I}}\epsilon,~ q=q_{\mathrm{st}}+q_{\mathcal{I}}\epsilon \in \mathbb{D}$, addition and multiplication of $p$ and $q$ are defined as follows
\begin{equation*}
\begin{aligned}
	& p+q= p_{\mathrm{st}}+q_{\mathrm{st}}+(p_{\mathcal{I}}+q_{\mathcal{I}})\epsilon,\\
	& pq=p_{\mathrm{st}}q_{\mathrm{st}}+(p_{\mathrm{st}}q_{\mathcal{I}}+p_{\mathcal{I}}q_{\mathrm{st}})\epsilon.
\end{aligned}
\end{equation*}
And the division of $p$ and $q$ is unambiguous if $q_{\mathrm{st}}\neq 0$ is appreciable and takes the form \cite{VELDKAMP1976141}
\begin{equation*}
\frac{p_{\mathrm{st}}+p_{\mathcal{I}} \epsilon}{q_{\mathrm{st}}+q_{\mathcal{I}} \epsilon}=
\frac{p_{\mathrm{st}}}{q_{\mathrm{st}}}+\left(\frac{p_{\mathcal{I}}}{q_{\mathrm{st}}}-\frac{p_{\mathrm{st}}q_{\mathcal{I}}}{q_{\mathrm{st}}^{2}} \right) \epsilon, ~ q_{\mathrm{st}} \neq 0.
\end{equation*}

For an appreciable dual number $q=q_{\mathrm{st}}+q_{\mathcal{I}}\epsilon$, it is invertible and
\begin{equation*}
q^{-1}=q_{\mathrm{st}}^{-1}-q_{\mathrm{st}}^{-1} q_{\mathcal{I}} q_{\mathrm{st}}^{-1} \epsilon,
\end{equation*}
as indicated by the equation $q q^{-1}=q^{-1} q=1$. If $q$ is infinitesimal, then $q$ is not invertible. For two dual numbers $p=p_{\mathrm{st}}+p_{\mathcal{I}}\epsilon,~ q=q_{\mathrm{st}}+q_{\mathcal{I}}\epsilon \in \mathbb{D}$, we have
\begin{equation*}
	\frac{p_{\mathrm{st}}+p_{\mathcal{I}} \epsilon}{q_{\mathrm{st}}+q_{\mathcal{I}} \epsilon}=
	\frac{p_{\mathrm{st}}}{q_{\mathrm{st}}}+\left(\frac{p_{\mathcal{I}}}{q_{\mathrm{st}}}-\frac{p_{\mathrm{st}}q_{\mathcal{I}}}{q_{\mathrm{st}}^{2}} \right) \epsilon=(p_{\mathrm{st}}+p_{\mathcal{I}} \epsilon)q^{-1}, \ q_{\mathrm{st}} \neq 0.
\end{equation*}One can observe that the division of dual numbers is the inverse operation of the multiplication of dual numbers.

For a nonnegative and appreciable dual number $q$, the square root of $q$ remains a nonnegative dual number. If $q$ is positive and appreciable, then
\begin{equation*}
\sqrt{q}=\sqrt{q_{\mathrm{st}}}+\frac{q_{\mathcal{I}}}{2 \sqrt{q_{\mathrm{st}}}} \epsilon.
\end{equation*}

The absolute value of $p=p_{\mathrm{st}}+p_{\mathcal{I}}\epsilon \in \mathbb{D}$ is defined as
$$
|p|=\begin{cases}
\left|p_{\mathrm{st}}\right|+\operatorname{sgn}\left(p_{\mathrm{st}}\right) p_{\mathcal{I}} \epsilon, & \mbox{if} ~ p_{\mathrm{st}} \neq 0, \\
\left|p_{\mathcal{I}}\right| \epsilon, & \mbox{otherwise},
\end{cases}
$$
where ``$\operatorname{sgn}(\cdot)$" is given by
\begin{equation*}
\operatorname{sgn}(a)=\begin{cases}
	1, & \mbox{if} ~ a>0, \\
	0, & \mbox{if} ~ a=0, \\
	-1, & \mbox{if} ~ a<0,
\end{cases} \quad \forall a \in \mathbb{R}.
\end{equation*}
For more details on dual numbers, see \cite{qi2022dual} and the references therein.

Denote the set of quaternions as
\begin{equation*}
\mathbb{Q}=\left\{\tilde{q}=q_{1}+q_{2}\mathbf{i}+q_{3}\mathbf{j}+q_{4}\mathbf{k}~|~q_{1},q_{2},q_{3},q_{4}\in \mathbb{R}\right\},
\end{equation*}
where $\mathbf{i}, \mathbf{j}, \mathbf{k}$ are three imaginary units of quaternions, satisfying 
\begin{equation*}
\begin{aligned}
	& \mathbf{i}^{2}=\mathbf{j}^{2}=\mathbf{k}^{2}=\mathbf{ijk}=-1, \\
	& \mathbf{ij}=-\mathbf{ji}=\mathbf{k}, \ \mathbf{jk}=-\mathbf{kj}=\mathbf{i}, \ \mathbf{ki}=-\mathbf{ik}=\mathbf{j}.
\end{aligned}
\end{equation*} 
The real part of $\tilde{q}$ is $\mathrm{Re}(\tilde{q}) = q_{1}$. The imaginary part of $\tilde{q}$ is $\mathrm{Im}(\tilde{q}) = q_{2}\mathbf{i}+q_{3}\mathbf{j}+q_{4}\mathbf{k}$. A quaternion is called imaginary when its real part is equal to zero. The multiplication of quaternions adheres to the distributive law but is noncommutative. 

The zero element in $\mathbb{Q}$ is $\tilde{0}=0+0\mathbf{i}+0\mathbf{j}+0\mathbf{k}$ and the unit element is $\tilde{1}=1+0\mathbf{i}+0\mathbf{j}+0\mathbf{k}$. For any $\tilde{q}=q_{1}+q_{2}\mathbf{i}+q_{3}\mathbf{j}+q_{4}\mathbf{k} \in \mathbb{Q}$, the conjugate of a quaternion is defined as 
\begin{equation*}
\tilde{q}^{\ast}=q_{1}-q_{2}\mathbf{i}-q_{3}\mathbf{j}-q_{4}\mathbf{k}.
\end{equation*}
The magnitude of $\tilde{q}$ is $|\tilde{q}|=\sqrt{\tilde{q}^{\ast}\tilde{q}}=\sqrt{q_{1}^{2}+q_{2}^{2}+q_{3}^{2}+q_{4}^{2}}$, it follows that the inverse of a nonzero quaternion $\tilde{q}$ is given by $\tilde{q}^{-1}=\tilde{q}^{*} / |\tilde{q}|^{2}$.

Denote the set of dual quaternions as 
\begin{equation*}
\mathbb{DQ}=\left\{\hat{q}=\tilde{q}_{\mathrm{st}}+\tilde{q}_{\mathcal{I}}\epsilon  ~ | ~ \tilde{q}_{\mathrm{st}},~ \tilde{q}_{\mathcal{I}}\in\mathbb{Q} \right\}.
\end{equation*}
Similarly, we call $\tilde{q}_{\mathrm{st}},\tilde{q}_{\mathcal{I}}$ the standard part and the dual part of $\hat{q}$, respectively. If $\tilde{q}_{\mathrm{st}} \neq 0$, then $\hat{q}$ is appreciable, otherwise, $\hat{q}$ is infinitesimal.
Let $\hat{p}=\tilde{p}_{\mathrm{st}}+\tilde{p}_{\mathcal{I}} \epsilon, \hat{q}=\tilde{q}_{\mathrm{st}}+\tilde{q}_{\mathcal{I}} \epsilon \in \mathbb{DQ}$, then the addition of $\hat{p}$ and $\hat{q}$ is
$$
\hat{p}+\hat{q}=\tilde{p}_{\mathrm{st}}+\tilde{q}_{\mathrm{st}}+(\tilde{p}_{\mathcal{I}}+\tilde{q}_{\mathcal{I}})\epsilon
$$
and the product of $\hat{p}$ and $\hat{q}$ is
\begin{equation*}
\hat{p}\hat{q}=\tilde{p}_{\mathrm{st}} \tilde{q}_{\mathrm{st}}+(\tilde{p}_{\mathrm{st}} \tilde{q}_{\mathcal{I}}+\tilde{p}_{\mathcal{I}} \tilde{q}_{\mathrm{st}})\epsilon.
\end{equation*}

The zero element in $\mathbb{DQ}$ is $\hat{0}=\tilde{0}+\tilde{0} \epsilon$ and the unit element is $\hat{1}=\tilde{1}+\tilde{0} \epsilon$. In general, $\hat{p}\hat{q} \neq \hat{q}\hat{p}$. The conjugate of  $\hat{p}=\tilde{p}_{\mathrm{st}}+\tilde{p}_{\mathcal{I}} \epsilon$ is  $\hat{p}^{\ast}=\tilde{p}_{\mathrm{st}}^{\ast}+\tilde{p}_{\mathcal{I}}^{\ast} \epsilon$. A dual quaternion $\hat{q}=\tilde{q}_{\mathrm{st}}+\tilde{q}_{\mathcal{I}} \epsilon$ is invertible if and only if $\hat{q}$ is appreciable. In this case, we have
$$
\hat{q}^{-1}=\tilde{q}_{\mathrm{st}}^{-1}-\tilde{q}_{\mathrm{st}}^{-1} \tilde{q}_{\mathcal{I}} \tilde{q}_{\mathrm{st}}^{-1} \epsilon.
$$
The magnitude of $\hat{q}$ is defined as
$$
|\hat{q}|=\begin{cases}
\left|\tilde{q}_{\mathrm{st}}\right|+\dfrac{\tilde{q}_{\mathrm{st}} \tilde{q}_{\mathcal{I}}^{\ast}+\tilde{q}_{\mathcal{I}} \tilde{q}_{\mathrm{st}}^{\ast}}{2\left|\tilde{q}_{\mathrm{st}}\right|} \epsilon, & \mbox{if} ~ \tilde{q}_{\mathrm{st}} \neq 0, \\
\left|\tilde{q}_{\mathcal{I}}\right| \epsilon, & \mbox{otherwise},
\end{cases}
$$
which is a dual number. A dual quaternion  $\hat{q}=\tilde{q}_{\mathrm{st}}+\tilde{q}_{\mathcal{I}} \epsilon$ is called a unit dual quaternion if
$$
\left|\tilde{q}_{\mathrm{st}}\right|=1 ~ \text{and} ~ \tilde{q}_{\mathrm{st}} \tilde{q}_{\mathcal{I}}^{\ast}+\tilde{q}_{\mathcal{I}} \tilde{q}_{\mathrm{st}}^{\ast}=0.$$
Further, for any dual quaternion number  $\hat{q}=\tilde{q}_{\mathrm{st}}+\tilde{q}_{\mathcal{I}} \epsilon \in \mathbb{DQ}$ and dual number  $a=a_{\mathrm{st}}+a_{\mathcal{I}} \epsilon \in \mathbb{D}$, there is
$$
\frac{\hat{q}}{a}=\frac{\tilde{q}_{\mathrm{st}}+\tilde{q}_{\mathcal{I}} \epsilon}{a_{\mathrm{st}}+a_{\mathcal{I}} \epsilon}=
\frac{\tilde{q}_{\mathrm{st}}}{a_{\mathrm{st}}}+\left(\frac{\tilde{q}_{\mathcal{I}}}{a_{\mathrm{st}}}-\frac{\tilde{q}_{\mathrm{st}}}{a_{\mathrm{st}}} \frac{a_{\mathcal{I}}}{a_{\mathrm{st}}}\right) \epsilon, ~ a_{\mathrm{st}} \neq 0.
$$

\subsection{Dual quaternion matrices}

\noindent Denote the set of dual quaternion matrices as
\begin{equation*}
\mathbb{DQ}^{m\times n}=\left\{\hat{A}=\tilde{A}_{\mathrm{st}}+\tilde{A}_{\mathcal{I}}\epsilon  ~ | ~ \tilde{A}_{\mathrm{st}}, ~ \tilde{A}_{\mathcal{I}}\in\mathbb{Q}^{m\times n} \right\}.
\end{equation*}
Again, we call $\tilde{A}_{\mathrm{st}}, \tilde{A}_{\mathcal{I}}$ the standard part and the dual part of $\hat{A}$, respectively. The conjugate transpose of $\hat{A}$ is $\hat{A}^{\ast}=\tilde{A}_{\mathrm{st}}^{\ast}+\tilde{A}_{\mathcal{I}}^{\ast} \epsilon$.

The zero matrix in $\mathbb{DQ}^{m \times n}$ is $\hat{\mathbf{O}}=\tilde{\mathbf{O}}+\tilde{\mathbf{O}} \epsilon$ and the identity matrix is $\hat{\mathbf{I}}=\tilde{\mathbf{I}}+\tilde{\mathbf{O}} \epsilon$. We say that a square dual quaternion matrix $\hat{A} \in \mathbb{DQ}^{n \times n}$ is normal if $\hat{A}^{\ast} \hat{A}=\hat{A} \hat{A}^{\ast}$; Hermitian if $\hat{A}^{\ast}=\hat{A}$; Unitary if $\hat{A}^{\ast} \hat{A}=I$, where $I$ is the identity matrix; Invertible (nonsingular) if there exists a matrix $\hat{B} \in \mathbb{DQ}^{n \times n}$ such that $\hat{A}\hat{B}=\hat{B}\hat{A}=I$. In this case, we denote $\hat{A}^{-1}=\hat{B}$. We have $(\hat{A}\hat{B})^{-1}=\hat{B}^{-1}\hat{A}^{-1}$ if $\hat{A}$ and $\hat{B}$ are invertible, and  $(\hat{A}^{\ast})^{-1}=(\hat{A}^{-1})^{\ast}$ if $\hat{A}$ is invertible.

Given a dual quaternion Hermitian matrix $\hat{A} \in \mathbb{DQ}^{n \times n}$, for any  $\hat{\mathbf{x}} \in \mathbb{DQ}^{n}$, the following equality holds:
$$
(\hat{\mathbf{x}}^{*} \hat{A} \hat{\mathbf{x}})^{\ast}=\hat{\mathbf{x}}^{\ast} \hat{A} \hat{\mathbf{x}}.
$$
This implies that $\hat{\mathbf{x}}^{\ast} \hat{A} \hat{\mathbf{x}}$ is a dual number. A dual quaternion Hermitian matrix $\hat{A} \in \mathbb{DQ}^{n \times n}$ is called positive semidefinite if for any $\hat{\mathbf{x}} \in \mathbb{DQ}^{n},  \hat{\mathbf{x}}^{\ast} \hat{A} \hat{\mathbf{x}} \geq \hat{0}$; $\hat{A}$ is called positive definite if for any  $\hat{\mathbf{x}} \in \mathbb{DQ}^{n}$ with  $\hat{\mathbf{x}}$ being appreciable, we have  $\hat{\mathbf{x}}^{\ast} \hat{A} \hat{\mathbf{x}} > \hat{0}$ and is appreciable. A square quaternion matrix $\hat{A} \in \mathbb{DQ}^{n \times n}$ is unitary if and only if its column (row) vectors constitute an orthogonal basis of $\mathbb{DQ}^{n}$.

The $2$-norm of a dual quaternion vector $\hat{\mathbf{x}}=(\hat{x}_{1},\cdots,\hat{x}_{n})^{\top}\in \mathbb{DQ}^{n \times 1}$ is
\begin{equation*}
\|\hat{\mathbf{x}}\|_{2}=\begin{cases}
	\sqrt{\sum_{i=1}^{n}\left|\hat{x}_{i}\right|^{2}}, & \mbox{if} ~ \tilde{\mathbf{x}}_{\mathrm{st}} \neq \tilde{\mathbf{0}}, \\
	\sqrt{\sum_{i=1}^{n}\left|\tilde{x}_{i,\mathcal{I}}\right|^{2}} \epsilon, & \mbox {if} ~ \tilde{\mathbf{x}}_{\mathrm{st}}=\tilde{\mathbf{0}} ~ \text{and} ~ \hat{\mathbf{x}}=\tilde{\mathbf{x}}_{\mathcal{I}} \epsilon.
\end{cases}
\end{equation*}

For two given dual quaternion vectors $\hat{\mathbf{x}}=\left(\hat{x}_{1}, \hat{x}_{2}, \ldots, \hat{x}_{n}\right)^{\top}, \hat{\mathbf{y}}=\left(\hat{y}_{1}, \hat{y}_{2}, \ldots, \hat{y}_{n}\right)^{\top}\in \mathbb{DQ}^{n}$, the dual quaternion-valued inner product $\langle\hat{\mathbf{x}}, \hat{\mathbf{y}}\rangle$ is defined as $$
\langle\hat{\mathbf{x}}, \hat{\mathbf{y}}\rangle=\hat{\mathbf{y}}^{\ast}\hat{\mathbf{x}}=\sum_{i=1}^{n} \hat{y}_{i}^{*} \hat{x}_{i}.$$ 
It is easy to see that $\langle\hat{\mathbf{x}} \hat{a}+\hat{\mathbf{z}} \hat{b}, \hat{\mathbf{y}}\rangle=\langle\hat{\mathbf{x}}, \hat{\mathbf{y}}\rangle \hat{a}+\langle\hat{\mathbf{z}}, \hat{\mathbf{y}}\rangle \hat{b}$ and  $\langle\hat{\mathbf{x}}, \hat{\mathbf{y}}\rangle=\langle\hat{\mathbf{y}}, \hat{\mathbf{x}}\rangle^{\ast}$ for any $\hat{\mathbf{x}}, \hat{\mathbf{y}}, \hat{\mathbf{z}} \in \mathbb{DQ}^{n}$ and  $\hat{a}, \hat{b} \in \mathbb{DQ}$. We say that $\hat{\mathbf{x}}$ and  $\hat{\mathbf{y}}$ are orthogonal to each other if  $\langle\hat{\mathbf{x}}, \hat{\mathbf{y}}\rangle=0$, denote by $\hat{\mathbf{x}} \perp \hat{\mathbf{y}}$. And for any $\hat{\mathbf{x}} \in \mathbb{DQ}^{n}$, $\langle\hat{\mathbf{x}}, \hat{\mathbf{x}}\rangle$ is a nonnegative dual number, and if $\hat{\mathbf{x}}$ is appreciable, $\langle\hat{\mathbf{x}}, \hat{\mathbf{x}}\rangle$ is a positive dual number \cite{qi2022dual}.

\begin{lemma}\label{induce}
	Given an appreciable dual quaternion vector $\hat{\mathbf{x}}\in \mathbb{DQ}^{n}$, the norm $\|\cdot\|_{2}$ is induced by the inner product, i.e. $\|\hat{\mathbf{x}}\|_{2}=\sqrt{\langle\hat{\mathbf{x}},\hat{\mathbf{x}}\rangle}$.
\end{lemma}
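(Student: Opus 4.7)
The plan is to reduce the vector identity to the scalar identity $\hat{q}^{\ast}\hat{q} = |\hat{q}|^{2}$ applied componentwise, and then to assemble the conclusion by unpacking the definitions of the inner product, the $2$-norm, and the dual-number square root. By the definition of $\langle\cdot,\cdot\rangle$ one has
\begin{equation*}
\langle\hat{\mathbf{x}},\hat{\mathbf{x}}\rangle \;=\; \hat{\mathbf{x}}^{\ast}\hat{\mathbf{x}} \;=\; \sum_{i=1}^{n}\hat{x}_{i}^{\ast}\hat{x}_{i},
\end{equation*}
so once the scalar identity is established, summing over $i$ and comparing with the appreciable branch $\|\hat{\mathbf{x}}\|_{2}^{2} = \sum_{i=1}^{n}|\hat{x}_{i}|^{2}$ of the definition of $\|\cdot\|_{2}$ will finish the argument.

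To verify the scalar identity, I would write $\hat{x}_{i} = \tilde{x}_{i,\mathrm{st}} + \tilde{x}_{i,\mathcal{I}}\epsilon$ and split according to the two branches of $|\cdot|$. When $\tilde{x}_{i,\mathrm{st}} \neq 0$, a direct expansion of $\hat{x}_{i}^{\ast}\hat{x}_{i}$ and a squaring of the magnitude formula (using $\epsilon^{2}=0$) yield
\begin{equation*}
\hat{x}_{i}^{\ast}\hat{x}_{i} \;=\; |\tilde{x}_{i,\mathrm{st}}|^{2} + \bigl(\tilde{x}_{i,\mathrm{st}}^{\ast}\tilde{x}_{i,\mathcal{I}} + \tilde{x}_{i,\mathcal{I}}^{\ast}\tilde{x}_{i,\mathrm{st}}\bigr)\epsilon,\qquad
|\hat{x}_{i}|^{2} \;=\; |\tilde{x}_{i,\mathrm{st}}|^{2} + \bigl(\tilde{x}_{i,\mathrm{st}}\tilde{x}_{i,\mathcal{I}}^{\ast} + \tilde{x}_{i,\mathcal{I}}\tilde{x}_{i,\mathrm{st}}^{\ast}\bigr)\epsilon.
\end{equation*}
The two $\epsilon$-coefficients coincide because, for quaternions, $\tilde{a}\tilde{b}^{\ast} + \tilde{b}\tilde{a}^{\ast} = 2\,\mathrm{Re}(\tilde{a}\tilde{b}^{\ast})$ is a real scalar and the real part is invariant under conjugation and cyclic reordering, so each coefficient equals $2\,\mathrm{Re}(\tilde{x}_{i,\mathrm{st}}^{\ast}\tilde{x}_{i,\mathcal{I}})$. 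In the remaining case $\tilde{x}_{i,\mathrm{st}} = 0$, both $\hat{x}_{i}^{\ast}\hat{x}_{i}$ and $|\hat{x}_{i}|^{2}$ collapse to zero since $\epsilon^{2}=0$, so the scalar identity holds in every case.

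Summing yields $\langle\hat{\mathbf{x}},\hat{\mathbf{x}}\rangle = \sum_{i=1}^{n}|\hat{x}_{i}|^{2} = \|\hat{\mathbf{x}}\|_{2}^{2}$, where the second equality uses the hypothesis that $\hat{\mathbf{x}}$ is appreciable (so $\tilde{\mathbf{x}}_{\mathrm{st}}\neq\tilde{\mathbf{0}}$ and the appreciable branch of the norm applies). The common value is a positive appreciable dual number, hence its square root in the sense recalled earlier is well defined and is the unique nonnegative dual number whose square recovers it; since $\|\hat{\mathbf{x}}\|_{2}$ is itself a nonnegative dual number squaring to the same quantity, extracting square roots gives $\|\hat{\mathbf{x}}\|_{2} = \sqrt{\langle\hat{\mathbf{x}},\hat{\mathbf{x}}\rangle}$.

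The only mildly nontrivial step is matching the $\epsilon$-coefficients in the componentwise identity, where the noncommutativity of quaternion multiplication could in principle be a nuisance; this is handled cleanly by the observation that $\tilde{a}^{\ast}\tilde{b} + \tilde{b}^{\ast}\tilde{a}$ and $\tilde{a}\tilde{b}^{\ast} + \tilde{b}\tilde{a}^{\ast}$ are both $2\,\mathrm{Re}(\tilde{a}^{\ast}\tilde{b})$ via cyclic invariance of the real part. Apart from this, every step is a routine unpacking of the definitions given in Section~\ref{section2}.
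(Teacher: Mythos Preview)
Your proof is correct and follows essentially the same route as the paper: reduce to the componentwise identity $\hat{x}_{i}^{\ast}\hat{x}_{i}=|\hat{x}_{i}|^{2}$ by expanding both sides in standard/dual parts and then sum. If anything you are a bit more careful than the paper, since you explicitly justify why the two forms of the $\epsilon$-coefficient agree via $2\,\mathrm{Re}(\tilde{x}_{i,\mathrm{st}}^{\ast}\tilde{x}_{i,\mathcal{I}})$ and you separately treat components with $\tilde{x}_{i,\mathrm{st}}=0$, a case the paper's proof glosses over.
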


\begin{proof}
	Write $\hat{\mathbf{x}}=\tilde{\mathbf{x}}_{\mathrm{st}}+\tilde{\mathbf{x}}_{\mathcal{I}}\epsilon=(\hat{x}_{1},\cdots,\hat{x}_{n})^{\top}\in \mathbb{DQ}^{n}$. The dual quaternion-valued inner product $\langle\hat{\mathbf{x}}, \hat{\mathbf{x}}\rangle$ is defined by \begin{equation*}
		\langle\hat{\mathbf{x}}, \hat{\mathbf{x}}\rangle=\hat{\mathbf{x}}^{\ast}\hat{\mathbf{x}}=\sum_{i=1}^{n} \hat{x}_{i}^{*} \hat{x}_{i},
	\end{equation*}
	and the 2-norm of $\hat{\mathbf{x}}$ is defined by
	\begin{equation*}
		\|\hat{\mathbf{x}}\|_{2}=\sqrt{\sum_{i=1}^{n}\left|\hat{x}_{i}\right|^{2}}, ~ \text{if} ~ \tilde{\mathbf{x}}_{\mathrm{st}} \neq \tilde{\mathbf{0}}.
	\end{equation*}
	Let $\hat{x}_{i}=\tilde{x}_{i,\mathrm{st}}+\tilde{x}_{i,\mathcal{I}}\epsilon\in \mathbb{DQ}$, $\forall i=1,2,\cdots,n$. Then it is easily verified that
	\begin{equation*}
		\hat{x}_{i}^{*} \hat{x}_{i}=(\tilde{x}_{i,\mathrm{st}}^{\ast}+\tilde{x}_{i,\mathcal{I}}^{\ast}\epsilon)(\tilde{x}_{i,\mathrm{st}}+\tilde{x}_{i,\mathcal{I}}\epsilon)=\tilde{x}_{i,\mathrm{st}}^{\ast}\tilde{x}_{i,\mathrm{st}}+(\tilde{x}_{i,\mathrm{st}}^{\ast}\tilde{x}_{i,\mathcal{I}}+\tilde{x}_{i,\mathcal{I}}^{\ast}\tilde{x}_{i,\mathrm{st}})\epsilon,
	\end{equation*}
	and
	\begin{equation*}
		|\hat{x}_{i}|^{2}=\left(\left|\tilde{x}_{i,\mathrm{st}}\right|+\dfrac{\tilde{x}_{i,\mathrm{st}} \tilde{x}_{i,\mathcal{I}}^{\ast}+\tilde{x}_{i,\mathcal{I}} \tilde{x}_{i,\mathrm{st}}^{\ast}}{2\left|\tilde{x}_{i,\mathrm{st}}\right|} \epsilon\right)^{2}=|\tilde{x}_{i,\mathrm{st}}|^{2}+(\tilde{x}_{i,\mathrm{st}}^{\ast}\tilde{x}_{i,\mathcal{I}}+\tilde{x}_{i,\mathcal{I}}^{\ast}\tilde{x}_{i,\mathrm{st}})\epsilon=\hat{x}_{i}^{*} \hat{x}_{i}.
	\end{equation*}
	which implies that $\hat{x}_{i}^{*} \hat{x}_{i}=|\hat{x}_{i}|^{2}$. This complete the proof.
\end{proof}

\begin{lemma}\label{normsum}
	Given two dual quaternion vectors $\mathbf{x,y}\in \mathbb{DQ}^{n}$, the following properties hold:
	
	\noindent $(1)$ $\|\hat{\mathbf{x}}\|_{2}=\|\hat{\mathbf{x}}^{\ast}\|_{2}$;
	
	\noindent $(2)$ $\|\hat{\mathbf{x}}+\hat{\mathbf{y}}\|_{2}^{2}=\|\hat{\mathbf{x}}\|_{2}^{2}+\|\hat{\mathbf{y}}\|_{2}^{2}+\langle\hat{\mathbf{x}},\hat{\mathbf{y}}\rangle+\langle\hat{\mathbf{y}},\hat{\mathbf{x}}\rangle$
\end{lemma}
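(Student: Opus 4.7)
My plan is to prove the two identities by reducing each to a componentwise statement about the dual quaternion magnitude or to an algebraic expansion of the inner product, leaning heavily on Lemma \ref{induce} so that I can replace $\|\cdot\|_2^2$ by $\langle \cdot,\cdot\rangle$ and manipulate symbolically.

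For part (1), I would first observe that since $\hat{\mathbf{x}}^{\ast}$ has components $\hat{x}_i^{\ast}$, it suffices to prove $|\hat{x}_i^{\ast}| = |\hat{x}_i|$ for each $i$, after which the definition of $\|\cdot\|_2$ gives the claim by summation. For a single dual quaternion $\hat{q} = \tilde{q}_{\mathrm{st}} + \tilde{q}_{\mathcal{I}}\epsilon$, I would plug into the magnitude formula from Section 2.1. The standard parts trivially match because $|\tilde{q}_{\mathrm{st}}^{\ast}| = |\tilde{q}_{\mathrm{st}}|$ for quaternions. The dual parts $\tilde{q}_{\mathrm{st}}\tilde{q}_{\mathcal{I}}^{\ast} + \tilde{q}_{\mathcal{I}}\tilde{q}_{\mathrm{st}}^{\ast}$ and $\tilde{q}_{\mathrm{st}}^{\ast}\tilde{q}_{\mathcal{I}} + \tilde{q}_{\mathcal{I}}^{\ast}\tilde{q}_{\mathrm{st}}$ are each twice the real part of a single quaternion product, and since $\mathrm{Re}(\tilde{p}\tilde{q}) = \mathrm{Re}(\tilde{q}\tilde{p})$ for quaternions, they are equal. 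The infinitesimal case (where $\tilde{\mathbf{x}}_{\mathrm{st}} = \tilde{\mathbf{0}}$) is handled by noting that conjugation commutes with the piecewise definition and that $|\tilde{q}_{\mathcal{I}}^{\ast}| = |\tilde{q}_{\mathcal{I}}|$.

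For part (2), assuming first that $\hat{\mathbf{x}}$, $\hat{\mathbf{y}}$, and $\hat{\mathbf{x}}+\hat{\mathbf{y}}$ are all appreciable, Lemma \ref{induce} gives $\|\hat{\mathbf{x}}+\hat{\mathbf{y}}\|_2^2 = \langle \hat{\mathbf{x}}+\hat{\mathbf{y}}, \hat{\mathbf{x}}+\hat{\mathbf{y}}\rangle = (\hat{\mathbf{x}}+\hat{\mathbf{y}})^{\ast}(\hat{\mathbf{x}}+\hat{\mathbf{y}})$. Expanding, using associativity/distributivity of the matrix product and the identities $\hat{\mathbf{y}}^{\ast}\hat{\mathbf{x}} = \langle \hat{\mathbf{x}}, \hat{\mathbf{y}}\rangle$ and $\hat{\mathbf{x}}^{\ast}\hat{\mathbf{y}} = \langle \hat{\mathbf{y}}, \hat{\mathbf{x}}\rangle$, yields exactly the stated formula. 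To cover the degenerate cases where one or both vectors are infinitesimal, I would note that then $\epsilon^2 = 0$ forces $\hat{\mathbf{y}}^{\ast}\hat{\mathbf{y}} = 0$ (respectively $\hat{\mathbf{x}}^{\ast}\hat{\mathbf{x}} = 0$) and the piecewise $\|\cdot\|_2^2$ still evaluates to the purely-dual quantity $\sum|\tilde{x}_{i,\mathcal{I}}|^2\epsilon^2 = 0$, so both sides of the identity reduce to the same sum and the same cross terms.

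The main obstacle, and the step requiring some care, is the dual-part comparison in part (1): one must recognize that $\tilde{q}_{\mathrm{st}}\tilde{q}_{\mathcal{I}}^{\ast}$ and $\tilde{q}_{\mathcal{I}}^{\ast}\tilde{q}_{\mathrm{st}}$ are generally distinct quaternions, yet their real parts coincide. A secondary bookkeeping issue is the appreciability condition in Lemma \ref{induce}, which requires splitting into cases in part (2); however, each subcase collapses cleanly because of $\epsilon^2 = 0$, so this is more of a verification than a genuine difficulty.
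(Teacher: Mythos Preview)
Your proposal is correct, and for part~(1) you in fact supply more detail than the paper, which simply declares the statement ``obvious'' and omits a proof. Your componentwise argument using $\mathrm{Re}(\tilde p\tilde q)=\mathrm{Re}(\tilde q\tilde p)$ is the right mechanism.

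For part~(2), however, you take a genuinely different route from the paper. The paper works purely componentwise: it writes out $|\hat x_i+\hat y_i|^2$ explicitly from the dual-quaternion magnitude formula, writes out $|\hat x_i|^2$, $|\hat y_i|^2$, $\hat y_i^\ast\hat x_i$, $\hat x_i^\ast\hat y_i$ the same way, and verifies by direct inspection that the first equals the sum of the other four; summing over $i$ then gives the vector identity. Your approach instead invokes Lemma~\ref{induce} to replace $\|\hat{\mathbf x}+\hat{\mathbf y}\|_2^2$ by $(\hat{\mathbf x}+\hat{\mathbf y})^\ast(\hat{\mathbf x}+\hat{\mathbf y})$ and expands algebraically. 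Your argument is shorter and more conceptual, but it imports the appreciability hypothesis from Lemma~\ref{induce} and therefore needs the case split you describe (you should also mention the case where $\hat{\mathbf x}$ and $\hat{\mathbf y}$ are individually appreciable but $\hat{\mathbf x}+\hat{\mathbf y}$ is not; there both sides equal $(\hat{\mathbf x}+\hat{\mathbf y})^\ast(\hat{\mathbf x}+\hat{\mathbf y})=0$). The paper's brute-force approach avoids that split at the cost of more symbol-pushing.
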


\begin{proof}
	The property (1) is obvious and it is omitted here. Let
	\begin{equation*}
	\hat{\mathbf{x}}=(\hat{x}_{1},\cdots,\hat{x}_{n})^{\top}, \hat{\mathbf{y}}=(\hat{y}_{1},\cdots,\hat{y}_{n})^{\top}\in \mathbb{DQ}^{n \times 1},
	\end{equation*} 
	where
	$\hat{x}_{i}=\tilde{x}_{i,\mathrm{st}}+\tilde{x}_{i,\mathcal{I}}\epsilon$, 	$\hat{y}_{i}=\tilde{y}_{i,\mathrm{st}}+\tilde{y}_{i,\mathcal{I}}\epsilon \in \mathbb{DQ}$, then we have
	\begin{equation*}
		\begin{aligned}
			|\hat{x}_{i}+\hat{y}_{i}|^{2}&=\left(\left|\tilde{x}_{i,\mathrm{st}}+\tilde{y}_{i,\mathrm{st}}\right|+\dfrac{(\tilde{x}_{i,\mathrm{st}}+\tilde{y}_{i,\mathrm{st}}) (\tilde{x}_{i,\mathcal{I}}^{\ast}+\tilde{y}_{i,\mathcal{I}}^{\ast})+(\tilde{x}_{i,\mathcal{I}}+\tilde{y}_{i,\mathcal{I}}) (\tilde{x}_{i,\mathrm{st}}^{\ast}+\tilde{y}_{i,\mathrm{st}}^{\ast})}{2\left|\tilde{x}_{i,\mathrm{st}}+\tilde{y}_{i,\mathrm{st}}\right|} \epsilon\right)^{2}\\
			&=|\tilde{x}_{i,\mathrm{st}}+\tilde{y}_{i,\mathrm{st}}|^{2}+((\tilde{x}_{i,\mathrm{st}}+\tilde{y}_{i,\mathrm{st}}) (\tilde{x}_{i,\mathcal{I}}^{\ast}+\tilde{y}_{i,\mathcal{I}}^{\ast})+(\tilde{x}_{i,\mathcal{I}}+\tilde{y}_{i,\mathcal{I}}) (\tilde{x}_{i,\mathrm{st}}^{\ast}+\tilde{y}_{i,\mathrm{st}}^{\ast}))\epsilon \\
			&=(\tilde{x}_{i,\mathrm{st}}^{\ast}+\tilde{y}_{i,\mathrm{st}}^{\ast})(\tilde{x}_{i,\mathrm{st}}+\tilde{y}_{i,\mathrm{st}})+((\tilde{x}_{i,\mathrm{st}}+\tilde{y}_{i,\mathrm{st}}) (\tilde{x}_{i,\mathcal{I}}^{\ast}+\tilde{y}_{i,\mathcal{I}}^{\ast})+(\tilde{x}_{i,\mathcal{I}}+\tilde{y}_{i,\mathcal{I}}) (\tilde{x}_{i,\mathrm{st}}^{\ast}+\tilde{y}_{i,\mathrm{st}}^{\ast}))\epsilon \\
			&=|\tilde{x}_{i,\mathrm{st}}|^{2}+|\tilde{y}_{i,\mathrm{st}}|^{2}+\tilde{y}_{i,\mathrm{st}}^{\ast}\tilde{x}_{i,\mathrm{st}}+\tilde{x}_{i,\mathrm{st}}^{\ast}\tilde{y}_{i,\mathrm{st}}+((\tilde{x}_{i,\mathrm{st}}+\tilde{y}_{i,\mathrm{st}}) (\tilde{x}_{i,\mathcal{I}}^{\ast}+\tilde{y}_{i,\mathcal{I}}^{\ast})+(\tilde{x}_{i,\mathcal{I}}\\
			&+\tilde{y}_{i,\mathcal{I}}) (\tilde{x}_{i,\mathrm{st}}^{\ast}+\tilde{y}_{i,\mathrm{st}}^{\ast}))\epsilon
		\end{aligned}
	\end{equation*}
	and
	\begin{equation*}
		\begin{aligned}
			&|\hat{x}_{i}|^{2}=|\tilde{x}_{i,\mathrm{st}}|^{2}+(\tilde{x}_{i,\mathrm{st}}^{\ast}\tilde{x}_{i,\mathcal{I}}+\tilde{x}_{i,\mathcal{I}}^{\ast}\tilde{x}_{i,\mathrm{st}})\epsilon, \\
			& |\hat{y}_{i}|^{2}=|\tilde{y}_{i,\mathrm{st}}|^{2}+(\tilde{y}_{i,\mathrm{st}}^{\ast}\tilde{y}_{i,\mathcal{I}}+\tilde{y}_{i,\mathcal{I}}^{\ast}\tilde{y}_{i,\mathrm{st}})\epsilon, \\
			&\hat{y}_{i}^{\ast}\hat{x}_{i}=\tilde{y}_{i,\mathrm{st}}^{\ast}\tilde{x}_{i,\mathrm{st}}+(\tilde{y}_{i,\mathrm{st}}^{\ast}\tilde{x}_{i,\mathcal{I}}+\tilde{y}_{i,\mathcal{I}}^{\ast}\tilde{x}_{i,\mathrm{st}})\epsilon, \\
			&\hat{x}_{i}^{\ast}\hat{y}_{i}=\tilde{x}_{i,\mathrm{st}}^{\ast}\tilde{y}_{i,\mathrm{st}}+(\tilde{x}_{i,\mathrm{st}}^{\ast}\tilde{y}_{i,\mathcal{I}}+\tilde{x}_{i,\mathcal{I}}^{\ast}\tilde{y}_{i,\mathrm{st}})\epsilon.
		\end{aligned}
	\end{equation*}
	It is easily verified that
	\begin{equation*}
		|\hat{x}_{i}+\hat{y}_{i}|^{2}=|\hat{x}_{i}|^{2}+|\hat{y}_{i}|^{2}+\hat{y}_{i}^{\ast}\hat{x}_{i}+\hat{x}_{i}^{\ast}\hat{y}_{i}.
	\end{equation*}
	This completes the proof.
\end{proof}

For convenience in the numerical experiments, the $2^{R}$-norm of a dual quaternion vector $\hat{\mathbf{x}} \in \mathbb{DQ}^{n \times 1}$ is defined as
$$
\|\hat{\mathbf{x}}\|_{2^{R}}=\sqrt{\left\|\tilde{\mathbf{x}}_{\mathrm{st}}\right\|_{2}^{2}+\left\|\tilde{\mathbf{x}}_{\mathcal{I}}\right\|_{2}^{2}},
$$
and the $F^{R}$-norm of a dual quaternion matrix $\hat{A} \in \mathbb{DQ}^{m \times n}$ is defined as
\begin{equation*}
\|\hat{A}\|_{F^{R}}=\sqrt{\|\tilde{A}_{\mathrm{st}}\|_{F}^{2}+\|\tilde{A}_{\mathcal{I}}\|_{F}^{2}}.
\end{equation*}
The $2^{R}$-norm and $F^{R}$-norm here are not norms since they do not satisfy the scaling condition of norms. For more details, please refer to \cite{cui2023power}.

For a given dual quaternion matrix $\hat{A} \in \mathbb{DQ}^{n \times n}$. If there are $\hat{\lambda} \in \mathbb{DQ}$, $\hat{\mathbf{x}} \in \mathbb{DQ}^{n\times 1}$, where $\hat{\mathbf{x}}$ is appreciable, such that
$$
\hat{A} \hat{\mathbf{x}}=\hat{\lambda}\hat{\mathbf{x}},
$$
then we say that $\hat{\lambda}$ is a left eigenvalue of $A$, with $\hat{\mathbf{x}}$ as an associated left eigenvector. If
\begin{equation*}
	\hat{A} \hat{\mathbf{x}}=\hat{\mathbf{x}} \hat{\lambda},
\end{equation*}
then we say that $\hat{\lambda}$ is a right eigenvalue of $A$, with $\hat{\mathbf{x}}$ as an associated right eigenvector. Based on Theorem 4.2 in \cite{qi2021eigenvalues}, a dual quaternion Hermitian matrix $\hat{A} \in \mathbb{DQ}^{n \times n}$ has exactly $n$ eigenvalues (with associated $n$ eigenvectors), which are dual numbers.

\section{Dual Representation of a Dual Quaternion Matrix}

\noindent In this section, we put forward a dual matrix isomorphic representation form of dual quaternion matrices.

\begin{theorem}\label{theorepre}
	For any dual quaternion matrix $\hat{A}=\tilde{A}_{\mathrm{st}}+\tilde{A}_{\mathcal{I}}\epsilon\in \mathbb{DQ}^{m \times n}$, where $\tilde{A}_{\mathrm{st}}=A_{1}+A_{2}\mathbf{i}+A_{3}\mathbf{j}+A_{4}\mathbf{k}, \tilde{A}_{\mathcal{I}}=\bar{A}_{1}+\bar{A}_{2}\mathbf{i}+\bar{A}_{3}\mathbf{j}+\bar{A}_{4}\mathbf{k} \in \mathbb{Q}^{m \times n}$ and $A_{i}, \bar{A}_{i} \in \mathbb{R}^{m \times n},~ i=1,2,3,4$. Define the map
		\begin{align}
			\sigma : \ & \mathbb{DQ}^{m\times n} \rightarrow \mathbb{D}^{4m\times 4n}, \nonumber \\
			& \hat{A} \mapsto \hat{A}^{\sigma}=\begin{bmatrix}
				A_{1}+\bar{A}_{1}\epsilon & A_{3}+\bar{A}_{3}\epsilon & A_{2}+\bar{A}_{2}\epsilon & A_{4}+\bar{A}_{4}\epsilon\\
				-A_{3}-\bar{A}_{3}\epsilon & A_{1}+\bar{A}_{1}\epsilon & A_{4}+\bar{A}_{4}\epsilon & -A_{2}-\bar{A}_{2}\epsilon\\
				-A_{2}-\bar{A}_{2}\epsilon & -A_{4}-\bar{A}_{4}\epsilon & A_{1}+\bar{A}_{1}\epsilon & A_{3}+\bar{A}_{3}\epsilon\\
				-A_{4}-\bar{A}_{4}\epsilon & A_{2}+\bar{A}_{2}\epsilon & -A_{3}-\bar{A}_{3}\epsilon & A_{1}+\bar{A}_{1}\epsilon
			\end{bmatrix}. \label{dual representation1}
		\end{align}
		The map $\sigma$ is an isomorphic mapping from $\mathbb{DQ}^{m \times n}$ to $\mathbb{D}^{4m \times 4n}$.
\end{theorem}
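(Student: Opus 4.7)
The plan is to verify that $\sigma$ is an algebra isomorphism onto its image by checking, in order, (i) well-definedness into $\mathbb{D}^{4m\times 4n}$, (ii) $\mathbb{R}$-linearity, (iii) injectivity, and (iv) multiplicativity $\sigma(\hat{A}\hat{B}) = \sigma(\hat{A})\sigma(\hat{B})$ whenever the product makes sense. Well-definedness is immediate because each entry of the displayed block matrix has the shape $\pm(A_i + \bar{A}_i\epsilon)$ and so lies in $\mathbb{D}$. Linearity follows at once from the observation that the eight block positions depend linearly on the real matrices $A_1,\dots,A_4,\bar{A}_1,\dots,\bar{A}_4$, and these in turn depend linearly on $\hat{A}$. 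Injectivity is also immediate: if $\hat{A}^{\sigma}=0$, then inspecting the $(1,1)$ block gives $A_1+\bar{A}_1\epsilon=0$, hence $A_1=\bar{A}_1=0$, and the other first-row blocks successively force $A_j=\bar{A}_j=0$ for $j=2,3,4$, so $\hat{A}=\hat{\mathbf{O}}$.

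The substantive step, and the main obstacle, is multiplicativity. I would reduce it to the purely quaternionic case as follows. Define the restriction $\phi : \mathbb{Q}^{m\times n}\to\mathbb{R}^{4m\times 4n}$ by applying the same block formula with $\bar{A}_1=\cdots=\bar{A}_4=0$. Then directly from the block formula,
\[
\sigma(\hat{A}) \;=\; \phi(\tilde{A}_{\mathrm{st}}) + \phi(\tilde{A}_{\mathcal{I}})\,\epsilon.
\]
Using the dual quaternion product rule $\hat{A}\hat{B}=\tilde{A}_{\mathrm{st}}\tilde{B}_{\mathrm{st}}+(\tilde{A}_{\mathrm{st}}\tilde{B}_{\mathcal{I}}+\tilde{A}_{\mathcal{I}}\tilde{B}_{\mathrm{st}})\epsilon$ together with $\epsilon^2=0$, a short calculation reduces $\sigma(\hat{A}\hat{B}) = \sigma(\hat{A})\sigma(\hat{B})$ to the single identity
\[
\phi(\tilde{A}\tilde{B}) \;=\; \phi(\tilde{A})\,\phi(\tilde{B})\qquad\text{for all }\tilde{A}\in\mathbb{Q}^{m\times p},\ \tilde{B}\in\mathbb{Q}^{p\times n}.
\]

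Thus the whole proof hinges on showing that $\phi$ is multiplicative on quaternion matrices. The block formula for $\phi$ is, up to a permutation of the basis $\{1,\mathbf{i},\mathbf{j},\mathbf{k}\}$, the standard left-regular representation of $\mathbb{Q}$ on itself as a $4$-dimensional real vector space, so by linearity and the entrywise structure of matrix multiplication it suffices to verify multiplicativity on the four basis quaternions: namely $\phi(\mathbf{i})^2=\phi(\mathbf{j})^2=\phi(\mathbf{k})^2=-\mathbf{I}_4$ and the cyclic products $\phi(\mathbf{i})\phi(\mathbf{j})=\phi(\mathbf{k})$, $\phi(\mathbf{j})\phi(\mathbf{k})=\phi(\mathbf{i})$, $\phi(\mathbf{k})\phi(\mathbf{i})=\phi(\mathbf{j})$. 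These are six routine $4\times 4$ matrix checks. Once they are in hand, multiplicativity on general quaternion matrices follows because the $(i,j)$ block of $\phi(\tilde{A})\phi(\tilde{B})$ is $\sum_k \phi(\tilde{a}_{ik})\phi(\tilde{b}_{kj}) = \phi\!\left(\sum_k \tilde{a}_{ik}\tilde{b}_{kj}\right) = \phi\!\left((\tilde{A}\tilde{B})_{ij}\right)$.

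Combining linearity, injectivity, and multiplicativity yields that $\sigma$ is an algebra isomorphism from $\mathbb{DQ}^{m\times n}$ onto its image in $\mathbb{D}^{4m\times 4n}$, completing the proof. The bookkeeping for the basis-quaternion identities is the only nontrivial labor; everything else is block-structural and algebraic.
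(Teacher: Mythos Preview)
Your proof is correct and takes a genuinely different route from the paper's. The paper proceeds by brute force: it expands $\hat{A}\hat{C}$ into its standard and dual quaternionic parts, writes out all the real products arising from $(A_1+A_2\mathbf{i}+A_3\mathbf{j}+A_4\mathbf{k})(C_1+C_2\mathbf{i}+C_3\mathbf{j}+C_4\mathbf{k})$ and the corresponding cross terms with the barred matrices, verifies by inspection that the $(1,1)$ block of $(\hat{A}\hat{C})^{\sigma}$ matches the $(1,1)$ block of $\hat{A}^{\sigma}\hat{C}^{\sigma}$, and then appeals to ``similarly'' for the remaining fifteen blocks. Your approach instead factors $\sigma(\hat{A})=\phi(\tilde{A}_{\mathrm{st}})+\phi(\tilde{A}_{\mathcal{I}})\epsilon$ to reduce everything to the pure quaternion map $\phi$, and then reduces multiplicativity of $\phi$ on matrices to multiplicativity on the basis units $1,\mathbf{i},\mathbf{j},\mathbf{k}$---a handful of explicit $4\times 4$ identities in place of sixteen $m\times n$ block computations. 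This buys conceptual clarity, explains \emph{why} the representation works (it is the left-regular representation of $\mathbb{Q}$), and avoids the unverified ``similarly''. One small point to tighten: the block structure in the theorem partitions the $4m\times 4n$ matrix as a $4\times 4$ grid of $m\times n$ blocks, not as an $m\times n$ grid of $4\times 4$ blocks, so your final line about ``the $(i,j)$ block being $\sum_k\phi(\tilde{a}_{ik})\phi(\tilde{b}_{kj})$'' is not literally correct as written. The cleanest fix is to note that $\phi(\tilde{A})=\sum_{s=1}^{4}\phi(e_s)\otimes A_s$ as a Kronecker sum (with $e_1=1$, $e_2=\mathbf{i}$, $e_3=\mathbf{j}$, $e_4=\mathbf{k}$), whereupon $(E\otimes A)(F\otimes B)=(EF)\otimes(AB)$ reduces matrix multiplicativity directly to the scalar relations you check.
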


\begin{proof}
		It is easy to know that $\sigma$ is a one-to-one mapping from $\mathbb{DQ}^{m \times n}$ to $\mathbb{D}^{4m \times 4n}$. Then, for any dual quaternion matrix $\hat{C}=\tilde{C}_{\mathrm{st}}+\tilde{C}_{\mathcal{I}\epsilon} \in \mathbb{DQ}^{n \times s}$, where $\tilde{C}_{\mathrm{st}}=C_{1}+C_{2}\mathbf{i}+C_{3}\mathbf{j}+C_{4}\mathbf{k},\tilde{C}_{\mathcal{I}}=\bar{C}_{1}+\bar{C}_{2}\mathbf{i}+\bar{C}_{3}\mathbf{j}+\bar{C}_{4}\mathbf{k} \in \mathbb{Q}^{n \times s}$, we have
		\begin{equation}\label{lemma1}
			\hat{A}\hat{C}=(\tilde{A}_{\mathrm{st}}+\tilde{A}_{\mathcal{I}}\epsilon)(\tilde{C}_{\mathrm{st}}+\tilde{C}_{\mathcal{I}}\epsilon)=\tilde{A}_{\mathrm{st}}\tilde{C}_{\mathrm{st}}+(\tilde{A}_{\mathrm{st}}\tilde{C}_{\mathcal{I}}+\tilde{A}_{\mathcal{I}}\tilde{C}_{\mathrm{st}})\epsilon.
		\end{equation}
		On the one hand, consider the standard part of the equality (\ref{lemma1}), we have
		\begin{align*}
			\tilde{A}_{\mathrm{st}}\tilde{C}_{\mathrm{st}}&=(A_{1}+A_{2}\mathbf{i}+A_{3}\mathbf{j}+A_{4}\mathbf{k})(C_{1}+C_{2}\mathbf{i}+C_{3}\mathbf{j}+C_{4}\mathbf{k}) \\
			&=A_{1}C_{1}-A_{2}C_{2}-A_{3}C_{3}-A_{4}C_{4}+(A_{1}C_{2}+A_{2}C_{1}+A_{3}C_{4}-A_{4}C_{3})\mathbf{i} \\
			& + (A_{1}C_{3}-A_{2}C_{4}+A_{3}C_{1}+A_{4}C_{2})\mathbf{j}+(A_{1}C_{4}+A_{2}C_{3}-A_{3}C_{2}+A_{4}C_{1})\mathbf{k},
		\end{align*}
		and consider the dual part of the equality (\ref{lemma1}), we obtain
		\begin{align*}
			\tilde{A}_{\mathrm{st}}\tilde{C}_{\mathcal{I}}+\tilde{A}_{\mathcal{I}}\tilde{C}_{\mathrm{st}}&=(A_{1}+A_{2}\mathbf{i}+A_{3}\mathbf{j}+A_{4}\mathbf{k})(\bar{C}_{1}+\bar{C}_{2}\mathbf{i}+\bar{C}_{3}\mathbf{j}+\bar{C}_{4}\mathbf{k}) \\
			&+(\bar{A}_{1}+\bar{A}_{2}\mathbf{i}+\bar{A}_{3}\mathbf{j}+\bar{A}_{4}\mathbf{k})(C_{1}+C_{2}\mathbf{i}+C_{3}\mathbf{j}+C_{4}\mathbf{k}) \\
			&=A_{1}\bar{C}_{1}-A_{2}\bar{C}_{2}-A_{3}\bar{C}_{3}-A_{4}\bar{C}_{4}+(A_{1}\bar{C}_{2}+A_{2}\bar{C}_{1}+A_{3}\bar{C}_{4}-A_{4}\bar{C}_{3})\mathbf{i} \\
			& + (A_{1}\bar{C}_{3}-A_{2}\bar{C}_{4}+A_{3}\bar{C}_{1}+A_{4}\bar{C}_{2})\mathbf{j}+(A_{1}\bar{C}_{4}+A_{2}\bar{C}_{3}-A_{3}\bar{C}_{2}+A_{4}\bar{C}_{1})\mathbf{k} \\
			&+\bar{A}_{1}C_{1}-\bar{A}_{2}C_{2}-\bar{A}_{3}C_{3}-\bar{A}_{4}C_{4}+(\bar{A}_{1}C_{2}+\bar{A}_{2}C_{1}+\bar{A}_{3}C_{4}-\bar{A}_{4}C_{3})\mathbf{i} \\
			& + (\bar{A}_{1}C_{3}-\bar{A}_{2}C_{4}+\bar{A}_{3}C_{1}+\bar{A}_{4}C_{2})\mathbf{j}+(\bar{A}_{1}C_{4}+\bar{A}_{2}C_{3}-\bar{A}_{3}C_{2}+\bar{A}_{4}C_{1})\mathbf{k} \\
			&=A_{1}\bar{C}_{1}-A_{2}\bar{C}_{2}-A_{3}\bar{C}_{3}-A_{4}\bar{C}_{4}+\bar{A}_{1}C_{1}-\bar{A}_{2}C_{2}-\bar{A}_{3}C_{3}-\bar{A}_{4}C_{4} \\
			& + (A_{1}\bar{C}_{2}+A_{2}\bar{C}_{1}+A_{3}\bar{C}_{4}-A_{4}\bar{C}_{3}+\bar{A}_{1}C_{2}+\bar{A}_{2}C_{1}+\bar{A}_{3}C_{4}-\bar{A}_{4}C_{3})\mathbf{i} \\
			&+(A_{1}\bar{C}_{3}-A_{2}\bar{C}_{4}+A_{3}\bar{C}_{1}+A_{4}\bar{C}_{2}+\bar{A}_{1}C_{3}-\bar{A}_{2}C_{4}+\bar{A}_{3}C_{1}+\bar{A}_{4}C_{2})\mathbf{j} \\
			& + (A_{1}\bar{C}_{4}+A_{2}\bar{C}_{3}-A_{3}\bar{C}_{2}+A_{4}\bar{C}_{1}+\bar{A}_{1}C_{4}+\bar{A}_{2}C_{3}-\bar{A}_{3}C_{2}+\bar{A}_{4}C_{1})\mathbf{k},
		\end{align*}
		which implies that $(\hat{A}\hat{C})_{[1,1]}^{\sigma}=A_{1}C_{1}-A_{2}C_{2}-A_{3}C_{3}-A_{4}C_{4}+(A_{1}\bar{C}_{1}-A_{2}\bar{C}_{2}-A_{3}\bar{C}_{3}-A_{4}\bar{C}_{4}+\bar{A}_{1}C_{1}-\bar{A}_{2}C_{2}-\bar{A}_{3}C_{3}-\bar{A}_{4}C_{4})\epsilon$, where the symbols $(\hat{A}^{\sigma})_{[i,j]},i,j=1,2,3,4$ denote the block matrix at the position of row $i$ and column $j$ of $\hat{A}^{\sigma}$. On the other hand, it is evident that
		\begin{equation}\label{dual representation2}
			\hat{C}^{\sigma}=
			\begin{bmatrix}
				C_{1}+\bar{C}_{1}\epsilon & C_{3}+\bar{C}_{3}\epsilon & C_{2}+\bar{C}_{2}\epsilon & C_{4}+\bar{C}_{4}\epsilon\\
				-C_{3}-\bar{C}_{3}\epsilon & C_{1}+\bar{C}_{1}\epsilon & C_{4}+\bar{C}_{4}\epsilon & -C_{2}-\bar{C}_{2}\epsilon\\
				-C_{2}-\bar{C}_{2}\epsilon & -C_{4}-\bar{C}_{4}\epsilon & C_{1}+\bar{C}_{1}\epsilon & C_{3}+\bar{C}_{3}\epsilon\\
				-C_{4}-\bar{C}_{4}\epsilon & C_{2}+\bar{C}_{2}\epsilon & -C_{3}-\bar{C}_{3}\epsilon & C_{1}+\bar{C}_{1}\epsilon
			\end{bmatrix} \in \mathbb{D}^{4n \times 4s},
		\end{equation}
		from the formulas \eqref{dual representation1} and \eqref{dual representation2}, we get $(\hat{A}^{\sigma}\hat{C}^{\sigma})_{[1,1]}=A_{1}C_{1}-A_{2}C_{2}-A_{3}C_{3}-A_{4}C_{4}+(A_{1}\bar{C}_{1}+\bar{A}_{1}C_{1}-A_{2}\bar{C}_{2}-\bar{A}_{2}C_{2}-A_{3}\bar{C}_{3}-\bar{A}_{3}C_{3}-A_{4}\bar{C}_{4}-\bar{A}_{4}C_{4})\epsilon$, then $(\hat{A}\hat{C})_{[1,1]}^{\sigma}=(\hat{A}^{\sigma}\hat{C}^{\sigma})_{[1,1]}$. Similarity, we can prove $(\hat{A}\hat{C})_{[i,j]}^{\sigma}=(\hat{A}^{\sigma}\hat{C}^{\sigma})_{[i,j]},i,j=1,2,3,4$, thus
		$$(\hat{A}\hat{C})^{\sigma}=\hat{A}^{\sigma}\hat{C}^{\sigma}.$$
		Additionally, it is obvious that $(\hat{A}+\hat{B})^{\sigma}=\hat{A}^{\sigma}+\hat{B}^{\sigma}$ holds for any dual quaternion matrix $\hat{B}=\tilde{B}_{\mathrm{st}}+\tilde{B}_{\mathcal{I}}\in \mathbb{DQ}^{m \times n}$. Therefore, we can conclude that $\sigma$ is an isomorphic mapping from $\mathbb{DQ}^{m \times n}$ to $\mathbb{D}^{4m \times 4n}$.
	\end{proof}

For matrix representations of dual quaternions, one may also refer to \cite{demir2007matrix}. In the sequel, we list some of the properties of $\hat{A}^{\sigma}$ as follows.

\begin{lemma} \label{dualre}
Let $\hat{A},\hat{B}\in \mathbb{DQ}^{m \times n}$, $\hat{C} \in \mathbb{DQ}^{n \times s}$ and $a \in \mathbb{D}$. Then the following properties hold.

\noindent $(1)$ $\hat{A}=\hat{B} \Longleftrightarrow \hat{A}^{\sigma}=\hat{B}^{\sigma}$.

\noindent $(2)$ $(\hat{A}+\hat{B})^{\sigma}=\hat{A}^{\sigma}+\hat{B}^{\sigma}$, $(a\hat{A})^{\sigma}=a\hat{A}^{\sigma}$.

\noindent $(3)$ $(\hat{A}\hat{C})^{\sigma}=\hat{A}^{\sigma}\hat{C}^{\sigma}$.

\end{lemma}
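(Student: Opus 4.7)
The plan is to derive all three properties directly from the explicit block formula \eqref{dual representation1} together with what has already been established in Theorem \ref{theorepre}. Property (3) is in fact the multiplicative identity that was proved during Theorem \ref{theorepre} by a block-by-block comparison; since that verification is complete, nothing new is needed here beyond invoking it.

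For Property (1), I would observe that the map $\sigma$ stores the eight real coefficient matrices $A_1,A_2,A_3,A_4,\bar A_1,\bar A_2,\bar A_3,\bar A_4$ in the four diagonal-type dual entries of $\hat A^\sigma$ (for instance $A_1+\bar A_1\epsilon$ sits in the $(1,1)$ block, $A_3+\bar A_3\epsilon$ in the $(1,2)$ block, and so on). Consequently, $\hat A^\sigma=\hat B^\sigma$ forces all eight real coefficient matrices of $\hat A$ and $\hat B$ to coincide, which means $\tilde A_{\mathrm{st}}=\tilde B_{\mathrm{st}}$ and $\tilde A_{\mathcal I}=\tilde B_{\mathcal I}$, hence $\hat A=\hat B$. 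The reverse implication is immediate from the well-definedness of $\sigma$. This is precisely the one-to-one claim noted at the beginning of the proof of Theorem \ref{theorepre}.

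For Property (2), additivity $(\hat A+\hat B)^\sigma=\hat A^\sigma+\hat B^\sigma$ follows because each of the sixteen blocks of \eqref{dual representation1} is $\mathbb{R}$-linear in the pair $(A_i,\bar A_i)$, so replacing $\hat A$ by $\hat A+\hat B$ just adds the corresponding coefficient matrices of $\hat B$ block-wise; this was already remarked at the end of the proof of Theorem \ref{theorepre}. For $(a\hat A)^\sigma=a\hat A^\sigma$ with $a=a_{\mathrm{st}}+a_{\mathcal I}\epsilon\in\mathbb D$, the key point is that $a$ commutes with the quaternion units $\mathbf i,\mathbf j,\mathbf k$ (since $a_{\mathrm{st}},a_{\mathcal I}\in\mathbb R$), so expanding gives $a\hat A=a_{\mathrm{st}}\tilde A_{\mathrm{st}}+(a_{\mathrm{st}}\tilde A_{\mathcal I}+a_{\mathcal I}\tilde A_{\mathrm{st}})\epsilon$ and hence the real coefficient matrices of $a\hat A$ are $a_{\mathrm{st}}A_i$ in the standard part and $a_{\mathrm{st}}\bar A_i+a_{\mathcal I}A_i$ in the dual part. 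Plugging these into \eqref{dual representation1} reproduces exactly the block matrix obtained by multiplying $\hat A^\sigma$ entry-wise by $a$, since $a(A_i+\bar A_i\epsilon)=a_{\mathrm{st}}A_i+(a_{\mathrm{st}}\bar A_i+a_{\mathcal I}A_i)\epsilon$.

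No step here presents a real obstacle: everything reduces to comparing explicit block expressions, and the only mild bookkeeping is making sure the dual-number scalar $a$ propagates correctly through both the standard and dual parts in the verification of $(a\hat A)^\sigma=a\hat A^\sigma$. In short, the three properties are respectively the injectivity, $\mathbb D$-linearity, and multiplicativity components of the isomorphism already established in Theorem \ref{theorepre}, and they can be collected into a single short proof that simply cites that theorem for (3) and reads off (1) and (2) from the entries of \eqref{dual representation1}.
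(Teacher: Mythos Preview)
Your proposal is correct and mirrors the paper's own treatment: the paper states Lemma~\ref{dualre} without a separate proof, relying entirely on Theorem~\ref{theorepre}, whose proof already establishes injectivity (your Property~(1)), additivity (explicitly noted there), and multiplicativity (the main computation), leaving only the easy $\mathbb D$-scalar compatibility to be read off from~\eqref{dual representation1} as you do.
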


Let $\hat{A}_{c}^{\sigma}$ denote the first column block of $A^{\sigma}$, that is
\begin{equation}\label{column}
\hat{A}_{c}^{\sigma}=
\begin{bmatrix}
	A_{1}+\bar{A}_{1}\epsilon \\
	-A_{3}-\bar{A}_{3}\epsilon \\
	-A_{2}-\bar{A}_{2}\epsilon \\
	-A_{4}-\bar{A}_{4}\epsilon 
\end{bmatrix} \in \mathbb{D}^{4m \times n}.
\end{equation}
From this notation and Lemma \ref{dualre}, we have the following results.

\begin{lemma} \label{dualre_lemma}
Let $\hat{A},\hat{B}\in \mathbb{DQ}^{m \times n}$, $\hat{C} \in \mathbb{DQ}^{n \times s}$ and $a \in \mathbb{D}$. Then the following properties hold.

\noindent $(1)$ $(\hat{A}+\hat{B})_{c}^{\sigma}=\hat{A}_{c}^{\sigma}+\hat{B}_{c}^{\sigma}$.

\noindent $(2)$ $(a\hat{A})_{c}^{\sigma}=a\hat{A}_{c}^{\sigma}$.

\noindent $(3)$ $(\hat{A}\hat{C})_{c}^{\sigma}=\hat{A}^{\sigma}\hat{C}_{c}^{\sigma}$.
\end{lemma}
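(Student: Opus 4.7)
The plan is to reduce all three parts of the lemma directly to the corresponding parts of Lemma \ref{dualre} by observing that $\hat{A}_c^{\sigma}$ is literally the first column block of $\hat{A}^{\sigma}$, so any identity among $\sigma$-images that holds block-entrywise restricts trivially to its first column block.

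For part (1), I would take the equality $(\hat{A}+\hat{B})^{\sigma} = \hat{A}^{\sigma} + \hat{B}^{\sigma}$ from Lemma \ref{dualre}(2) and simply extract the first column block of both sides; since block-matrix addition is performed block-entrywise, the first column block of the left side is $(\hat{A}+\hat{B})_c^{\sigma}$ and the first column block of the right side is $\hat{A}_c^{\sigma}+\hat{B}_c^{\sigma}$. Part (2) is handled in the same way starting from $(a\hat{A})^{\sigma} = a\hat{A}^{\sigma}$, noting that scalar multiplication by $a\in\mathbb{D}$ commutes with taking a column block.

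Part (3) is the only step that requires a small observation beyond a direct quotation of Lemma \ref{dualre}. The key point is the structural fact, immediate from the definition \eqref{dual representation1}, that the first column block of $\hat{C}^{\sigma} \in \mathbb{D}^{4n\times 4s}$ is exactly $\hat{C}_c^{\sigma}\in \mathbb{D}^{4n\times s}$. Therefore, in the block product $\hat{A}^{\sigma}\hat{C}^{\sigma}$, the first column block of the result is obtained by multiplying $\hat{A}^{\sigma}$ by the first column block of $\hat{C}^{\sigma}$, i.e. it equals $\hat{A}^{\sigma}\hat{C}_c^{\sigma}$. Combining this with the identity $(\hat{A}\hat{C})^{\sigma}=\hat{A}^{\sigma}\hat{C}^{\sigma}$ from Lemma \ref{dualre}(3) and extracting the first column block on the left yields $(\hat{A}\hat{C})_c^{\sigma}=\hat{A}^{\sigma}\hat{C}_c^{\sigma}$.

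There is no serious obstacle here; the entire lemma is a corollary of Lemma \ref{dualre} together with the block-structural observation for part (3). The only item worth writing out carefully is the statement that column-block extraction commutes with block-matrix addition, scalar multiplication, and multiplication on the right, so that no further entrywise computation (of the sort carried out in the proof of Theorem \ref{theorepre}) is needed.
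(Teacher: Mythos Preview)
Your proposal is correct and essentially matches the paper's approach: the paper's proof is even terser, dispatching (1) and (2) without comment and for (3) simply pointing back to the proof of Theorem~\ref{theorepre}, where the block-by-block identity $(\hat{A}\hat{C})^{\sigma}=\hat{A}^{\sigma}\hat{C}^{\sigma}$ is verified. Your argument via column-block extraction from Lemma~\ref{dualre} is a clean way to package exactly the same content.
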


\begin{proof}
We only prove the property $(3)$. This property can be obtained from the proof of Theorem \ref{theorepre}.
\end{proof}

From (\ref{column}) and Lemma \ref{dualre_lemma}, $\hat{A}_{c}^{\sigma}$ contains all the information of $\hat{A}^{\sigma}$. Algorithms, constructed by $\hat{A}^{\sigma}$ instead of $\hat{A}^{\sigma}$, have higher computational and storage efficiency, this is we called the structure-preserving strategy. The algorithms based on the real or complex structure-preserving strategy can be also found in \cite{jia2013new,yu2024new}.

\section{Rayleigh Quotient Iteration for Computing the Eigenvalue of a Dual Quaternion Hermitian Matrix}\label{section3}

\noindent In this section, we study the Rayleigh quotient iteration (RQI) for computing the eigenvalue with associated eigenvector of a dual quaternion Hermitian matrix.

The following theorem gives the form of the eigenvalues of a nonsingular dual quaternion matrix.

\begin{theorem}\label{inverse_eigenvalue}
Suppose that $\hat{A} \in \mathbb{DQ}^{n \times n}$ is a non-singular Hermitian matrix, $\lambda=\lambda_{\mathrm{st}}+\lambda_{\mathcal{I}}\epsilon \in \mathbb{D}$ is an appreciable right eigenvalue of $A$, then $\lambda^{-1}=1 / \lambda_{\mathrm{st}}-\lambda_{\mathcal{I}}\epsilon / \lambda_{\mathrm{st}}^{2}$ is a right eigenvalue of $\hat{A}^{-1}$.
\end{theorem}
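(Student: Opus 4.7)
The plan is to start directly from the right eigenpair relation $\hat{A}\hat{\mathbf{x}}=\hat{\mathbf{x}}\lambda$, where $\hat{\mathbf{x}}\in\mathbb{DQ}^{n}$ is the (appreciable) associated right eigenvector, and manipulate it into the form $\hat{A}^{-1}\hat{\mathbf{x}}=\hat{\mathbf{x}}\lambda^{-1}$ with the same eigenvector. The two ingredients I need are the invertibility of $\hat{A}$ (given) and the invertibility of $\lambda$ in $\mathbb{D}$, which follows from the assumption that $\lambda$ is appreciable, i.e., $\lambda_{\mathrm{st}}\neq 0$.

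First I would invoke the dual-number inverse formula recalled in Section 2: for an appreciable $\lambda=\lambda_{\mathrm{st}}+\lambda_{\mathcal{I}}\epsilon$, one has
\[
\lambda^{-1}=\lambda_{\mathrm{st}}^{-1}-\lambda_{\mathrm{st}}^{-1}\lambda_{\mathcal{I}}\lambda_{\mathrm{st}}^{-1}\epsilon=\frac{1}{\lambda_{\mathrm{st}}}-\frac{\lambda_{\mathcal{I}}}{\lambda_{\mathrm{st}}^{2}}\epsilon,
\]
so the candidate expression in the theorem statement is well-defined and satisfies $\lambda\lambda^{-1}=\lambda^{-1}\lambda=1$. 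Next, left-multiplying the eigenvalue equation by $\hat{A}^{-1}$ yields $\hat{\mathbf{x}}=\hat{A}^{-1}\hat{\mathbf{x}}\lambda$. Then right-multiplying by $\lambda^{-1}$ gives
\[
\hat{\mathbf{x}}\lambda^{-1}=\hat{A}^{-1}\hat{\mathbf{x}}\lambda\lambda^{-1}=\hat{A}^{-1}\hat{\mathbf{x}},
\]
which is exactly the defining relation identifying $\lambda^{-1}$ as a right eigenvalue of $\hat{A}^{-1}$ with eigenvector $\hat{\mathbf{x}}$. Since $\hat{\mathbf{x}}$ is unchanged, the appreciability required in the definition of a right eigenvector is automatic.

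The only subtlety — and the step that requires the most care — is the legitimacy of the scalar-side manipulation $\hat{\mathbf{x}}\lambda\lambda^{-1}=\hat{\mathbf{x}}(\lambda\lambda^{-1})=\hat{\mathbf{x}}$. Because dual-quaternion multiplication is associative (it inherits associativity from quaternion multiplication, with $\epsilon$ acting as a central scalar that commutes with every entry), and because $\lambda\in\mathbb{D}$ is central with respect to scalar multiplication on $\hat{\mathbf{x}}$, the regrouping is valid. I would therefore record one short remark observing that $\lambda$ and $\lambda^{-1}$, being dual numbers, are central, so the right-action regrouping causes no issue even though $\hat{A}$ and $\hat{\mathbf{x}}$ have quaternion entries. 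No use is made of the Hermitian hypothesis beyond what is needed to ensure that right eigenvalues of $\hat{A}$ are genuinely dual numbers (as recalled from Theorem 4.2 of \cite{qi2021eigenvalues}); the same argument in fact works for any invertible $\hat{A}$ with an appreciable dual-number right eigenvalue.
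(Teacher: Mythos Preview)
Your argument is correct, but it differs from the paper's proof. The paper invokes the unitary decomposition of a dual quaternion Hermitian matrix (Theorem~4.1 of \cite{qi2021eigenvalues}): writing $\hat{A}=\hat{U}\Sigma\hat{U}^{\ast}$ with $\Sigma=\operatorname{diag}(\lambda_{1},\ldots,\lambda_{n})$, it then computes $\hat{A}^{-1}=\hat{U}\Sigma^{-1}\hat{U}^{\ast}$ and reads off all eigenvalues $\lambda_{i}^{-1}$ of $\hat{A}^{-1}$ simultaneously. By contrast, you work directly from a single eigenpair relation $\hat{A}\hat{\mathbf{x}}=\hat{\mathbf{x}}\lambda$, left-multiply by $\hat{A}^{-1}$ and right-multiply by $\lambda^{-1}$, using only associativity and the centrality of dual-number scalars. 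Your route is more elementary and more general (it does not need the Hermitian hypothesis or the unitary decomposition, and it exhibits the same eigenvector explicitly), while the paper's route gives the stronger global statement that the full spectrum of $\hat{A}^{-1}$ is $\{\lambda_{1}^{-1},\ldots,\lambda_{n}^{-1}\}$ in one stroke.
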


\begin{proof}
By Theorem 4.1 in \cite{qi2021eigenvalues},  there exists a unitary matrix $\hat{U} \in \mathbb{DQ}^{n \times n}$ and a diagonal matrix $\Sigma \in \mathbb{D}^{n \times n}$  such that
\begin{equation*}
	\hat{A}=\hat{U} \Sigma \hat{U}^{*},
\end{equation*}
where
$\Sigma=\operatorname{diag}\left(\lambda_{1}, \ldots, \lambda_{n}\right) \in \mathbb{D}^{n \times n}$ is a diagonal dual matrix, and $\lambda_{i}=\lambda_{i}+\lambda_{i,\mathcal{I}}\epsilon, i=1, \ldots, n$ are all appreciable eigenvalues of $\hat{A}$. Then we have 
\begin{equation*}
	\begin{aligned}
		\hat{A}^{-1} & = \left(\hat{U} \Sigma \hat{U}^{*}\right)^{-1}=\hat{U} \Sigma^{-1} \hat{U}^{*}\\
		& = \hat{U} \operatorname{diag}\left(\lambda_{1}^{-1}, \lambda_{2}^{-1}, \ldots, \lambda_{n}^{-1}\right) \hat{U}^{*}\\
		& = \hat{U} \operatorname{diag}\left(\frac{1}{\lambda_{1}}-\frac{\lambda_{1,\mathcal{I}}}{\lambda_{1}^{2}} \epsilon, \frac{1}{\lambda_{2}}-\frac{\lambda_{2,\mathcal{I}}}{\lambda_{2}^{2}} \epsilon, \ldots, \frac{1}{\lambda_{n}}-\frac{\lambda_{n,\mathcal{I}}}{\lambda_{n}^{2}} \epsilon\right) \hat{U}^{*},
	\end{aligned}
\end{equation*}
which implies that $\lambda_{i}^{-1}=\dfrac{1}{\lambda_{i}}-\dfrac{\lambda_{i,\mathcal{I}}}{\lambda_{i}^{2}} \epsilon, i=1, \ldots, n$ are the right eigenvalues of $\hat{A}^{-1}$. 
\end{proof}

Next, we review an important proposition in \cite{qi2021eigenvalues}.

\begin{proposition}[\cite{qi2021eigenvalues}]\label{Rayleigh}
Suppose that $\hat{\lambda} \in \mathbb{DQ}$  is a right eigenvalue of $\hat{A} \in \mathbb{DQ}^{n \times n}$, with associated right eigenvector  $\hat{\mathbf{x}} \in \mathbb{DQ}^{n\times 1}$. Then
\begin{equation*}
	\hat{\lambda}=\dfrac{\hat{\mathbf{x}}^{\ast} \hat{A} \hat{\mathbf{x}}}{\hat{\mathbf{x}}^{\ast} \hat{\mathbf{x}}}.
\end{equation*}
\end{proposition}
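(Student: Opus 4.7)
The plan is to start directly from the right eigenvalue equation $\hat{A}\hat{\mathbf{x}} = \hat{\mathbf{x}}\hat{\lambda}$ and left-multiply by $\hat{\mathbf{x}}^{\ast}$ to obtain
\begin{equation*}
\hat{\mathbf{x}}^{\ast}\hat{A}\hat{\mathbf{x}} \;=\; \hat{\mathbf{x}}^{\ast}\hat{\mathbf{x}}\,\hat{\lambda}.
\end{equation*}
The whole proof then reduces to showing that the scalar factor $\hat{\mathbf{x}}^{\ast}\hat{\mathbf{x}}$ on the right can be moved to the other side of $\hat{\lambda}$ and inverted without damage.

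Next I would invoke the earlier observations about the inner product. By Lemma \ref{induce} and the surrounding discussion, $\hat{\mathbf{x}}^{\ast}\hat{\mathbf{x}} = \langle\hat{\mathbf{x}},\hat{\mathbf{x}}\rangle$ is a nonnegative dual number, and since an eigenvector is assumed appreciable (as stipulated in the definition of right eigenvalue given just before the proposition), it is in fact a positive appreciable dual number. Two facts follow immediately: first, $\hat{\mathbf{x}}^{\ast}\hat{\mathbf{x}}\in \mathbb{D}$ lies in the center of $\mathbb{DQ}$, so it commutes with $\hat{\lambda}$; second, being appreciable, it is invertible inside $\mathbb{D}$, with explicit inverse given by the formula for the reciprocal of a dual number recalled in Section 2.

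Combining these two observations I can rewrite the displayed identity as $\hat{\lambda}\,\hat{\mathbf{x}}^{\ast}\hat{\mathbf{x}} = \hat{\mathbf{x}}^{\ast}\hat{A}\hat{\mathbf{x}}$ and multiply on the right by $(\hat{\mathbf{x}}^{\ast}\hat{\mathbf{x}})^{-1}$, yielding the Rayleigh quotient representation
\begin{equation*}
\hat{\lambda} \;=\; \frac{\hat{\mathbf{x}}^{\ast}\hat{A}\hat{\mathbf{x}}}{\hat{\mathbf{x}}^{\ast}\hat{\mathbf{x}}},
\end{equation*}
where the quotient notation is justified by the definition of division by an appreciable dual number recalled in Section 2. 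I do not anticipate a substantive obstacle; the only point that needs care is to justify that $\hat{\mathbf{x}}^{\ast}\hat{\mathbf{x}}$ is both central (so that it can slide past the noncommutative factor $\hat{\lambda}\in\mathbb{DQ}$) and appreciable (so that inversion is well defined), both of which follow from results already collected in Section 2.
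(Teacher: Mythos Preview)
Your argument is correct. Note, however, that the paper does not actually supply a proof of this proposition: it is merely quoted from \cite{qi2021eigenvalues} as a known result, so there is no ``paper's own proof'' to compare against. Your derivation is the natural one and handles the only nontrivial point---that $\hat{\mathbf{x}}^{\ast}\hat{\mathbf{x}}$ is an appreciable dual number, hence central in $\mathbb{DQ}$ and invertible---exactly as it should be.
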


Assume that the unit dual quaternion vector $\hat{\mathbf{u}}_{k}$ is a reasonably approximation to $\hat{\mathbf{x}}$, then $\theta_{k}=\hat{\mathbf{u}}_{k}^{\ast}\hat{A}\hat{\mathbf{u}}_{k}$ is a good approximation to $\hat{\lambda}$ too.

\begin{theorem}[Unitary Decomposition \cite{qi2021eigenvalues}]\label{unitary}
	Suppose that $\hat{A}=\tilde{A}_{s t}+\tilde{A}_{\mathcal{I}}\epsilon \in \mathbb{DQ}^{n \times n}$ is a Hermitian matrix. Then there are unitary matrix $\hat{U} \in \mathbb{DQ}^{n \times n}$ and a diagonal matrix $\Sigma \in \mathbb{D}^{n \times n}$ such that $\Sigma=\hat{U}^{\ast} \hat{A} \hat{U}$, where
	\begin{equation*}
		\Sigma = \operatorname{diag}\left(\lambda_{1,\mathrm{st}}+\lambda_{1,\mathcal{I}_{1}} \epsilon, \cdots, \lambda_{1,\mathrm{st}}+\lambda_{1, \mathcal{I}_{k_{1}}} \epsilon, \lambda_{2,\mathrm{st}}+\lambda_{2,\mathcal{I}_{1}} \epsilon, \cdots, \lambda_{r,\mathrm{st}}+\lambda_{r, \mathcal{I}_{k_{r}}} \epsilon\right),
	\end{equation*}
	with the diagonal entries of $\Sigma$ being $n$ eigenvalues of $\hat{A}$,
	\begin{equation*}
		\hat{A} \hat{\mathbf{u}}_{i, j}=\hat{\mathbf{u}}_{i, j}\left(\lambda_{i,\mathrm{st}}+\lambda_{i,\mathcal{I}_{j}} \epsilon\right),
	\end{equation*}
	for $j=1, \cdots, k_{i}$ and $i=1, \cdots, r$, $\hat{U}=\left(\hat{\mathbf{u}}_{1,1}, \cdots, \hat{\mathbf{u}}_{1, k_{1}}, \cdots, \hat{\mathbf{u}}_{r, k_{r}}\right)$, $\lambda_{1,\mathrm{st}}>\lambda_{2,\mathrm{st}}>\cdots>\lambda_{r,\mathrm{st}}$  are real numbers, $\lambda_{i,\mathrm{st}}$ is a $k_{i}$-multiple right eigenvalue of $\tilde{A}_{\mathrm{st}}$, $\lambda_{i, \mathcal{I}_{1}} \geq \lambda_{i,\mathcal{I}_{2}} \geq \cdots \geq \lambda_{i, \mathcal{I}_{k_{i}}}$ are also real numbers. Counting possible multiplicities $\lambda_{i,\mathcal{I}_{j}}$, the form $\Sigma$ is unique.
\end{theorem}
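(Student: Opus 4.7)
The plan is to build the unitary matrix $\hat{U}$ in two stages: first invoking the classical spectral theorem on the standard part $\tilde{A}_{\mathrm{st}}$, then using the remaining infinitesimal freedom to diagonalize the dual part. Since $\hat{A}$ is dual quaternion Hermitian, both $\tilde{A}_{\mathrm{st}}$ and $\tilde{A}_{\mathcal{I}}$ are quaternion Hermitian. The standard spectral theorem for quaternion Hermitian matrices produces a unitary $\tilde{V} \in \mathbb{Q}^{n \times n}$ with $\tilde{V}^{\ast}\tilde{A}_{\mathrm{st}}\tilde{V} = \Lambda_{\mathrm{st}} = \operatorname{diag}(\lambda_{1,\mathrm{st}}\tilde{\mathbf{I}}_{k_{1}}, \ldots, \lambda_{r,\mathrm{st}}\tilde{\mathbf{I}}_{k_{r}})$, where $\lambda_{1,\mathrm{st}} > \cdots > \lambda_{r,\mathrm{st}}$ are the distinct real eigenvalues of $\tilde{A}_{\mathrm{st}}$ with multiplicities $k_{1}, \ldots, k_{r}$.

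Next, I would parametrize dual unitary candidates as $\hat{U} = \tilde{V}(\tilde{\mathbf{I}} + \tilde{Z}\epsilon)$. Because $\epsilon^{2}=0$, the condition $\hat{U}^{\ast}\hat{U} = \hat{\mathbf{I}}$ collapses to $\tilde{Z}^{\ast} + \tilde{Z} = 0$, i.e. $\tilde{Z}$ is anti-Hermitian, while a direct expansion gives $\hat{U}^{\ast}\hat{A}\hat{U} = \Lambda_{\mathrm{st}} + \bigl(\tilde{B} + \Lambda_{\mathrm{st}}\tilde{Z} - \tilde{Z}\Lambda_{\mathrm{st}}\bigr)\epsilon$, where $\tilde{B} := \tilde{V}^{\ast}\tilde{A}_{\mathcal{I}}\tilde{V}$ is quaternion Hermitian. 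Partitioning $\tilde{Z}$ and $\tilde{B}$ conformally with the block pattern $(k_{1}, \ldots, k_{r})$, the off-diagonal block equation $(\lambda_{i,\mathrm{st}} - \lambda_{j,\mathrm{st}})\tilde{Z}_{ij} = -\tilde{B}_{ij}$ for $i\neq j$ is uniquely solvable since the $\lambda_{\ell,\mathrm{st}}$ are distinct, and the resulting blocks automatically satisfy the anti-Hermitian relation $\tilde{Z}_{ji} = -\tilde{Z}_{ij}^{\ast}$ by virtue of $\tilde{B}_{ji}=\tilde{B}_{ij}^{\ast}$. Setting the free diagonal blocks $\tilde{Z}_{ii}$ to zero reduces the infinitesimal part of $\hat{U}^{\ast}\hat{A}\hat{U}$ to the block-diagonal matrix $\operatorname{diag}(\tilde{B}_{11}, \ldots, \tilde{B}_{rr})$.

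To finish, each $k_{i}\times k_{i}$ quaternion Hermitian block $\tilde{B}_{ii}$ admits a unitary quaternion diagonalization $\tilde{W}_{i}^{\ast}\tilde{B}_{ii}\tilde{W}_{i} = \operatorname{diag}(\lambda_{i,\mathcal{I}_{1}}, \ldots, \lambda_{i,\mathcal{I}_{k_{i}}})$, whose real eigenvalues we arrange in nonincreasing order to match the theorem. Assembling $\hat{W} = \operatorname{diag}(\tilde{W}_{1}, \ldots, \tilde{W}_{r}) + \tilde{\mathbf{O}}\epsilon$ and replacing $\hat{U}$ by $\hat{U}\hat{W}$ preserves dual unitarity, and because $\hat{W}$ is block-diagonal with the same block pattern as $\Lambda_{\mathrm{st}}$ it commutes with $\Lambda_{\mathrm{st}}$, so the already-diagonal standard part is unchanged; the columns are the required eigenvectors $\hat{\mathbf{u}}_{i,j}$. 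For uniqueness of $\Sigma$, the standard parts are forced by the spectrum of $\tilde{A}_{\mathrm{st}}$, and on each $\lambda_{i,\mathrm{st}}$-eigenspace the infinitesimal multiset coincides with the spectrum of the compression of $\tilde{A}_{\mathcal{I}}$ to that eigenspace, which is a similarity invariant.

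The main obstacle I anticipate is the careful bookkeeping of quaternion noncommutativity when expanding $\hat{U}^{\ast}\hat{A}\hat{U}$ and solving for $\tilde{Z}$: one must verify that the anti-Hermitian constraint is consistent with the commutator-type equation block by block, a consistency that hinges on the Hermiticity of $\tilde{B}$. A secondary but nontrivial point is the uniqueness claim, which requires showing that the infinitesimal spectrum of each diagonal block does not depend on the choice of orthonormal quaternion basis for the corresponding eigenspace of $\tilde{A}_{\mathrm{st}}$; this follows from an invariance-under-$\tilde{W}_{i}$-similarity argument applied to the compression $\tilde{B}_{ii}$.
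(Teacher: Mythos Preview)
The paper does not prove this theorem at all; it is quoted as a known result from Qi and Luo \cite{qi2021eigenvalues} and used without argument. So there is no ``paper's own proof'' to compare against.

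That said, your two-stage construction is correct and is essentially the standard route to such a result. Diagonalizing $\tilde{A}_{\mathrm{st}}$ by the quaternion spectral theorem, parametrizing the infinitesimal correction as $\hat{U}=\tilde{V}(\tilde{\mathbf{I}}+\tilde{Z}\epsilon)$ with $\tilde{Z}$ anti-Hermitian, and solving the block Sylvester equation $(\lambda_{i,\mathrm{st}}-\lambda_{j,\mathrm{st}})\tilde{Z}_{ij}=-\tilde{B}_{ij}$ for $i\neq j$ all go through exactly as you describe; the fact that the $\lambda_{\ell,\mathrm{st}}$ are \emph{real} is what makes the block commutator $(\Lambda_{\mathrm{st}}\tilde{Z}-\tilde{Z}\Lambda_{\mathrm{st}})_{ij}=(\lambda_{i,\mathrm{st}}-\lambda_{j,\mathrm{st}})\tilde{Z}_{ij}$ behave well despite quaternion noncommutativity, so the obstacle you anticipate does not materialize. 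The final block-diagonal conjugation by $\hat{W}$ commutes with $\Lambda_{\mathrm{st}}$ for the same reason. Your uniqueness sketch is also the right idea: any two quaternion-unitary diagonalizers of $\tilde{A}_{\mathrm{st}}$ with the prescribed ordering differ by a block-diagonal unitary $\tilde{Q}=\operatorname{diag}(\tilde{Q}_{1},\ldots,\tilde{Q}_{r})$, whence the compressed blocks $\tilde{B}_{ii}$ change by the similarity $\tilde{Q}_{i}^{\ast}\tilde{B}_{ii}\tilde{Q}_{i}$ and their spectra are preserved.
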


Below, we define the Rayleigh quotient for Hermitian matrices and study its fundamental properties.

\begin{definition}
	If $\hat{A}$ is a dual quaternion Hermitian matrix, then the Rayleigh Quotient $\hat{\theta}$ is the function which assigns to any appreciable dual quaternion vector $\hat{\mathbf{v}}$, i.e.
	\begin{equation*}
		\theta(\hat{A},\hat{\mathbf{x}})=\dfrac{\hat{\mathbf{v}}^{\ast} \hat{A} \hat{\mathbf{v}}}{\hat{\mathbf{v}}^{\ast} \hat{\mathbf{v}}}.
	\end{equation*}
\end{definition}

\begin{proposition}
	$\theta(\hat{A},\hat{\mathbf{v}})=\theta(\hat{A},\hat{\mathbf{v}}\hat{k})$, where $\hat{\mathbf{v}}$ is an appreciable dual quaternion vector and $\hat{k}$ is an appreciable dual quaternion.
\end{proposition}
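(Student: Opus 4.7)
The plan is to reduce the claim to a cancellation inside the commutative algebra of dual numbers, by pulling $\hat{k}$ past the Hermitian sandwich. First I would write
$$
\theta(\hat{A},\hat{\mathbf{v}}\hat{k}) \;=\; \frac{(\hat{\mathbf{v}}\hat{k})^{\ast}\hat{A}(\hat{\mathbf{v}}\hat{k})}{(\hat{\mathbf{v}}\hat{k})^{\ast}(\hat{\mathbf{v}}\hat{k})} \;=\; \frac{\hat{k}^{\ast}(\hat{\mathbf{v}}^{\ast}\hat{A}\hat{\mathbf{v}})\hat{k}}{\hat{k}^{\ast}(\hat{\mathbf{v}}^{\ast}\hat{\mathbf{v}})\hat{k}},
$$
using the conjugate-transpose identity $(\hat{\mathbf{v}}\hat{k})^{\ast} = \hat{k}^{\ast}\hat{\mathbf{v}}^{\ast}$ established in Section~\ref{section2}. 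Set $\hat{\alpha} := \hat{\mathbf{v}}^{\ast}\hat{A}\hat{\mathbf{v}}$ and $\hat{\beta} := \hat{\mathbf{v}}^{\ast}\hat{\mathbf{v}}$; by the remark immediately preceding the definition of positive definiteness, both $\hat{\alpha}$ and $\hat{\beta}$ are \emph{dual numbers}, and by the property of $\langle\cdot,\cdot\rangle$ recalled just before Lemma~\ref{induce}, the appreciability of $\hat{\mathbf{v}}$ forces $\hat{\beta}$ to be a positive (hence appreciable) dual number.

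Next I would invoke the fact, stated when $\mathbb{D}$ was introduced, that $\epsilon$ commutes in multiplication with real numbers and quaternions; therefore every dual number commutes with every dual quaternion. Applying this to $\hat{\alpha}$ and $\hat{\beta}$, I can slide each past $\hat{k}$, obtaining
$$
\hat{k}^{\ast}\hat{\alpha}\hat{k} = \hat{\alpha}\,\hat{k}^{\ast}\hat{k}, \qquad \hat{k}^{\ast}\hat{\beta}\hat{k} = \hat{\beta}\,\hat{k}^{\ast}\hat{k}.
$$
Now $\hat{\gamma} := \hat{k}^{\ast}\hat{k} = |\hat{k}|^{2}$ is again a dual number, and because $\hat{k}$ is assumed appreciable, $\hat{\gamma}$ is appreciable with positive standard part, hence invertible in $\mathbb{D}$.

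Finally, inside the commutative algebra $\mathbb{D}$ the desired cancellation is immediate:
$$
\theta(\hat{A},\hat{\mathbf{v}}\hat{k}) \;=\; \frac{\hat{\alpha}\hat{\gamma}}{\hat{\beta}\hat{\gamma}} \;=\; \hat{\alpha}\hat{\gamma}(\hat{\beta}\hat{\gamma})^{-1} \;=\; \hat{\alpha}\hat{\gamma}\hat{\gamma}^{-1}\hat{\beta}^{-1} \;=\; \hat{\alpha}\hat{\beta}^{-1} \;=\; \theta(\hat{A},\hat{\mathbf{v}}),
$$
using that $\hat{\beta}\hat{\gamma}$ is a product of two appreciable dual numbers and is therefore itself appreciable, so the quotient is unambiguous. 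The only genuine subtlety is bookkeeping the appreciability hypotheses: without $\hat{\mathbf{v}}$ appreciable, $\hat{\beta}$ might be infinitesimal and the Rayleigh quotient would be undefined, and without $\hat{k}$ appreciable, $\hat{\gamma}$ would not be invertible so the cancellation step would fail. Since both hypotheses are given, the argument goes through.
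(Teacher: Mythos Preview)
Your proof is correct and follows essentially the same route as the paper's: expand the quotient using $(\hat{\mathbf{v}}\hat{k})^{\ast}=\hat{k}^{\ast}\hat{\mathbf{v}}^{\ast}$, commute the dual-number scalars $\hat{\mathbf{v}}^{\ast}\hat{A}\hat{\mathbf{v}}$ and $\hat{\mathbf{v}}^{\ast}\hat{\mathbf{v}}$ past $\hat{k}$, and cancel $\hat{k}^{\ast}\hat{k}$. Your version is more explicit than the paper's one-line display about why commutation and cancellation are legitimate (namely, that the sandwiched quantities lie in $\mathbb{D}$ and that the appreciability hypotheses guarantee invertibility), which is a welcome clarification rather than a different argument.
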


\begin{proof}
	\begin{equation*}
		\theta(\hat{A},\hat{\mathbf{v}}\hat{k})=\dfrac{(\hat{\mathbf{v}}\hat{k})^{\ast} \hat{A} (\hat{\mathbf{v}}\hat{k})}{(\hat{\mathbf{v}}\hat{k})^{\ast} (\hat{\mathbf{v}}\hat{k})}=\dfrac{\hat{k}^{\ast}\hat{\mathbf{v}}^{\ast} \hat{A} \hat{\mathbf{v}}\hat{k}}{\hat{k}^{\ast}\hat{\mathbf{v}}^{\ast} \hat{\mathbf{v}}\hat{k}}=\dfrac{\hat{k}^{\ast}\hat{k}\hat{\mathbf{x}}^{\ast} \hat{A} \hat{\mathbf{v}}}{\hat{k}^{\ast}\hat{k}\hat{\mathbf{v}}^{\ast} \hat{\mathbf{v}}}=\dfrac{\hat{\mathbf{v}}^{\ast} \hat{A} \hat{\mathbf{v}}}{\hat{\mathbf{v}}^{\ast} \hat{\mathbf{v}}}=\theta(\hat{A},\hat{\mathbf{v}}).
	\end{equation*}
\end{proof}

If $\hat{\mathbf{v}} \in \mathbb{DQ}^{n}$ is a nonzero vector, we can write it in coordinates with respect to the orthogonal basis $\left\{\hat{\mathbf{u}}_{j}\right\}_{j=1, \ldots, n}$ as
\begin{equation*}
	\hat{\mathbf{v}}=\sum_{j=1}^{n} \hat{\mathbf{u}}_{j} \hat{x}_{j}, \quad \hat{x}_{j} \in \mathbb{DQ}.
\end{equation*}

\begin{proposition}\label{mean} 
	Let $\hat{A}\in \mathbb{DQ}^{n\times n}$, where $\hat{A}$ is Hermitian. Then the Rayleigh quotient equals the weighted mean
	\begin{equation*}
		\theta(\hat{A},\hat{\mathbf{v}})=\frac{\sum_{j} \lambda_{j}\hat{x}_{j}^{\ast} \hat{x}_{j}}{\sum_{j}\hat{x}_{j}^{\ast} \hat{x}_{j}},
	\end{equation*}
	where the dual numbers $\lambda_{j}$ are the right eigenvalues of the dual quaternion Hermitian matrix $\hat{A}$.
\end{proposition}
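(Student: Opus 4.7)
The plan is to substitute the eigenbasis expansion of $\hat{\mathbf{v}}$ into the definition of the Rayleigh quotient and then exploit two facts in turn: the right-eigenvalue relation from the unitary decomposition of Theorem \ref{unitary}, and the orthonormality of the basis $\{\hat{\mathbf{u}}_j\}$ which collapses the double sum into a single one. First I would write
\[
\hat{\mathbf{v}}=\sum_{j=1}^{n}\hat{\mathbf{u}}_j\hat{x}_j,\qquad \hat{\mathbf{v}}^{\ast}=\sum_{i=1}^{n}\hat{x}_i^{\ast}\hat{\mathbf{u}}_i^{\ast},
\]
and then compute $\hat{A}\hat{\mathbf{v}}$ by distributing the matrix-vector product and using associativity with scalar multiplication on the right: $\hat{A}\hat{\mathbf{v}}=\sum_j (\hat{A}\hat{\mathbf{u}}_j)\hat{x}_j=\sum_j\hat{\mathbf{u}}_j\lambda_j\hat{x}_j$, where $\hat{A}\hat{\mathbf{u}}_j=\hat{\mathbf{u}}_j\lambda_j$ holds by Theorem \ref{unitary}.

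Next I would form the quadratic form $\hat{\mathbf{v}}^{\ast}\hat{A}\hat{\mathbf{v}}=\sum_{i,j}\hat{x}_i^{\ast}(\hat{\mathbf{u}}_i^{\ast}\hat{\mathbf{u}}_j)\lambda_j\hat{x}_j$ and invoke the orthonormality $\hat{\mathbf{u}}_i^{\ast}\hat{\mathbf{u}}_j=\delta_{ij}$ supplied by the unitarity of $\hat{U}$ in Theorem \ref{unitary}. This kills the off-diagonal terms and leaves $\hat{\mathbf{v}}^{\ast}\hat{A}\hat{\mathbf{v}}=\sum_j\hat{x}_j^{\ast}\lambda_j\hat{x}_j$. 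The denominator is handled identically, yielding $\hat{\mathbf{v}}^{\ast}\hat{\mathbf{v}}=\sum_j\hat{x}_j^{\ast}\hat{x}_j$, and dividing produces the claimed ratio up to the position of $\lambda_j$.

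The one step that deserves care is pulling $\lambda_j$ past $\hat{x}_j^{\ast}$ to recover the precise form $\sum_j \lambda_j\hat{x}_j^{\ast}\hat{x}_j$ written in the statement. Here I would use that $\hat{A}$ is Hermitian, so by Theorem \ref{unitary} each $\lambda_j=\lambda_{j,\mathrm{st}}+\lambda_{j,\mathcal{I}}\epsilon$ is a dual number. Dual numbers have real components and $\epsilon$ commutes with every quaternion, so $\lambda_j$ lies in the centre of $\mathbb{DQ}$ and commutes with the dual quaternion $\hat{x}_j^{\ast}\hat{x}_j$ (in fact with $\hat{x}_j^{\ast}$ already). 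Thus $\hat{x}_j^{\ast}\lambda_j\hat{x}_j=\lambda_j\hat{x}_j^{\ast}\hat{x}_j$, giving the stated weighted mean. This commutativity observation is the main (and really only) obstacle; everything else is bookkeeping with sums, and the denominator $\sum_j\hat{x}_j^{\ast}\hat{x}_j$ is appreciable precisely because $\hat{\mathbf{v}}$ is appreciable, so the quotient is well defined.
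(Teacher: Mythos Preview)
Your proposal is correct and follows essentially the same route as the paper: expand $\hat{\mathbf{v}}$ in the orthonormal eigenbasis supplied by Theorem~\ref{unitary}, use $\hat{A}\hat{\mathbf{u}}_j=\hat{\mathbf{u}}_j\lambda_j$ together with $\hat{\mathbf{u}}_i^{\ast}\hat{\mathbf{u}}_j=\delta_{ij}$ to reduce both numerator and denominator to single sums, and then slide the dual number $\lambda_j$ to the left. The paper's proof is identical in substance; your explicit remark that $\lambda_j$ is a dual number and hence central in $\mathbb{DQ}$ is a clarification the paper leaves implicit.
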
 

\begin{proof}
	According to Theorem \ref{unitary}, there exists a unitary matrix $U=\left(\hat{\mathbf{u}}_{1},\hat{\mathbf{u}}_{2},\cdots,\hat{\mathbf{u}}_{n}\right)$ such that
	\begin{equation*}
		\hat{A}\hat{U}=\hat{U}\mathrm{diag}\left(\lambda_{1},\lambda_{2},\cdots,\lambda_{n}\right), 
	\end{equation*}
	where $\hat{\mathbf{u}}_{1},\hat{\mathbf{u}}_{2},\cdots,\hat{\mathbf{u}}_{n}$ constitute an orthogonal basis of $\mathbb{DQ}^{n}$. Since $\hat{\mathbf{u}}_{i}^{\ast} \hat{\mathbf{u}}_{j}=\delta_{ij}$, we have
	\begin{equation*}
		\hat{\mathbf{v}}^{\ast} \hat{\mathbf{v}}=\left(\sum_{i} \hat{x}_{i}^{\ast} \hat{\mathbf{u}}_{i}^{\ast}\right)\left(\sum_{j} \hat{\mathbf{u}}_{j} \hat{x}_{j}\right)=\sum_{i, j} \hat{x}_{i}^{\ast} \hat{\mathbf{u}}_{i}^{\ast} \hat{\mathbf{u}}_{j} \hat{x}_{j}=\sum_{j} \hat{x}_{j}^{\ast} \hat{x}_{j}.
	\end{equation*}
	Analogously, since $\hat{A} \hat{\mathbf{u}}_{j}=\hat{\mathbf{u}}_{j} \lambda_{j}$, then we have
	\begin{equation*}
		\hat{\mathbf{v}}^{\ast} \hat{A} \hat{\mathbf{v}}=\left(\sum_{i} \hat{x}_{i}^{\ast} \hat{\mathbf{u}}_{i}^{\ast}\right)\left(\sum_{j} \hat{\mathbf{u}}_{j} \lambda_{j} \hat{x}_{j}\right)=\sum_{j} \hat{x}_{i}^{\ast} \lambda_{j} \hat{x}_{j}=\sum_{j} \lambda_{j}\hat{x}_{i}^{\ast} \hat{x}_{j}.
	\end{equation*}
This completes the proof.
\end{proof}

\begin{proposition}\label{maxmin}
	Let $\hat{A}\in \mathbb{DQ}^{n\times n}$, where $\hat{A}$ is Hermitian. If $\lambda_{1},\lambda_{2},\cdots,\lambda_{n}$ are generalized eigenvalues of $(A,B)$ with $\lambda_{1}\leq\lambda_{2}\leq \cdots \lambda_{n}$, then
	\begin{equation*}
		\lambda_{1} \leq \theta(\hat{A},\hat{\mathbf{v}}) \leq \lambda_{n},
	\end{equation*}
where $\mathbf{v}$ is an appreciable dual quaternion vector.
\end{proposition}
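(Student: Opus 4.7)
The plan is to derive the bound directly from Proposition \ref{mean}, which expresses $\theta(\hat{A},\hat{\mathbf{v}})$ as a weighted average of the eigenvalues $\lambda_j$ with weights $w_j = \hat{x}_j^{\ast}\hat{x}_j = |\hat{x}_j|^2$ in the coordinate decomposition $\hat{\mathbf{v}} = \sum_j \hat{\mathbf{u}}_j \hat{x}_j$. Since each $w_j$ is a nonnegative dual number and, because $\hat{\mathbf{v}}$ is appreciable, at least one $\hat{x}_j$ is appreciable, so the denominator $W := \sum_j w_j$ is a positive appreciable dual number (by Lemma \ref{induce}, $W = \|\hat{\mathbf{v}}\|_2^2$). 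This is the key structural observation: a weighted mean with nonnegative weights of quantities bounded between $\lambda_1$ and $\lambda_n$ should be pinched between $\lambda_1$ and $\lambda_n$.

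The main technical step I would carry out is to verify that the familiar real‐number chain of inequalities still works in the dual number total order introduced in Section 2. Concretely, I would first show that if $a \leq b$ in $\mathbb{D}$ and $c \geq 0$ in $\mathbb{D}$, then $ac \leq bc$; this reduces to examining $(b-a)c$ using the product formula $pq = p_{\mathrm{st}}q_{\mathrm{st}} + (p_{\mathrm{st}}q_{\mathcal{I}} + p_{\mathcal{I}}q_{\mathrm{st}})\epsilon$ and checking the two cases $c_{\mathrm{st}} > 0$ and $c_{\mathrm{st}} = 0, c_{\mathcal{I}} \geq 0$ against the lexicographic order. Summation then preserves $\leq$ termwise, and division by the appreciable positive $W$ preserves $\leq$ because $W^{-1} = W_{\mathrm{st}}^{-1} - W_{\mathrm{st}}^{-2}W_{\mathcal{I}}\epsilon$ has positive standard part, hence $W^{-1} > 0$.

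Once these monotonicity properties are established, the proof assembles in one line: from $\lambda_1 \leq \lambda_j \leq \lambda_n$ and $w_j \geq 0$, multiply to obtain $\lambda_1 w_j \leq \lambda_j w_j \leq \lambda_n w_j$, sum over $j$ to get
\begin{equation*}
\lambda_1 W \;\leq\; \sum_j \lambda_j w_j \;\leq\; \lambda_n W,
\end{equation*}
and then multiply through by the positive appreciable $W^{-1}$ to deduce
\begin{equation*}
\lambda_1 \;\leq\; \theta(\hat{A},\hat{\mathbf{v}}) \;=\; \frac{\sum_j \lambda_j w_j}{W} \;\leq\; \lambda_n.
\end{equation*}

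The main obstacle is the careful handling of the dual number order: since the order is lexicographic, "nonnegative" weights can have vanishing standard part ($w_{j,\mathrm{st}} = 0$ with $w_{j,\mathcal{I}} \geq 0$), and one must ensure that such infinitesimal weights do not destroy the inequality. Appreciability of $\hat{\mathbf{v}}$ is precisely what guarantees that the aggregate weight $W$ is not infinitesimal, so the final division is well defined and order-preserving; without appreciability of $\hat{\mathbf{v}}$, the Rayleigh quotient itself need not be well defined, which is why the hypothesis appears in the statement.
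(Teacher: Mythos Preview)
Your proposal is correct and follows essentially the same route as the paper's own proof: invoke Proposition~\ref{mean} to write $\theta(\hat{A},\hat{\mathbf{v}})$ as a weighted mean $\bigl(\sum_j \lambda_j \hat{x}_j^{\ast}\hat{x}_j\bigr)\big/\bigl(\sum_j \hat{x}_j^{\ast}\hat{x}_j\bigr)$, sandwich the numerator between $\lambda_1 \sum_j \hat{x}_j^{\ast}\hat{x}_j$ and $\lambda_n \sum_j \hat{x}_j^{\ast}\hat{x}_j$, and divide through. In fact you are more careful than the paper, which asserts the monotonicity steps without checking that multiplication by nonnegative dual numbers and division by an appreciable positive dual number preserve the lexicographic order.
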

\begin{proof}
	According to Proposition \ref{mean}, it holds that 
	\begin{equation*}
		\lambda_{1}\left(\sum_{j} \hat{x}_{j}^{\ast} \hat{x}_{j}\right) \leq \sum_{j} \lambda_{j}\hat{x}_{j}^{\ast} \hat{x}_{j} \leq \lambda_{n}\left(\sum_{j} \hat{x}_{j}^{\ast} \hat{x}_{j}\right),
	\end{equation*}
	then 
	\begin{equation*}
		\lambda_{1} \leq \frac{\sum_{j} \lambda_{j}\hat{x}_{j}^{\ast} \hat{x}_{j}}{\sum_{j}\hat{x}_{j}^{\ast} \hat{x}_{j}} \leq \lambda_{n}.
	\end{equation*}
	This is complete the proof.
\end{proof}

\begin{corollary}
	The function $\theta(\hat{A},\hat{\mathbf{v}})$ attains its minimum value at the smallest right eigenvalue of $\hat{A}$, and its maximum value at the largest right eigenvalue of $\hat{A}$.
\end{corollary}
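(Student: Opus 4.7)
The plan is to combine Proposition \ref{maxmin}, which already gives the two-sided bound $\lambda_1 \leq \theta(\hat{A},\hat{\mathbf{v}}) \leq \lambda_n$ for every appreciable $\hat{\mathbf{v}}$, with the observation that each of these bounds is actually attained when $\hat{\mathbf{v}}$ is chosen to be an eigenvector of $\hat{A}$ corresponding to the extreme eigenvalue. Thus the corollary reduces to exhibiting explicit vectors that realize equality.

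First I would invoke Theorem \ref{unitary} to obtain an orthonormal basis $\{\hat{\mathbf{u}}_{1},\ldots,\hat{\mathbf{u}}_{n}\}$ of $\mathbb{DQ}^{n}$ consisting of right eigenvectors of $\hat{A}$, satisfying $\hat{A}\hat{\mathbf{u}}_{j}=\hat{\mathbf{u}}_{j}\lambda_{j}$ with $\lambda_{1}\leq \lambda_{2}\leq \cdots \leq \lambda_{n}$ in the dual-number total order introduced in Section \ref{section2}, and $\hat{\mathbf{u}}_{i}^{\ast}\hat{\mathbf{u}}_{j}=\delta_{ij}$. In particular each $\hat{\mathbf{u}}_{j}$ is appreciable, so the Rayleigh quotient is well defined at $\hat{\mathbf{u}}_{j}$.

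Next I would substitute $\hat{\mathbf{v}}=\hat{\mathbf{u}}_{1}$ directly into the definition of $\theta$. Using $\hat{A}\hat{\mathbf{u}}_{1}=\hat{\mathbf{u}}_{1}\lambda_{1}$ and $\hat{\mathbf{u}}_{1}^{\ast}\hat{\mathbf{u}}_{1}=1_{D}$, one gets
\begin{equation*}
\theta(\hat{A},\hat{\mathbf{u}}_{1}) \;=\; \dfrac{\hat{\mathbf{u}}_{1}^{\ast}\hat{A}\hat{\mathbf{u}}_{1}}{\hat{\mathbf{u}}_{1}^{\ast}\hat{\mathbf{u}}_{1}} \;=\; \dfrac{(\hat{\mathbf{u}}_{1}^{\ast}\hat{\mathbf{u}}_{1})\lambda_{1}}{\hat{\mathbf{u}}_{1}^{\ast}\hat{\mathbf{u}}_{1}} \;=\; \lambda_{1},
\end{equation*}
where I used that $\lambda_{1}\in\mathbb{D}$ is a dual number, which commutes with the dual number $\hat{\mathbf{u}}_{1}^{\ast}\hat{\mathbf{u}}_{1}$. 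This, together with the lower bound from Proposition \ref{maxmin}, shows that $\lambda_{1}$ is actually the minimum value of $\theta(\hat{A},\cdot)$. The analogous computation at $\hat{\mathbf{v}}=\hat{\mathbf{u}}_{n}$ yields $\theta(\hat{A},\hat{\mathbf{u}}_{n})=\lambda_{n}$, matching the upper bound in Proposition \ref{maxmin} and giving the maximum.

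There is essentially no hard step here; the only thing that requires a moment of care is that in the dual quaternion setting the eigenvalue is multiplied on the right of the eigenvector, so one has to move the scalar $\lambda_{1}$ through $\hat{\mathbf{u}}_{1}^{\ast}$ correctly. Because $\lambda_{1},\lambda_{n}$ are dual numbers (by Theorem \ref{unitary}) and $\hat{\mathbf{u}}_{j}^{\ast}\hat{\mathbf{u}}_{j}$ is likewise a dual number, they commute, so the simplification is immediate and no further argument is needed.
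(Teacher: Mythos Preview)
Your argument is correct. The paper itself gives no proof for this corollary, treating it as an immediate consequence of Proposition~\ref{maxmin}; your write-up supplies exactly the missing step---exhibiting the eigenvectors $\hat{\mathbf{u}}_{1}$ and $\hat{\mathbf{u}}_{n}$ from Theorem~\ref{unitary} to show the bounds are attained---and handles the one subtlety (commutativity of the dual-number eigenvalue with $\hat{\mathbf{u}}_{j}^{\ast}\hat{\mathbf{u}}_{j}$) appropriately.
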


When seeking an eigenvector associated with the eigenvalue near some prescribed target, we often adopt the practically useful and effective Rayleigh quotient iteration (RQI) method, which essentially possesses a shift-and-invert transformation to the currently available approximate eigenvector. That is, at each iterate the following pair of equations
\begin{equation*}
\hat{\theta}_{k}=\hat{\mathbf{u}}_{k}^{\ast}\hat{A}\hat{\mathbf{u}}_{k},
~ (\hat{A}-\hat{\theta}_{k} I)\hat{\mathbf{w}}_{k+1}=\hat{\mathbf{u}}_{k},
~ \hat{\mathbf{u}}_{k+1}=\frac{\hat{\mathbf{w}}_{k+1}}{\|\hat{\mathbf{w}}_{k+1}\|_{2}},
\end{equation*}
where $\hat{\mathbf{u}}_{k}$ is the currently available approximate eigenvector with $\|\hat{\mathbf{u}}_{k}\|_{2}=1$, and $\hat{\theta}_{k}$ is the corresponding Rayleigh quotient, need to be solved. This process is repeated until convergent or the maximal iteration number is reached.

\subsection{On the solution of the linear system $(\hat{A}-\hat{\theta} I)\hat{\mathbf{w}}=\hat{\mathbf{u}}$}

\noindent The key step of the Rayleigh quotient iteration is how to effectively solve the linear system $(\hat{A}-\hat{\theta} I)\hat{\mathbf{w}}=\hat{\mathbf{u}}$. From Lemma \ref{dualre_lemma}, we have the following equation
\begin{align}
(\hat{A}-\hat{\theta} I)\hat{\mathbf{w}}=\hat{\mathbf{u}} ~ & \Longleftrightarrow ~ \left[(\hat{A}-\hat{\theta} I)\hat{\mathbf{w}}\right]_{c}^{\sigma}=\hat{\mathbf{u}}_{c}^{\sigma} \label{equation1} \\
& \Longleftrightarrow ~ (\hat{A}-\hat{\theta} I)^{\sigma}\hat{\mathbf{w}}_{c}^{\sigma}=\hat{\mathbf{u}}_{c}^{\sigma}. \nonumber
\end{align}
The equation (\ref{equation1}) is derived by the dual representation and the structure-preserving strategy. In this context, we equivalently transform the linear system over dual quaternions into a linear system over dual numbers.

To solve $(\hat{A}-\hat{\theta} I)^{\sigma}\hat{\mathbf{w}}_{c}^{\sigma}=\hat{\mathbf{u}}_{c}^{\sigma}$, let $\hat{A}-\hat{\theta} I=\tilde{A}_{\mathrm{st}}+\tilde{A}_{\mathcal{I}}\epsilon$, $\hat{\mathbf{w}}_{c}^{\sigma}=\tilde{\mathbf{w}}_{\mathrm{st}}+\tilde{\mathbf{w}}_{\mathcal{I}}\epsilon$, $\hat{\mathbf{u}}_{c}^{\sigma}=\tilde{\mathbf{u}}_{\mathrm{st}}+\tilde{\mathbf{u}}_{\mathcal{I}}\epsilon$, where $\tilde{A}_{\mathrm{st}}, \tilde{A}_{\mathcal{I}} \in \mathbb{R}^{4n\times 4n},~\tilde{\mathbf{w}}_{\mathrm{st}},\tilde{\mathbf{w}}_{\mathcal{I}},\tilde{\mathbf{u}}_{\mathrm{st}},\tilde{\mathbf{u}}_{\mathcal{I}} \in \mathbb{R}^{4n\times 1}$, then we have
\begin{align}
(\hat{A}-\hat{\theta} I)^{\sigma}\hat{\mathbf{w}}_{c}^{\sigma}=\hat{\mathbf{u}}_{c}^{\sigma} ~ & \Longleftrightarrow ~ (\tilde{A}_{\mathrm{st}}+\tilde{A}_{\mathcal{I}}\epsilon)(\tilde{\mathbf{w}}_{\mathrm{st}}+\tilde{\mathbf{w}}_{\mathcal{I}}\epsilon)=\tilde{\mathbf{u}}_{\mathrm{st}}+\tilde{\mathbf{u}}_{\mathcal{I}}\epsilon \nonumber \\
& \Longleftrightarrow ~ \tilde{A}_{\mathrm{st}}\tilde{\mathbf{w}}_{\mathrm{st}}+\tilde{A}_{\mathrm{st}}\tilde{\mathbf{w}}_{\mathcal{I}}\epsilon+\tilde{A}_{\mathcal{I}}\mathbf{x}_{\mathrm{st}}\epsilon=\tilde{\mathbf{u}}_{\mathrm{st}}+\tilde{\mathbf{u}}_{\mathcal{I}}\epsilon \nonumber \\
& \Longleftrightarrow ~ \tilde{A}_{\mathrm{st}}\tilde{\mathbf{w}}_{\mathrm{st}}+(\tilde{A}_{\mathrm{st}}\tilde{\mathbf{w}}_{\mathcal{I}}+\tilde{A}_{\mathcal{I}}\tilde{\mathbf{w}}_{\mathrm{st}})\epsilon=\tilde{\mathbf{u}}_{\mathrm{st}}+\tilde{\mathbf{u}}_{\mathcal{I}}\epsilon. \label{equation2}
\end{align}
Considering the standard and dual parts of equation (\ref{equation2}), one can split it into the following two matrix equations
\begin{equation*}
\left\{\begin{aligned}
	&\tilde{A}_{\mathrm{st}}\tilde{\mathbf{w}}_{\mathrm{st}} =\tilde{\mathbf{u}}_{\mathrm{st}}, \\
	&\tilde{A}_{\mathrm{st}}\tilde{\mathbf{w}}_{\mathcal{I}}+\tilde{A}_{\mathcal{I}}\tilde{\mathbf{w}}_{\mathrm{st}} = \tilde{\mathbf{u}}_{\mathcal{I}},
\end{aligned}\right.
\end{equation*}
which can be equivalently written as
\begin{equation}\label{eqn1}
\begin{bmatrix}
	\tilde{A}_{\mathrm{st}} & \tilde{A}_{\mathcal{I}} \\
	\mathbf{O} & \tilde{A}_{\mathrm{st}}
\end{bmatrix}\begin{bmatrix} 
	\tilde{\mathbf{w}}_{\mathcal{I}} \\
	\tilde{\mathbf{w}}_{\mathrm{st}}
\end{bmatrix}=\begin{bmatrix}
	\tilde{\mathbf{u}}_{\mathcal{I}} \\
	\tilde{\mathbf{u}}_{\mathrm{st}}
\end{bmatrix}.
\end{equation}
The linear system (\ref{eqn1}) is consistent if and only if
\begin{equation*}
\mathrm{rank}\begin{bmatrix}
	\tilde{A}_{\mathrm{st}} &\tilde{A}_{\mathcal{I}}&\tilde{\mathbf{u}}_{\mathcal{I}} \\
	\mathbf{O}&\tilde{A}_{\mathrm{st}}&\tilde{\mathbf{u}}_{\mathrm{st}}
\end{bmatrix}=\mathrm{rank}\begin{bmatrix}
	\tilde{A}_{\mathrm{st}} & \tilde{A}_{\mathcal{I}} \\
	\mathbf{O}&\tilde{A}_{\mathrm{st}}
\end{bmatrix}.
\end{equation*}
In addition, if the linear system (\ref{eqn1}) is not consistent, then one can find a solution $\mathbf{x}^{\star}$ such that
\begin{equation*}
\left\|\begin{bmatrix}
	\tilde{A}_{\mathrm{st}} & \tilde{A}_{\mathcal{I}} \\
	\mathbf{O} & \tilde{A}_{\mathrm{st}}
\end{bmatrix}\begin{bmatrix} 
	\tilde{\mathbf{w}}_{\mathcal{I}}^{\star} \\
	\tilde{\mathbf{w}}_{\mathrm{st}}^{\star}
\end{bmatrix}-\begin{bmatrix}
	\tilde{\mathbf{u}}_{\mathcal{I}} \\
	\tilde{\mathbf{u}}_{\mathrm{st}}
\end{bmatrix}\right\|_{2}=\min\limits_{\mathbf{x}\in \mathbb{R}^{8n\times 1}}\left\|\begin{bmatrix}
	\tilde{A}_{\mathrm{st}} & \tilde{A}_{\mathcal{I}} \\
	\mathbf{O} & \tilde{A}_{\mathrm{st}}
\end{bmatrix}\begin{bmatrix} 
	\tilde{\mathbf{w}}_{\mathcal{I}} \\
	\tilde{\mathbf{w}}_{\mathrm{st}}
\end{bmatrix}-\begin{bmatrix}
	\tilde{\mathbf{u}}_{\mathcal{I}} \\
	\tilde{\mathbf{u}}_{\mathrm{st}}
\end{bmatrix}\right\|_{2},
\end{equation*}
where $\tilde{\mathbf{w}}^{\star}$ is called a least squares solution. In both cases, the general solution to (\ref{eqn1}) can be expressed as
\begin{equation*}
\begin{bmatrix} 
	\tilde{\mathbf{w}}_{\mathcal{I}} \\
	\tilde{\mathbf{w}}_{\mathrm{st}}
\end{bmatrix}=\begin{bmatrix}
	\tilde{A}_{\mathrm{st}} & \tilde{A}_{\mathcal{I}} \\
	\mathbf{O}&\tilde{A}_{\mathrm{st}}
\end{bmatrix}^{\dagger}\begin{bmatrix}
	\tilde{\mathbf{u}}_{\mathcal{I}} \\
	\tilde{\mathbf{u}}_{\mathrm{st}}
\end{bmatrix}+\left(I_{8n}-\begin{bmatrix}
	\tilde{A}_{\mathrm{st}} & \tilde{A}_{\mathcal{I}} \\
	\mathbf{O}&\tilde{A}_{\mathrm{st}}
\end{bmatrix}^{\dagger}\begin{bmatrix}
	\tilde{A}_{\mathrm{st}} & \tilde{A}_{\mathcal{I}} \\
	\mathbf{O}&\tilde{A}_{\mathrm{st}}
\end{bmatrix}\right)\mathbf{z},
\end{equation*}
where $\mathbf{z}$ is a vector of any suitable sizes. Fortunately, it is not necessary to compute the matrix
\begin{equation*}
\begin{bmatrix}
	\tilde{A}_{\mathrm{st}} & \tilde{A}_{\mathcal{I}} \\
	\mathbf{O} & \tilde{A}_{\mathrm{st}}
\end{bmatrix}^{\dagger},
\end{equation*}
explicitly as this could
be rather expensive for large problems. Instead, we just need to carry out the singular value decomposition prior to starting the vector iteration itself. Subsequently, the solution of (\ref{equation2}) can be obtained.

The proposed Rayleigh quotient iteration (RQI) for solving Problem (\ref{problem}) is summarized in Algorithm \ref{RQI}.

\begin{algorithm}[H]
\caption{\ The Rayleigh quotient iteration (RQI).}
\begin{algorithmic}[1]\label{RQI}
	\REQUIRE Given a normalized initial guess $\hat{\mathbf{u}}_{0} \in \mathbb{DQ}^{n\times 1}$ with $\|\hat{\mathbf{u}}_{0}\|_{2}=1$, the maximal iteration number $k_{max}$ and the stopping tolerance $\varepsilon$.
	\ENSURE Eigenvalue $\hat{\theta}_{k}$ and eigenvector $\hat{\mathbf{u}}_{k+1}$.
	\FOR {$k=0,1,2,...,k_{max}$}
	\STATE Compute the Rayleigh quotient $\hat{\theta}_{k}=\hat{\mathbf{u}}_{k}^{\ast}\hat{A}\hat{\mathbf{u}}_{k}$.
	\STATE If $\hat{A}-\hat{\theta}_{k}I$ is singular, then solve $(\hat{A}-\hat{\theta}_{k}I)\hat{\mathbf{u}}_{k+1}=\hat{\mathbf{0}}$ for unit vector $\hat{\mathbf{u}}_{k+1}$ and stop. Otherwise, solve equation $(\hat{A}-\hat{\theta}_{k}I)\hat{\mathbf{w}}_{k+1}=\hat{\mathbf{u}}_{k}$ for $\hat{\mathbf{w}}_{k+1}$. 
	\STATE Normalize $\hat{\mathbf{u}}_{k+1}=\hat{\mathbf{w}}_{k+1} / \|\hat{\mathbf{w}}_{k+1}\|_{2}$.
	\STATE If $\|\hat{A}\hat{\mathbf{u}}_{k+1}-\hat{\mathbf{u}}_{k+1}\hat{\theta}_{k}\|_{2^{R}} \leqslant \varepsilon$, then stop.
	\ENDFOR
\end{algorithmic}
\end{algorithm}

\begin{remark}
In order to avoid the matrix factorization and reduce the total computational cost of the RQI, we may solve the shifted linear system in Step $3$ roughly such that
$$\|(\hat{A}-\hat{\theta}_{k}I)\hat{\mathbf{w}}_{k+1}-\hat{\mathbf{u}}_{k}\|_{2^{R}} \leq \delta$$
by an iteration method, resulting in the so-called inexact Rayleigh quotient iteration.
\end{remark}

\subsection{Convergence Analysis}\label{section4}

\noindent Similar to the Rayleigh quotient iteration for the real matrices, we show the eigenpair $(\hat{\theta}_{k},\hat{\mathbf{u}}_{k})$ converges to a fixed eigenvalue with associated eigenvector. We specifically demonstrate the local convergence property of RQI by examining sequences related to the approximate eigenvectors, residual norms, and Raleigh quotients. These sequences are denoted as $\left\{\mathbf{u}_{k}\right\}_{k=0}^{\infty}$, representing the approximate eigenvectors; $\left\{\left\|\mathbf{r}_{k}\right\|_{2}\right\}_{k=0}^{\infty}$, measuring the residual norms; and $\left\{\theta_{k}\right\}_{k=1}^{\infty}$, characterizing the Raleigh quotients associated with the approximate eigenpairs, which are denoted as $\left(\theta_{k}, \mathbf{u}_{k}\right)$ for $k=1,2,\ldots,n$.

For purposes of analysis, it is convenient to combine lines 3 and 4 of Algorithm \ref{RQI} into
\begin{equation*}
	(\hat{A}-\hat{\theta}_{k}I)\hat{\mathbf{u}}_{k+1}=\hat{\mathbf{u}}_{k}\tau_{k},
\end{equation*}
where $\tau_{k}$ is an appreciable positive dual number which ensures that $\|\hat{\mathbf{u}}_{k+1}\|_{2}=1$.

First of all , we introduce the minimal residual property of the Rayleigh quotient.

\begin{lemma}\label{mini}
	Given a dual quaternion Hermitian matrix $\hat{A}$, for any unit dual quaternion vector $\hat{\mathbf{x}}$ satisfying $\|\hat{\mathbf{x}}\|_{2}=1$, the Rayleigh quotient  $\theta=\hat{\mathbf{x}}^{\ast}\hat{A}\hat{\mathbf{x}}$ satisfies
	\begin{equation*}
		\|(\hat{A}-\theta I) \hat{\mathbf{x}}\|_{2} \leq\|(\hat{A}-\lambda I) \hat{\mathbf{x}}\|_{2}
	\end{equation*}
	for any dual number $\lambda$.
\end{lemma}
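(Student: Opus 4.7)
The plan is to mimic the classical real/complex proof by writing $(\hat{A}-\lambda I)\hat{\mathbf{x}} = (\hat{A}-\theta I)\hat{\mathbf{x}} + (\theta-\lambda)\hat{\mathbf{x}}$ and then showing that this is an orthogonal decomposition, so the squared norms simply add. First I would apply Lemma \ref{normsum}(2) with $\hat{\mathbf{x}}_1:=(\hat{A}-\theta I)\hat{\mathbf{x}}$ and $\hat{\mathbf{y}}_1:=(\theta-\lambda)\hat{\mathbf{x}}$ to get
\begin{equation*}
\|(\hat{A}-\lambda I)\hat{\mathbf{x}}\|_2^2 = \|(\hat{A}-\theta I)\hat{\mathbf{x}}\|_2^2 + \|(\theta-\lambda)\hat{\mathbf{x}}\|_2^2 + \langle \hat{\mathbf{x}}_1,\hat{\mathbf{y}}_1\rangle + \langle \hat{\mathbf{y}}_1,\hat{\mathbf{x}}_1\rangle.
\end{equation*}

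Next I would check that the two cross terms vanish. Because both $\theta$ and $\lambda$ are dual numbers, their quaternion conjugates equal themselves and they commute with quaternions, so $((\theta-\lambda)\hat{\mathbf{x}})^{\ast} = \hat{\mathbf{x}}^{\ast}(\theta-\lambda)$. Thus
\begin{equation*}
\langle \hat{\mathbf{x}}_1,\hat{\mathbf{y}}_1\rangle = \hat{\mathbf{x}}^{\ast}(\theta-\lambda)\bigl(\hat{A}\hat{\mathbf{x}} - \theta\hat{\mathbf{x}}\bigr) = (\theta-\lambda)\bigl(\hat{\mathbf{x}}^{\ast}\hat{A}\hat{\mathbf{x}} - \theta\,\hat{\mathbf{x}}^{\ast}\hat{\mathbf{x}}\bigr).
\end{equation*}
The unit‐norm hypothesis together with Lemma \ref{induce} gives $\hat{\mathbf{x}}^{\ast}\hat{\mathbf{x}} = 1$, and by definition $\hat{\mathbf{x}}^{\ast}\hat{A}\hat{\mathbf{x}} = \theta$, so the bracketed quantity is $\theta - \theta = 0$. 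The other cross term is handled identically (or by $\langle \hat{\mathbf{y}}_1,\hat{\mathbf{x}}_1\rangle=\langle \hat{\mathbf{x}}_1,\hat{\mathbf{y}}_1\rangle^{\ast}$). Hence
\begin{equation*}
\|(\hat{A}-\lambda I)\hat{\mathbf{x}}\|_2^2 = \|(\hat{A}-\theta I)\hat{\mathbf{x}}\|_2^2 + \|(\theta-\lambda)\hat{\mathbf{x}}\|_2^2 \geq \|(\hat{A}-\theta I)\hat{\mathbf{x}}\|_2^2,
\end{equation*}
since $\|(\theta-\lambda)\hat{\mathbf{x}}\|_2^2$ is a nonnegative dual number. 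Finally I would take square roots, invoking the fact that the square root is monotone on the total order of nonnegative appreciable dual numbers (which is immediate from the explicit formula $\sqrt{q_{\mathrm{st}}+q_{\mathcal{I}}\epsilon}=\sqrt{q_{\mathrm{st}}}+\frac{q_{\mathcal{I}}}{2\sqrt{q_{\mathrm{st}}}}\epsilon$ recalled in Section \ref{section2}), to obtain the desired inequality.

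The only delicate point I expect is the bookkeeping with the noncommutativity: one must use that $\theta,\lambda\in\mathbb{D}$ (not general quaternions) precisely to pass $(\theta-\lambda)$ through $\hat{\mathbf{x}}$ and $\hat{\mathbf{x}}^{\ast}$ and to identify $(\theta-\lambda)^{\ast}=\theta-\lambda$. If one tried to prove an analogous statement for a general quaternionic shift, the cross terms would not cancel; the proof therefore depends essentially on $\theta$ being a dual number, which is guaranteed by the Hermitian property of $\hat{A}$ as recorded just before Lemma \ref{induce}. A secondary subtlety is that the monotonicity of the square-root step requires the nonnegative dual numbers on both sides to be appreciable (or both infinitesimal), which is ensured in either case because the additive decomposition above preserves the standard part.
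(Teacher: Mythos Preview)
Your proof is correct and follows essentially the same route as the paper: both expand $\|(\hat{A}-\lambda I)\hat{\mathbf{x}}\|_2^2$ via Lemma~\ref{induce}, exploit that $\theta,\lambda\in\mathbb{D}$ are self-conjugate and commute with quaternions, and identify the minimum at $\lambda=\theta$. The only cosmetic difference is that the paper expands directly as a quadratic in $\lambda$ and completes the square, whereas you phrase the same computation as an orthogonal decomposition via Lemma~\ref{normsum}(2); the underlying algebra is identical.
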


\begin{proof}
	\begin{align}
		\|(\hat{A}-\lambda I)\mathbf{x}\|_{2}^{2}&=((\hat{A}-\lambda I)\hat{\mathbf{x}})^{\ast}((\hat{A}-\lambda I)\hat{\mathbf{x}}) \label{mini11} \\
		&=(\hat{\mathbf{x}}^{\ast}\hat{A}-\lambda\hat{\mathbf{x}}^{\ast})(\hat{A}\hat{\mathbf{x}}-\lambda\hat{\mathbf{x}}) \nonumber \\
		&=\hat{\mathbf{x}}^{\ast}\hat{A}\hat{A}\hat{\mathbf{x}}-2\lambda\hat{\mathbf{x}}^{\ast}\hat{A}\hat{\mathbf{x}}+\lambda^{2} \nonumber \\
		&=(\lambda-\hat{\mathbf{x}}^{\ast}\hat{A}\hat{\mathbf{x}})^{2} \nonumber.
	\end{align}
	Equality \eqref{mini11} is obtained by Lemma \ref{induce}. Therefore, the Rayleigh quotient
	$\theta=\hat{\mathbf{x}}^{\ast}\hat{A}\hat{\mathbf{x}}$ minimizes the residual norm $\|(A-\lambda I)\hat{\mathbf{x}}\|_{2}$, proving the minimal residual property of the Rayleigh quotient.
\end{proof}

\begin{theorem}\label{mono}
	Let $\hat{\mathbf{r}}_{k}=(\hat{A}-\hat{\theta}_{k}I) \hat{\mathbf{u}}_{k}$ be the residual at the $k^{\mathrm{th}}$ step of the Rayleigh Quotient Iteration. If $\hat{A}$ is Hermitian, then the sequence $\left\{\left\|\hat{\mathbf{r}}_{k}\right\|_{2}, k=0,1, \cdots\right\}$ is monotonically decreasing for any initial vector $\hat{\mathbf{u}}_{0}$. 
\end{theorem}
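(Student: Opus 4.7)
The plan is to combine the minimal residual property of the Rayleigh quotient (Lemma \ref{mini}) with the shift-and-invert structure of the RQI step, bridged by a Cauchy--Schwarz-type inequality for the dual quaternion-valued inner product. Throughout, write $B_k := \hat{A}-\hat{\theta}_k I$ (which is Hermitian since $\hat{A}$ is Hermitian and $\hat{\theta}_k \in \mathbb{D}$), and assume $B_k$ is invertible; the singular case makes the algorithm terminate with an exact eigenpair so $\hat{\mathbf{r}}_{k+1}=\hat{\mathbf{0}}$ and monotonicity is trivial.

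First I would recast the RQI step, as suggested in the excerpt, into the unified form $B_k\hat{\mathbf{u}}_{k+1}=\hat{\mathbf{u}}_k\tau_k$ with $\tau_k>0$ appreciable and $\|\hat{\mathbf{u}}_{k+1}\|_2=1$. Taking norms immediately yields $\|B_k\hat{\mathbf{u}}_{k+1}\|_2 = \tau_k$ (using $\|\hat{\mathbf{u}}_k\|_2=1$). Then, since $\hat{\theta}_{k+1}=\hat{\mathbf{u}}_{k+1}^{\ast}\hat{A}\hat{\mathbf{u}}_{k+1}$ is the Rayleigh quotient for the unit vector $\hat{\mathbf{u}}_{k+1}$, Lemma \ref{mini} applied with $\lambda=\hat{\theta}_k$ gives
\begin{equation*}
\|\hat{\mathbf{r}}_{k+1}\|_2 \;=\; \|(\hat{A}-\hat{\theta}_{k+1}I)\hat{\mathbf{u}}_{k+1}\|_2 \;\leq\; \|B_k\hat{\mathbf{u}}_{k+1}\|_2 \;=\; \tau_k .
\end{equation*}

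Next I would show the complementary bound $\tau_k\leq\|\hat{\mathbf{r}}_k\|_2$. Rearranging the RQI relation gives $\hat{\mathbf{u}}_{k+1}=B_k^{-1}\hat{\mathbf{u}}_k\,\tau_k$, so unit normalization forces $\tau_k=1/\|B_k^{-1}\hat{\mathbf{u}}_k\|_2$. On the other hand, using that $B_k$ (and hence $B_k^{-1}$) is Hermitian,
\begin{equation*}
1 \;=\; \hat{\mathbf{u}}_k^{\ast}\hat{\mathbf{u}}_k \;=\; \hat{\mathbf{u}}_k^{\ast}B_k^{\ast}B_k^{-1}\hat{\mathbf{u}}_k \;=\; (B_k\hat{\mathbf{u}}_k)^{\ast}(B_k^{-1}\hat{\mathbf{u}}_k) \;=\; \langle B_k^{-1}\hat{\mathbf{u}}_k,\, B_k\hat{\mathbf{u}}_k\rangle .
\end{equation*}
Invoking the Cauchy--Schwarz inequality for the dual quaternion-valued inner product (which holds in the dual-number ordering, see \cite{qi2021eigenvalues,qi2022dual}) on the two appreciable vectors $B_k^{-1}\hat{\mathbf{u}}_k$ and $B_k\hat{\mathbf{u}}_k$, I obtain
\begin{equation*}
1 \;=\; |\langle B_k^{-1}\hat{\mathbf{u}}_k,\, B_k\hat{\mathbf{u}}_k\rangle| \;\leq\; \|B_k^{-1}\hat{\mathbf{u}}_k\|_2\cdot\|B_k\hat{\mathbf{u}}_k\|_2 \;=\; \frac{\|\hat{\mathbf{r}}_k\|_2}{\tau_k},
\end{equation*}
which yields $\tau_k\leq\|\hat{\mathbf{r}}_k\|_2$. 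Chaining this with the bound from the previous paragraph gives $\|\hat{\mathbf{r}}_{k+1}\|_2\leq\tau_k\leq\|\hat{\mathbf{r}}_k\|_2$, which is the claimed monotonicity.

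The main obstacle is the Cauchy--Schwarz step in the dual-number partial order: one must verify that $|\langle \hat{\mathbf{p}},\hat{\mathbf{q}}\rangle|\leq\|\hat{\mathbf{p}}\|_2\|\hat{\mathbf{q}}\|_2$ is meaningful and valid when both sides are (possibly appreciable) dual numbers, and in particular that the infinitesimal parts compare correctly. Because the left-hand side here collapses to the real number $1$, the inequality reduces to $1\leq\|\hat{\mathbf{p}}\|_2\|\hat{\mathbf{q}}\|_2$, whose standard part is immediate from classical Cauchy--Schwarz applied to the standard parts, and the dual-part condition is automatic once the standard parts satisfy strict inequality; the boundary case where the standard parts meet equality (i.e., $B_k^{-1}\hat{\mathbf{u}}_k$ and $B_k\hat{\mathbf{u}}_k$ are standard-parallel) corresponds exactly to $\hat{\mathbf{u}}_k$ being an approximate eigenvector of the standard part, and can be handled by a direct computation using the expansion of $\hat{\mathbf{u}}_k$ in the unitary basis supplied by Theorem \ref{unitary}.
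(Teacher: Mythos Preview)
Your argument is correct and follows essentially the same route as the paper: apply Lemma~\ref{mini} to pass from $\|\hat{\mathbf{r}}_{k+1}\|_2$ to $\|B_k\hat{\mathbf{u}}_{k+1}\|_2=\tau_k$, then use the Cauchy--Schwarz inequality for dual quaternion vectors to bound $\tau_k$ by $\|\hat{\mathbf{r}}_k\|_2$. The paper organizes the second step slightly differently---it rewrites $\tau_k=|\hat{\mathbf{u}}_k^{\ast}B_k\hat{\mathbf{u}}_{k+1}|$ and applies Cauchy--Schwarz directly to the pair $(\hat{\mathbf{u}}_{k+1},B_k\hat{\mathbf{u}}_k)$ rather than to $(B_k^{-1}\hat{\mathbf{u}}_k,B_k\hat{\mathbf{u}}_k)$---but since $B_k^{-1}\hat{\mathbf{u}}_k$ is a dual-number multiple of $\hat{\mathbf{u}}_{k+1}$ these are the same inequality up to scaling; the paper also dispatches the Cauchy--Schwarz step by citing Proposition~5.5 of \cite{qi2021eigenvalues} instead of analyzing the dual-part boundary case as you do.
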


\begin{proof}
	\begin{align}
		\left\|\hat{\mathbf{r}}_{k+1}\right\|_{2} & =\|(\hat{A}-\hat{\theta}_{k+1}I) \hat{\mathbf{u}}_{k+1}\|_{2} \nonumber \\
		& \leqq\|(\hat{A}-\hat{\theta}_{k}I) \hat{\mathbf{u}}_{k+1}\|_{2} \label{residual1} \\
		& =|\hat{\mathbf{u}}_{k}^{\ast}(\hat{A}-\hat{\theta}_{k}I) \hat{\mathbf{u}}_{k+1}| \label{residual2}  \\
		& \leqq\|\hat{\mathbf{u}}_{k}^{\ast}(\hat{A}-\hat{\theta}_{k}I)\|_{2}\|\hat{\mathbf{u}}_{k+1}\|_{2} \label{residual3}  \\
		& =\|\hat{\mathbf{u}}_{k}^{\ast}(\hat{A}-\hat{\theta}_{k}I)\|_{2} \nonumber  \\
		& =\|(\hat{A}-\hat{\theta}_{k}I) \hat{\mathbf{u}}_{k}\|_{2} \nonumber  \\
		& =\|\hat{\mathbf{r}}_{k}\|_{2}. \nonumber 
	\end{align}
	Inequality \eqref{residual1} holds by Lemma \ref{mini}. From lines 3 and 4 of Algorithm \ref{RQI}, we have
	\begin{equation}\label{lines34}
		\begin{cases}
			(\hat{A}-\hat{\theta}_{k}I)\hat{\mathbf{w}}_{k+1}=\hat{\mathbf{u}}_{k} \\ 
			\hat{\mathbf{u}}_{k+1}=\hat{\mathbf{w}}_{k+1} / \|\hat{\mathbf{w}}_{k+1}\|_{2}
		\end{cases} \Longrightarrow (\hat{A}-\hat{\theta}_{k}I)\hat{\mathbf{u}}_{k+1}=\dfrac{\hat{\mathbf{u}}_{k}}{\|\hat{\mathbf{w}}_{k+1}\|_{2}}
	\end{equation}
	Taking the 2-norm on both sides of \eqref{lines34}, we obtain 
	\begin{equation*}
		\left\|(\hat{A}-\hat{\theta}_{k}I) \hat{\mathbf{u}}_{k+1}\right\|_{2} =
		\left|\hat{\mathbf{u}}_{k}^{\ast}(\hat{A}-\hat{\theta}_{k}I) \hat{\mathbf{u}}_{k+1}\right|,
	\end{equation*}
	then \eqref{residual2} holds. Inequality \eqref{residual3} holds due to the Cauchy-Schwarz inequality (Proposition 5.5 in \cite{qi2021eigenvalues}).
	Equality for
	\begin{equation*}
		\left\|\hat{\mathbf{r}}_{k+1}\right\|_{2} \leq \left\|\hat{\mathbf{r}}_{k}\right\|_{2}
	\end{equation*}
	is achieved when $\hat{\theta}_{k+1}=\hat{\theta}_{k}$ and $\hat{\mathbf{u}}_{k+1}^{\ast}$ is parallel to $\hat{\mathbf{u}}_{k}^{\ast}(\hat{A}-\hat{\theta}_{k}I)$.
\end{proof}

\begin{theorem}\label{main}
	Given a dual quaternion Hermitian matrix $\hat{A}\in \mathbb{DQ}^{n\times n}$. Let the RQI (Algorithm \ref{RQI}) be applied to a dual quaternion Hermitian matrix $\hat{A}$ with a proper starting vector $\hat{\mathbf{u}}_{0}$. When $k$ goes to infinity,
	
	\noindent $(1)$ $(\hat{\theta}_{k}, \hat{\mathbf{u}}_{k})$ converges to an eigenpair $(\lambda, \hat{\mathbf{x}})$ of $\hat{A}$, or
	
	\noindent $(2)$ $\hat{\theta}_{k}$ converges to a point equidistant from an eigenvalue of $\hat{A}$, and the sequence  $\left\{\hat{\mathbf{u}}_{k}\right\}$ cannot converge.
\end{theorem}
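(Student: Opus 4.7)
The plan is to combine the monotonicity of residual norms (Theorem \ref{mono}) with a compactness argument based on the dual representation and with the spectral decomposition (Theorem \ref{unitary}).

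First I would observe that, by Theorem \ref{mono}, the sequence $\{\|\hat{\mathbf{r}}_{k}\|_{2}\}$ is monotonically non-increasing and bounded below, hence converges to some non-negative dual number $\rho$. Since each $\hat{\mathbf{u}}_{k}$ is a unit dual quaternion vector, the image sequence $\{(\hat{\mathbf{u}}_{k})_{c}^{\sigma}\}\subset\mathbb{D}^{4n\times 1}$ delivered by Theorem \ref{theorepre} is bounded in a finite-dimensional space (identify each dual number with a pair of real coefficients). A Bolzano--Weierstrass extraction then yields a subsequence $\hat{\mathbf{u}}_{k_{j}}\to\hat{\mathbf{u}}^{*}$ with $\|\hat{\mathbf{u}}^{*}\|_{2}=1$. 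Continuity of the Rayleigh quotient and of the residual map gives $\hat{\theta}_{k_{j}}\to\hat{\theta}^{*}=(\hat{\mathbf{u}}^{*})^{\ast}\hat{A}\hat{\mathbf{u}}^{*}$ and $\|(\hat{A}-\hat{\theta}^{*}I)\hat{\mathbf{u}}^{*}\|_{2}=\rho$.

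Next, using the unitary decomposition of Theorem \ref{unitary}, I would expand $\hat{\mathbf{u}}^{*}=\sum_{j}\hat{\mathbf{u}}_{j}\hat{x}_{j}$ in the orthonormal eigenbasis of $\hat{A}$, so that Proposition \ref{mean} gives $\hat{\theta}^{*}=\sum_{j}\lambda_{j}|\hat{x}_{j}|^{2}$ and $\rho^{2}=\sum_{j}(\lambda_{j}-\hat{\theta}^{*})^{2}|\hat{x}_{j}|^{2}$. If $\rho=0$, then for every $j$ with $\hat{x}_{j}$ appreciable one has $\lambda_{j}=\hat{\theta}^{*}$, so $(\hat{\theta}^{*},\hat{\mathbf{u}}^{*})$ is a right eigenpair of $\hat{A}$; since $\|\hat{\mathbf{r}}_{k}\|_{2}\to 0$ holds for the full sequence and not merely along the subsequence, the shift-and-invert step of Algorithm \ref{RQI} acts as a Newton-like correction around the eigenpair, which propagates the subsequential limit into full convergence of $\{(\hat{\theta}_{k},\hat{\mathbf{u}}_{k})\}$, giving alternative (1).

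If instead $\rho>0$, then the minimal residual property of Lemma \ref{mini} together with the stationarity condition identified inside the proof of Theorem \ref{mono} forces the equality case of Cauchy--Schwarz to be approached along the tail of the iteration, which in turn forces $(\lambda_{i}-\hat{\theta}^{*})^{2}=(\lambda_{j}-\hat{\theta}^{*})^{2}$ for at least two indices $i\neq j$ with $\hat{x}_{i},\hat{x}_{j}$ appreciable. This is exactly the statement that $\hat{\theta}^{*}$ is equidistant from two eigenvalues $\lambda_{i},\lambda_{j}$ of $\hat{A}$. For non-convergence of $\{\hat{\mathbf{u}}_{k}\}$, I would examine the shift-and-invert update restricted to the invariant subspace $\mathrm{span}\{\hat{\mathbf{u}}_{i},\hat{\mathbf{u}}_{j}\}$: by Theorem \ref{inverse_eigenvalue}, the relevant eigenvalues of $(\hat{A}-\hat{\theta}^{*}I)^{-1}$ on this plane have equal magnitude but opposite sign, so the normalized iterates alternate between two non-parallel directions, precluding convergence and yielding alternative (2). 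The main obstacle will be this last case: rigorously justifying the stationarity-forces-equidistance step in the dual-number ordering (where infinitesimal parts must be tracked separately from standard parts when the standard part of $\rho$ vanishes) and controlling the two-direction oscillation uniformly for all large $k$ rather than only in the limit.
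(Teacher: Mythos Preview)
Your overall architecture (monotone residuals $\to$ limit $\rho$, compactness of the unit sphere, dichotomy on $\rho=0$ vs.\ $\rho>0$) matches the paper's, but there is one genuine gap in the $\rho>0$ branch and one over-claim in the $\rho=0$ branch.

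\textbf{The gap in Case $\rho>0$.} You only obtain a \emph{subsequential} limit $\hat{\theta}^{*}$ via Bolzano--Weierstrass, yet the theorem asserts convergence of the \emph{full} sequence $\{\hat{\theta}_{k}\}$. The paper bridges this with an algebraic identity you do not have: expanding $\|\hat{\mathbf{r}}_{k+1}\|_{2}^{2}=\|(\hat{A}-\hat{\theta}_{k}I)\hat{\mathbf{u}}_{k+1}+(\hat{\theta}_{k}-\hat{\theta}_{k+1})\hat{\mathbf{u}}_{k+1}\|_{2}^{2}$ via Lemma~\ref{normsum} and the relation $(\hat{A}-\hat{\theta}_{k}I)\hat{\mathbf{u}}_{k+1}=\hat{\mathbf{u}}_{k}\tau_{k}$ yields
\[
\|\hat{\mathbf{r}}_{k+1}\|_{2}^{2}=\|\hat{\mathbf{r}}_{k}\|_{2}^{2}\,\kappa_{k}^{2}-|\hat{\theta}_{k}-\hat{\theta}_{k+1}|^{2},
\]
where $\kappa_{k}$ is the Cauchy--Schwarz cosine between $\hat{\mathbf{u}}_{k+1}$ and $(\hat{A}-\hat{\theta}_{k}I)\hat{\mathbf{u}}_{k}$. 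Since both residual norms tend to $\rho>0$ and $\kappa_{k}^{2}\leq 1$, this forces $|\hat{\theta}_{k}-\hat{\theta}_{k+1}|\to 0$, and then the existence of limit points upgrades to genuine convergence $\hat{\theta}_{k}\to\hat{\theta}$. Only \emph{after} this step can one pass to the equality case of Cauchy--Schwarz in the limit; your proposal invokes that equality case without having first pinned down a single limit $\hat{\theta}$, so the stationarity argument floats. Once $\hat{\theta}$ is fixed, the paper argues via $\tau_{k}^{2}\leq\|\hat{\mathbf{r}}_{k}\|_{2}\tau_{k}$ that each accumulation point $\hat{\mathbf{u}}$ satisfies $(\hat{A}-\hat{\theta}I)^{\ast}(\hat{A}-\hat{\theta}I)\hat{\mathbf{u}}=\hat{\mathbf{u}}\rho^{2}$, i.e.\ $\hat{\mathbf{u}}$ is a singular vector of the Hermitian matrix $\hat{A}-\hat{\theta}I$, whence $\rho=|\lambda_{i}-\hat{\theta}|$. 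Your eigenbasis-expansion route to the same conclusion is fine in principle, but it needs the full-sequence limit of $\hat{\theta}_{k}$ as input.

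\textbf{The over-claim in Case $\rho=0$.} The paper does \emph{not} prove that the full sequence $\{\hat{\mathbf{u}}_{k}\}$ converges; it only shows that every accumulation point is an eigenpair. Your appeal to a ``Newton-like correction'' to upgrade subsequential to full convergence is not justified here and is not needed to match what the paper actually establishes. Similarly, the paper's proof stops at $\rho=|\lambda_{i}-\hat{\theta}|$ in Case~(2) and does not carry out the two-direction oscillation analysis you sketch; so that part of your outline goes beyond what the paper proves (and would require additional work to make rigorous in the dual-number setting, as you yourself note).
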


\begin{proof}
	By Theorem \ref{mono}, the monotone sequence $\left\{\left\|\hat{\mathbf{r}}_{k}\right\|_{2}\right\}$ is bounded below by $0$. Let
	\begin{equation}\label{lim1}
		\lim \limits_{k \rightarrow \infty}\left\|\hat{\mathbf{r}}_{k}\right\|_{2}=\lim \limits_{k \rightarrow \infty}\|(\hat{A}-\hat{\theta_{k}}) \hat{\mathbf{u}}_{k}\|_{2} \rightarrow \tau \geq 0.
	\end{equation}	
	Since the sequence  $\left\{\mathbf{u}_{k}\right\}_{k=0}^{\infty}$ is confined to the unit sphere, a compact subset of  $\mathbb{DQ}^{n}$, $\left\{\mathbf{u}_{k}\right\}_{k=0}^{\infty}$ must have at least one accumulation point. Additionally, the $\{\hat{\theta}_{k}\}_{k=0}^{\infty}$ are confined to the numerical range of $\hat{A}$, which is a closed and bounded region over dual numbers. Therefore, the sequence $\{\hat{\theta}_{k}\}_{k=0}^{\infty}$ have limit points too. 
	
	We discuss the following two cases separately based on the limit of the residual.
	
	Case 1: $\tau=0$ (the usual case). 
	By definition, if $(\hat{\theta}, \hat{\mathbf{u}})$ is an accumulation point of the sequence $\{\hat{\theta}_{k}, \hat{\mathbf{u}}_{k}\}_{k=0}^{\infty}$, then there exists a subsequence ${(\hat{\theta}_{i}, \hat{\mathbf{u}}_{i})}$ indexed by some set $\mathcal{M}$, which converges to $(\hat{\theta}, \hat{\mathbf{u}})$. When $i$ goes to infinity in $\mathcal{M}$, because $\hat{\theta}$ is a continuous function on the unit sphere, then we have
	\begin{equation*}
		\hat{\theta}(\hat{\mathbf{u}})=\lim _{i\in \mathcal{M}} \hat{\theta}(\hat{\mathbf{u}}_{i})=\hat{\theta}
	\end{equation*}
	and taking limits in this subsequence yields
	\begin{equation*}
		\|(\hat{A}-\hat{\theta}I) \hat{\mathbf{u}}\|_{2}=\lim _{i\in \mathcal{M}}\|(\hat{A}-\hat{\theta}_{i}I) \hat{\mathbf{u}}_{i}\|_{2}=\tau=0.
	\end{equation*}
	Thus, $(\hat{\theta},\hat{\mathbf{u}})$ must be an eigenpair of $A$. 
	
	Case 2. $\tau>0$. Note that
	\begin{align}
		\|\hat{\mathbf{r}}_{k+1}\|_{2}^{2} & =\|(\hat{A}-\hat{\theta}_{k+1}I) \hat{\mathbf{u}}_{k+1}\|_{2}^{2} \nonumber\\
		& =\|\hat{A}\hat{\mathbf{u}}_{k+1}-\hat{\theta}_{k+1}\hat{\mathbf{u}}_{k+1}-\hat{\theta}_{k}\hat{\mathbf{u}}_{k+1}+\hat{\theta}_{k}\hat{\mathbf{u}}_{k+1}\|_{2}^{2} \nonumber \\
		& = \|(\hat{A}-\hat{\theta}_{k}I) \hat{\mathbf{u}}_{k+1}+(\hat{\theta}_{k}-\hat{\theta}_{k+1})\hat{\mathbf{u}}_{k+1}\|_{2}^{2} \nonumber \\
		& =\|(\hat{A}-\hat{\theta}_{k}I) \hat{\mathbf{u}}_{k+1}\|_{2}^{2}+|\hat{\theta}_{k}-\hat{\theta}_{k+1}|^{2}+2|\hat{\theta}_{k}-\hat{\theta}_{k+1}|\hat{\mathbf{u}}_{k+1}^{\ast}(\hat{A}-\hat{\theta}_{k}I)\hat{\mathbf{u}}_{k+1} \label{normex} \\
		& = \|(\hat{A}-\hat{\theta}_{k}I) \hat{\mathbf{u}}_{k+1}\|_{2}^{2}-|\hat{\theta}_{k}-\hat{\theta}_{k+1}|^{2} \nonumber \\
		& = |\hat{\mathbf{u}}_{k}^{\ast}(\hat{A}-\hat{\theta}_{k}I) \hat{\mathbf{u}}_{k+1}|^{2}-|\hat{\theta}_{k}-\hat{\theta}_{k+1}|^{2} \nonumber \\
		& =\|(\hat{A}-\hat{\theta}_{k}I)\hat{\mathbf{u}}_{k}\|_{2}^{2}\cdot\kappa_{k}^{2}-|\hat{\theta}_{k}-\hat{\theta}_{k+1}|^{2} \label{kap} \\
		&=\|\hat{\mathbf{r}}_{k}\|_{2}^{2}\cdot\kappa_{k}^{2}-|\hat{\theta}_{k}-\hat{\theta}_{k+1}|^{2}, \nonumber
	\end{align}
	Equality \eqref{normex} is obtained by Lemma \ref{normsum}. Let
	\begin{equation*}
		\langle \hat{\mathbf{u}}_{k+1},(\hat{A}-\hat{\theta}_{k}I)\hat{\mathbf{u}}_{k}\rangle=\|\hat{\mathbf{u}}_{k+1}\|_{2}\|(\hat{A}-\hat{\theta}_{k}I)\hat{\mathbf{u}}_{k}\|_{2}\cdot\kappa_{k}=\|(\hat{A}-\hat{\theta}_{k}I)\hat{\mathbf{u}}_{k}\|_{2}\cdot\kappa_{k}
	\end{equation*}
	in \eqref{kap}. By the Cauchy-Schwarz inequality, we know that $\kappa_{k}^{2} \leq 1$.
	
	Since 
	\begin{equation*}
		\lim \limits_{k \rightarrow \infty}\left\|\hat{\mathbf{r}}_{k}\right\|_{2}=\tau
	\end{equation*}
	and the monotonic residual property must hold in the limit, then we have
	\begin{equation*}
		\lim \limits_{k \rightarrow \infty}|\hat{\theta}_{k}-\hat{\theta}_{k+1}| = 0.
	\end{equation*}
	Moreover, the sequence $\{\hat{\theta}_{k}\}_{k=0}^{\infty}$ have limit points, there can only be one of them. Thus,
	\begin{equation*}
		\hat{\theta}_{k} \rightarrow \theta, \ \text{as} \ k \rightarrow \infty.
	\end{equation*}
	
	As $k \rightarrow \infty$,
	\begin{equation*}
		\kappa_{k}^{2}=\left(\left\|\mathbf{r}_{k+1}\right\|_{2}^{2}+\left|\hat{\theta}_{k}-\hat{\theta}_{k+1}\right|^{2}\right) / \left\|\hat{\mathbf{r}}_{k}\right\|_{2}^{2} \rightarrow 1,
	\end{equation*}
	so we have
	\begin{equation}\label{lim2}
		\tau_{k}=\left|\hat{\mathbf{u}}_{k}^{\ast}\left(\hat{A}-\hat{\theta}_{k}I\right) \hat{\mathbf{u}}_{k+1}\right|=\left\|\hat{\mathbf{r}}_{k}\right\|_{2}\cdot \kappa_{k} \rightarrow \tau.
	\end{equation}
	Now
	\begin{equation*}
		\begin{aligned}
			\tau_{k}^{2} & =\hat{\mathbf{u}}_{k+1}^{\ast}\left(\hat{A}-\hat{\theta}_{k}I\right)^{\ast}\left(\hat{A}-\hat{\theta}_{k}I\right) \hat{\mathbf{u}}_{k+1} \\
			& \leqq\left\|\hat{\mathbf{u}}_{k+1}^{\ast}\right\|_{2}\left\|\left(\hat{A}-\hat{\theta}_{k}I\right)^{\ast} \left(\hat{A}-\hat{\theta}_{k}I\right) \hat{\mathbf{u}}_{k+1}\right\|_{2}, \\
			& =\left\|\left(\hat{A}-\hat{\theta}_{k}I\right)^{\ast} \hat{\mathbf{u}}_{k} \tau_{k}\right\|_{2}, \quad \text { since }\left(\hat{A}-\hat{\theta}_{k}I\right) \hat{\mathbf{u}}_{k+1}=\hat{\mathbf{u}}_{k} \tau_{k}, \\
			& =\left\|\hat{\mathbf{r}}_{k}\right\|_{2}\cdot\tau_{k}.
		\end{aligned}
	\end{equation*}
	Combining \eqref{lim1} and \eqref{lim2}, equality in the Cauchy-Schwarz inequality must hold in the limit, as $k \rightarrow \infty$. Let $\mathcal{N}$ be a subsequence of $\{0,1,2, \cdots\}$ such that 
	$$
	\lim \limits_{k \in \mathcal{N}} \hat{\mathbf{u}}_{k}=\hat{\mathbf{u}}.$$
	Since $\|(\hat{A}-\hat{\theta}I)\hat{\mathbf{u}}\|_{2}=\tau$, we have $(\hat{A}-\hat{\theta}I)^{\ast}(\hat{A}-\hat{\theta}I)\hat{\mathbf{u}}=\hat{\mathbf{u}}\tau^{2}$ for each limit point. Using the same subsequence again, we find
	\begin{equation*}
		\hat{\theta}=\lim_{k \in \mathcal{N}} \hat{\theta}\left(\hat{\mathbf{u}}_{k}\right)=\hat{\theta}\left(\hat{\mathbf{u}}\right).
	\end{equation*}
	This shows that each limit point $\hat{\mathbf{u}}$ of  $\left\{\mathbf{u}_{k}\right\}_{k=0}^{\infty}$ is a singular vector of $\hat{A}-\hat{\theta}I$ with associated singular value $\tau$. Note that $\hat{A}-\hat{\theta}I$ is Hermitian, thus 
	\begin{equation*}
		\tau=|\lambda_{i}-\hat{\theta}|,
	\end{equation*}
	where $\lambda_{i}$ is one of the eigenvalues of $\hat{A}$. This completes the proof.
\end{proof}

\subsection{Computing All Appreciable Eigenvalues of a Dual Quaternion Hermitian Matrix}

\noindent We want to obtain the remaining eigenvalues and their corresponding eigenvectors. According to Theorem 7.1 in \cite{ling2022singular}, the Eckart-Young-like theorem is applicable to dual quaternion matrices. Specifically, if we have a dual quaternion Hermitian matrix, denoted as $\hat{A} \in \mathbb{DQ}^{n \times n}$, then, as indicated in Theorem \ref{unitary}, $\hat{A}$ can be expressed in an alternate form as follows
\begin{equation}\label{eqall}
\hat{A}=\hat{U} \Sigma \hat{U}^{\ast}=\sum_{i=1}^{n} \lambda_{i} \hat{\mathbf{u}}_{i} \hat{\mathbf{u}}_{i}^{*},
\end{equation}
where  
$\hat{U}=\left[\hat{\mathbf{u}}_{1}, \ldots, \hat{\mathbf{u}}_{n}\right] \in \mathbb{DQ}^{n \times n}$ is a unitary matrix,  $\Sigma=\operatorname{diag}\left(\lambda_{1}, \ldots, \lambda_{n}\right) \in \mathbb{D}^{n \times n}$ is a diagonal dual matrix, and $\lambda_{i}, i=1, \ldots, n$ are in the descending order.

Denote  $\hat{A}_{k}=\hat{A}-\sum_{i=1}^{k-1} \lambda_{i} \hat{\mathbf{u}}_{i} \hat{\mathbf{u}}_{i}^{*}$. It follows that $\lambda_{k}$ and $\hat{\mathbf{u}}_{k}$ constitute an eigenpair of $\hat{A}_{k}$. When $\lambda_{k} \neq 0, \lambda_{k}$ and $\hat{\mathbf{u}}_{k}$ both can be computed by applying the Rayleigh quotient iteration to $\hat{A}_{k}$. By iteratively repeating this process for $k=1$ to $n$, we can ascertain all appreciable eigenvalues and the associated eigenvectors. A concise summary of this procedure can be found in Algorithm \ref{RQI_all}.

\begin{algorithm}[H]
\caption{\ Computing all appreciable eigenvalues.}
\begin{algorithmic}[1]\label{RQI_all}
	\REQUIRE Given a dual quaternion Hermitian matrix $\hat{A} \in \mathbb{DQ}^{n \times n}$, the tolerance $\gamma$.
	\ENSURE Eigenvalues $\{\lambda_{i}\}_{i=1}^{k}$ and eigenvectors $\{\hat{\mathbf{u}}_{i}\}_{i=1}^{k}$.
	\STATE Set $\hat{A}_{1}=\hat{A}$.
	\FOR {$k=1,2,...,$}
	\STATE Compute $\lambda_{k},\hat{\mathbf{u}}_{k}$ as the eigenpair of $\hat{A}_{k}$ by Algorithm \ref{RQI}.
	\STATE Update $\hat{A}_{k+1}=\hat{A}_{k}-\lambda_{k}\hat{\mathbf{u}}_{k}\hat{\mathbf{u}}_{k}^{\ast}$.
	\STATE If $\|\tilde{A}_{k+1,\mathrm{st}}\|_{F} \leqslant \gamma$, then stop.
	\ENDFOR
\end{algorithmic}
\end{algorithm}

Finally, we give the convergence theorem for Algorithm \ref{RQI_all}.

\begin{theorem}
Given a dual quaternion Hermitian matrix $A \in \mathbb{DQ}^{n \times n}$. Then Algorithm $\ref{RQI_all}$ can return all appreciable eigenvalues and their corresponding eigenvectors of $A$.
\end{theorem}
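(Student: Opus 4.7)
The plan is to track, by induction on $k$, the structure of the deflated matrix $\hat{A}_{k}$ and to use Theorem~\ref{main} to ensure that each call of Algorithm~\ref{RQI} extracts an eigenpair of the original matrix $\hat{A}$. First, I would invoke Theorem~\ref{unitary} (the Unitary Decomposition) to write
\[
\hat{A}=\hat{U}\Sigma\hat{U}^{\ast}=\sum_{i=1}^{n}\lambda_{i}\hat{\mathbf{u}}_{i}\hat{\mathbf{u}}_{i}^{\ast},
\]
where $\{\hat{\mathbf{u}}_{i}\}_{i=1}^{n}$ is an orthonormal basis of $\mathbb{DQ}^{n}$ (so $\hat{\mathbf{u}}_{i}^{\ast}\hat{\mathbf{u}}_{j}=\delta_{ij}$) and $\lambda_{1},\dots,\lambda_{n}\in\mathbb{D}$ are the $n$ eigenvalues of $\hat{A}$.

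Next, I would prove by induction on $k$ that
\[
\hat{A}_{k}=\sum_{i\in S_{k}}\lambda_{i}\hat{\mathbf{u}}_{i}\hat{\mathbf{u}}_{i}^{\ast},
\]
where $S_{k}\subseteq\{1,\dots,n\}$ is the set of indices of eigenpairs not yet extracted. The base case $k=1$ is exactly the decomposition above with $S_{1}=\{1,\dots,n\}$. For the inductive step, Theorem~\ref{main} applied to $\hat{A}_{k}$ guarantees that (under a proper starting vector) RQI converges to an eigenpair $(\lambda_{k},\hat{\mathbf{u}}_{k})$ of $\hat{A}_{k}$; by the induction hypothesis and the orthonormality of the $\hat{\mathbf{u}}_{i}$'s, this pair must coincide with $(\lambda_{j},\hat{\mathbf{u}}_{j})$ for some $j\in S_{k}$ (values of $\lambda_{i}$ with $i\in S_{k}$ are the only nonzero eigenvalues of $\hat{A}_{k}$). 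Then using orthonormality once more,
\[
\hat{A}_{k+1}=\hat{A}_{k}-\lambda_{j}\hat{\mathbf{u}}_{j}\hat{\mathbf{u}}_{j}^{\ast}=\sum_{i\in S_{k}\setminus\{j\}}\lambda_{i}\hat{\mathbf{u}}_{i}\hat{\mathbf{u}}_{i}^{\ast},
\]
which closes the induction with $S_{k+1}=S_{k}\setminus\{j\}$.

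Finally, I would analyse the termination criterion $\|\tilde{A}_{k+1,\mathrm{st}}\|_{F}\leq\gamma$. Writing $\lambda_{i}=\lambda_{i,\mathrm{st}}+\lambda_{i,\mathcal{I}}\epsilon$ and similarly splitting $\hat{\mathbf{u}}_{i}\hat{\mathbf{u}}_{i}^{\ast}$ into standard and dual parts, the standard part of $\hat{A}_{k+1}$ is $\tilde{A}_{k+1,\mathrm{st}}=\sum_{i\in S_{k+1}}\lambda_{i,\mathrm{st}}\,\tilde{\mathbf{u}}_{i,\mathrm{st}}\tilde{\mathbf{u}}_{i,\mathrm{st}}^{\ast}$. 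Since the $\tilde{\mathbf{u}}_{i,\mathrm{st}}$ are orthonormal in $\mathbb{Q}^{n}$, once every appreciable eigenvalue (i.e.\ every $\lambda_{i}$ with $\lambda_{i,\mathrm{st}}\neq 0$) has been removed, we obtain $\tilde{A}_{k+1,\mathrm{st}}=0$, so the stopping test is triggered. Conversely, as long as some appreciable eigenvalue remains in $S_{k+1}$, the orthonormality forces $\|\tilde{A}_{k+1,\mathrm{st}}\|_{F}>0$, so the algorithm does not stop prematurely. Combining these observations yields that Algorithm~\ref{RQI_all} returns precisely all appreciable eigenvalues of $\hat{A}$ together with their associated eigenvectors.

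The main obstacle in this plan is the inductive step: I must argue that the eigenpair produced by Algorithm~\ref{RQI} applied to $\hat{A}_{k}$ is in fact one of the surviving $(\lambda_{i},\hat{\mathbf{u}}_{i})$, rather than an unwanted limit point of the type described in case~(2) of Theorem~\ref{main}. This is where the ``proper starting vector'' assumption is used; formally one may assume that $\hat{\mathbf{u}}_{0}$ has nonzero component along some $\hat{\mathbf{u}}_{i}$ with $i\in S_{k}$, so that the limit point is indeed an eigenpair of $\hat{A}_{k}$ and the deflation identity above applies cleanly.
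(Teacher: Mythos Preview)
Your proposal is correct and follows exactly the approach the paper indicates: combine the rank-one expansion \eqref{eqall} (from Theorem~\ref{unitary}) with the convergence result Theorem~\ref{main}, and argue inductively on the deflated matrices $\hat{A}_{k}$. In fact your write-up is considerably more detailed than the paper's one-line proof, and you even flag the subtlety about case~(2) of Theorem~\ref{main} and the ``proper starting vector'' hypothesis, which the paper leaves implicit.
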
 

\begin{proof}
The theorem can be derived from the combination of equation (\ref{eqall}) and Theorem \ref{main}.
\end{proof} 

\section{Numerical Experiments}\label{section5}

\noindent In this section, some numerical examples are used to illustrate that Algorithm \ref{RQI_all} is feasible and effective to solve the dual quaternion Hermitian eigenvalue problem (\ref{problem}). All the tests are performed under Windows 11 and MATLAB version 23.2.0.2365128 (R2023b) with an AMD Ryzen 7 5800H with Radeon Graphics CPU at 3.20 GHz and 16 GB of memory.

\vskip 2mm

\noindent \textbf{Example 1} ~ Consider Problem (\ref{problem}) with a random dual quaternion Hermitian matrix $\hat{A}=\tilde{A}_{\mathrm{st}}+\tilde{A}_{\mathcal{I}}\epsilon \in \mathbb{DQ}^{6\times 6}$, where the standard part of $\hat{A}$ is $\tilde{A}_{\mathrm{st}}=A_{\mathrm{st},1}+A_{\mathrm{st},2}\mathbf{i}+A_{\mathrm{st},3}\mathbf{j}+A_{\mathrm{st},4}\mathbf{k}\in \mathbb{Q}^{6\times 6}$, $A_{\mathrm{st},1},A_{\mathrm{st},2},A_{\mathrm{st},3}$ and $A_{\mathrm{st},4}$ are equal to
\begin{equation*}
\resizebox{\linewidth}{!}{$
	\begin{bmatrix}
		0.6634 & 0.1840 & -0.0967 & -0.0590 & 0.1392 & -0.0366\\
		0.1840 & 0.6872 & 0.7003 & 0.2440 & -0.0937 & 0.2263\\
		-0.0967 & 0.7003 & 0.8900 & 0.3074 & -0.1671 & 0.1731\\
		-0.0590 & 0.2440 & 0.3074 & 0.1619 & -0.0953 & 0.0129\\
		0.1392 & -0.0937 & -0.1671 & -0.0953 & 0.0738 & -0.0184\\
		-0.0366 & 0.2263 & 0.1731 & 0.0129 & -0.0184 & 0.46621
	\end{bmatrix}, \begin{bmatrix}
		0 & -0.3887 & -0.4726 & -0.0169 & -0.0043 & -0.3205\\
		0.3887 & 0 & -0.3203 & -0.0722 & 0.0841 & 0.2418\\
		0.4726 & 0.3203 & 0 & 0.0054 & 0.0602 & 0.4245\\
		0.0169 & 0.0722 & -0.0054 & 0 & -0.0046 & 0.2474\\
		0.0043 & -0.0841 & -0.0602 & 0.0046 & 0 & -0.1842\\
		0.3205 & -0.2418 & -0.4245 & -0.2474 & 0.1842 & 0
	\end{bmatrix},
	$}
\end{equation*}
\begin{equation*}
\resizebox{\linewidth}{!}{$
	\begin{bmatrix}
		0 & 0.5196 & 0.5832 & 0.3211 & -0.1718 & 0.0341\\
		-0.5196 & 0 & 0.1365 & 0.1215 & -0.1603 & 0.2871\\
		-0.5832 & -0.1365 & 0 & -0.0072 & -0.1006 & 0.4132\\
		-0.3211 & -0.1215 & 0.0072 & 0 & -0.0521 & 0.0369\\
		0.1718 & 0.1603 & 0.1006 & 0.0521 & 0 & 0.0034\\
		-0.0341 & -0.2871 & -0.4132 & -0.0369 & -0.0034 & 0
	\end{bmatrix}, \begin{bmatrix}
		0 & -0.0324 & 0.1329 & 0.0221 & 0.0059 & -0.4517\\
		0.0324 & 0 & 0.0037 & 0.1781 & -0.0956 & -0.3582\\
		-0.1329 & -0.0037 & 0 & 0.2225 & -0.1549 & -0.1844\\
		-0.0221 & -0.1781 & -0.2225 & 0 & -0.0116 & -0.1129\\
		-0.0059 & 0.0956 & 0.1549 & 0.0116 & 0 & -0.0112\\
		0.4517 & 0.3582 & 0.1844 & 0.1129 & 0.0112 & 0
	\end{bmatrix},
	$}
\end{equation*}
respectively. The dual part of $\hat{A}$ is $\tilde{A}_{\mathcal{I}}=A_{\mathcal{I},1}+A_{\mathcal{I},2}\mathbf{i}+A_{\mathcal{I},3}\mathbf{j}+A_{\mathcal{I},4}\mathbf{k}\in \mathbb{Q}^{6\times 6}$, $A_{\mathcal{I},1},A_{\mathcal{I},2},A_{\mathcal{I},3}$ and $A_{\mathcal{I},4}$ are equal to
\begin{equation*}
\resizebox{\linewidth}{!}{$
	\begin{bmatrix}
		-0.8130 & -0.0039 & 0.2942 & -0.1255 & -0.1397 & -0.0313\\
		-0.0039 & -0.3840 & -0.2356 & 0.1480 & 0.8592 & 0.4817\\
		0.2942 & -0.2356 & -0.0863 & 0.4712 & 1.0305 & 0.2197\\
		-0.1255 & 0.1480 & 0.4712 & 0.2016 & 0.3178 & 0.6339\\
		-0.1397 & 0.8592 & 1.0305 & 0.3178 & -0.3965 & -0.3540\\
		-0.0313 & 0.4817 & 0.2197 & 0.6339 & -0.3540 & 0.2849
	\end{bmatrix}, \begin{bmatrix}
		0 & 0.0689 & 0.4862 & -0.4231 & -0.3142 & 0.1983\\
		-0.0689 & 0 & 0.0069 & -0.0679 & -0.5687 & -0.2706\\
		-0.4862 & -0.0069 & 0 & 0.2682 & -0.4452 & 0.1949\\
		0.4231 & 0.0679 & -0.2682 & 0 & -0.0837 & 0.2557\\
		0.3142 & 0.5687 & 0.4452 & 0.0837 & 0 & 0.5107\\
		-0.1983 & 0.2706 & -0.1949 & -0.2557 & -0.5107 & 0
	\end{bmatrix},
	$}
\end{equation*}
\begin{equation*}
\resizebox{\linewidth}{!}{$
	\begin{bmatrix}
		0 & -0.7266 & -0.2631 & -0.0285 & 0.8539 & 0.5629\\
		0.7266 & 0 & -0.2641 & 0.2712 & -0.0399 & 0.4132\\
		0.2631 & 0.2641 & 0 & 0.4700 & -0.4774 & 0.0773\\
		0.0285 & -0.2712 & -0.4700 & 0 & -0.0426 & 0.1856\\
		-0.8539 & 0.0399 & 0.4774 & 0.0426 & 0 & 0.6367\\
		-0.5629 & -0.4132 & -0.0773 & -0.1856 & -0.6367 & 0
	\end{bmatrix}, \begin{bmatrix}
		0 & 0.0525 & 0.1599 & -0.1970 & 0.5612 & 0.1146\\
		-0.0525 & 0 & 0.7538 & -0.2008 & 0.2974 & 0.4295\\
		-0.1599 & -0.7538 & 0 & -0.2703 & 0.1851 & 0.2496\\
		0.1970 & 0.2008 & 0.2703 & 0 & -0.2590 & 0.0729\\
		-0.5612 & -0.2974 & -0.1851 & 0.2590 & 0 & -0.3074\\
		-0.1146 & -0.4295 & -0.2496 & -0.0729 & 0.3074 & 0
	\end{bmatrix},
	$}
\end{equation*}
respectively.

In this experiment, we apply the Rayleigh quotient iteration (Algorithm \ref{RQI_all}) to compute all appreciable eigenvalues of $\hat{A}$ with an initial unit dual quaternion vector
\begin{equation*}
	\resizebox{\linewidth}{!}{$
		\hat{\mathbf{q}}=\begin{bmatrix}
			-0.5514-1.3693\mathbf{i}-0.6843\mathbf{j}-0.5685\mathbf{k}\\
			0.2849+0.6141\mathbf{i}+2.7228\mathbf{j}+0.9304\mathbf{k} \\
			-1.2502+0.1685\mathbf{i}-0.9310\mathbf{j}+0.4428\mathbf{k} \\
			-0.4044-1.0874\mathbf{i}+0.1983\mathbf{j}+1.3718\mathbf{k} \\
			-2.4265-0.4707\mathbf{i}-0.1511\mathbf{j}-0.4807\mathbf{k} \\
			0.8045+0.0331\mathbf{i}+0.8616\mathbf{j}+0.4602\mathbf{k}
		\end{bmatrix}+\begin{bmatrix}
			1.2034-0.3999\mathbf{i}+1.5179\mathbf{j}+0.7439\mathbf{k} \\
			0.4025-1.5399\mathbf{i}+1.1467\mathbf{j}-0.5636\mathbf{k} \\
			1.2182-0.8999\mathbf{i}-0.1223\mathbf{j}+0.8272\mathbf{k} \\
			-0.0179-0.3404\mathbf{i}-0.2029\mathbf{j}+1.3261\mathbf{k} \\
			-0.9534+0.6708\mathbf{i}+0.2699\mathbf{j}+0.8289\mathbf{k} \\
			-0.2012+0.0505\mathbf{i}+0.5192\mathbf{j}+0.1988\mathbf{k}
		\end{bmatrix}\epsilon.
		$}
\end{equation*}
This is a toy example to show the validity and reliability of our algorithm. We use the residual $\|\hat{A} \hat{\mathbf{v}}_{k}- \hat{\mathbf{v}}_{k} \lambda_{k}\|_{2^R}\leq 10^{-5}$ as the stopping criterion. The RQI algorithm successively obtained the following six eigenvalues of the matrix $\hat{A}$, and they are all dual numbers.
\begin{equation*}
	\begin{aligned}
		& 58.248-14.5130\epsilon, & 35.691+4.1262\epsilon,~ & 21.769+8.1369\epsilon, \\
		& 11.176-5.9870\epsilon, & 6.8844-2.0823\epsilon,~ & 2.9425-1.1933\epsilon.
	\end{aligned}
\end{equation*}
Meanwhile, the RQI algorithm obtains the residuals $\|\hat{A} \hat{\mathbf{v}}_{i}- \hat{\mathbf{v}}_{i} \lambda_{i}\|_{2^R}$ for the above $6$ eigenvalues are 
\begin{equation*}
	\begin{aligned}
		& 1.170094539721946e-06, ~ 2.179839717407519e-09, \\
		& 9.487484804907692e-08, ~ 1.442736183487963e-07, \\
		& 2.537630772794933e-06, ~ 6.023193804097548e-07,
	\end{aligned}
\end{equation*}
respectively. Throughout the process, the RQI algorithm iterates a total of $26$ steps and takes $0.0348$ seconds. In addition, we compare it with the power method (PM) \cite{cui2023power}, and the results are shown in Table \ref{tab1}. The convergence curves of the Rayleigh quotient iteration are given in Figure \ref{fig.1}. When computing the eigenvalues of the dense dual quaternion Hermitian matrix $\hat{A}$, the efficiency of RQI surpasses that of PM in terms of iteration steps and CPU running time. Example 1 shows that Algorithm \ref{RQI} is feasible and effective to solve Problem \ref{problem}.

\begin{table}[H]
	\centering
	\caption{The average numerical results of the RQI and PM methods for computing the $6$ eigenvalues of the dual quaternion dense matrix $A$.
	}
	\label{tab1}
		\begin{tabular}{ccccc}
			\toprule
			Method& Residual $\|\hat{A} \hat{\mathbf{v}}- \hat{\mathbf{v}} \lambda\|_{2^R}$ & Relative Error $|\hat{\lambda}-\lambda_{i}|/|\lambda|$ & IT & CPU \\
			\hline
			PM & 6.143314328534241e-06 & 3.132564162153321e-06 & 24.3333 & 0.0244 \\
			RQI & 7.585620775193183e-07 & 3.127950914619532e-06 & 4.3333 & 0.0058 \\
			\bottomrule
		\end{tabular}
\end{table}

\begin{figure}[htbp]
	\centering
	\includegraphics[scale=0.57]{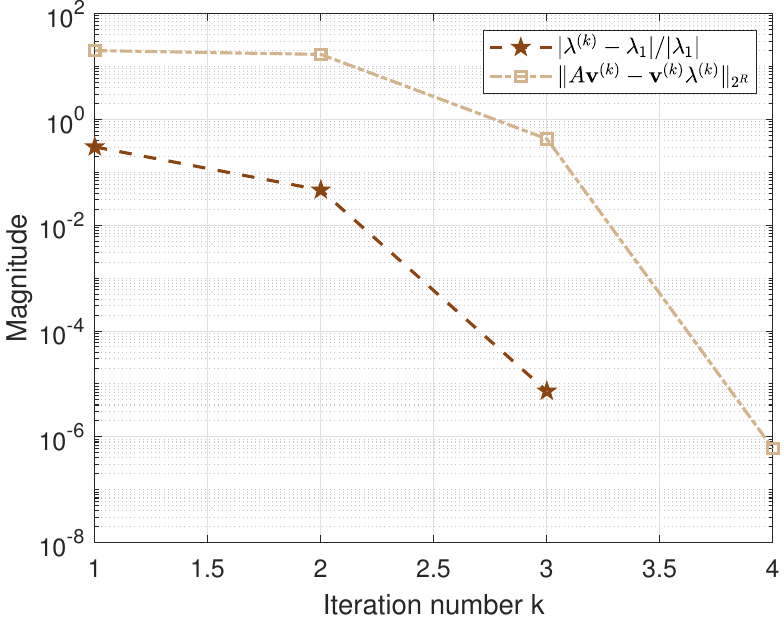}\quad
	\includegraphics[scale=0.57]{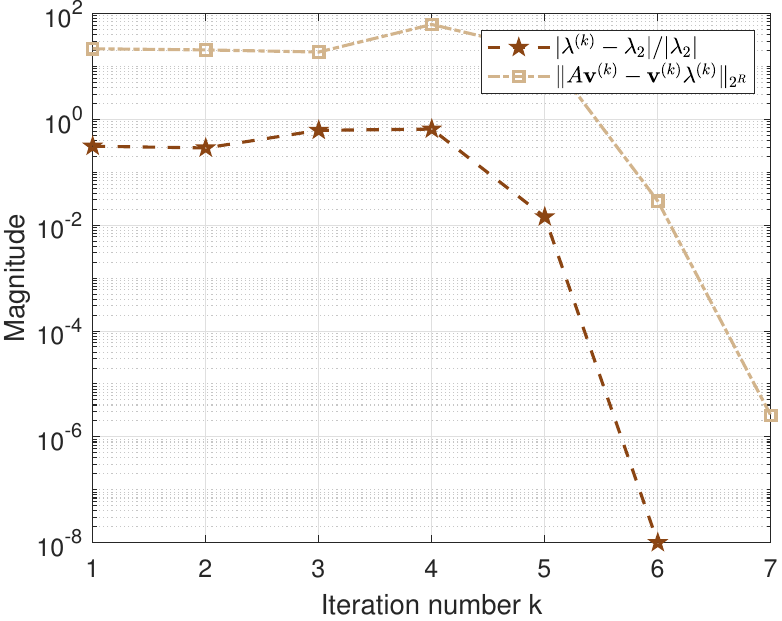}
	\\
	\includegraphics[scale=0.57]{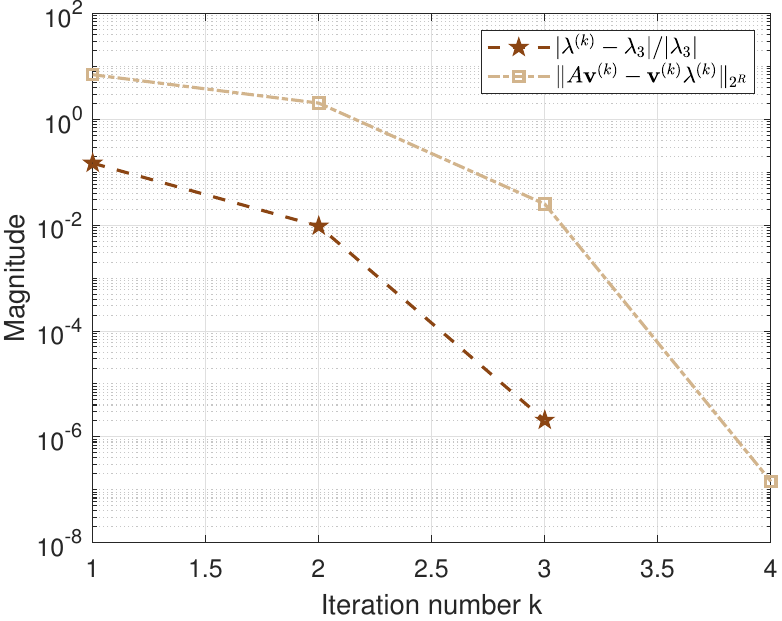}\quad
	\includegraphics[scale=0.57]{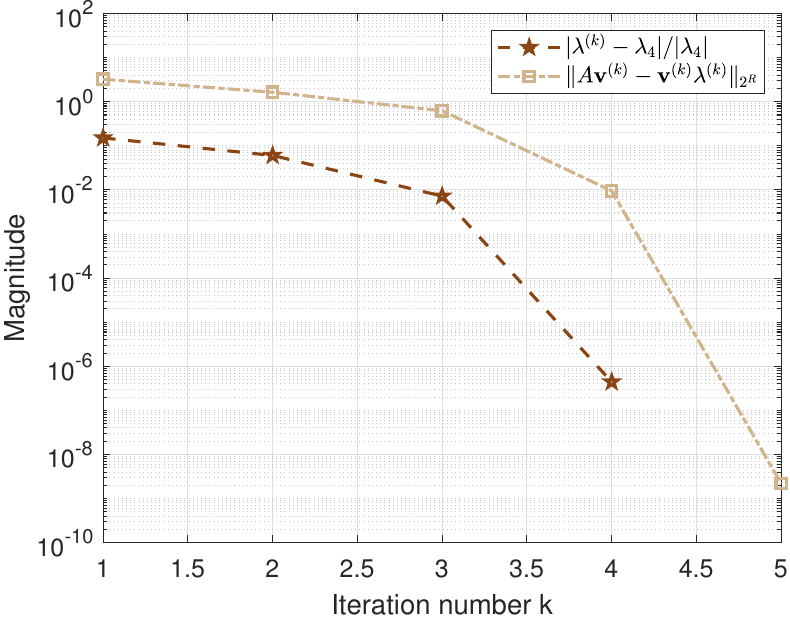}\\
	\includegraphics[scale=0.57]{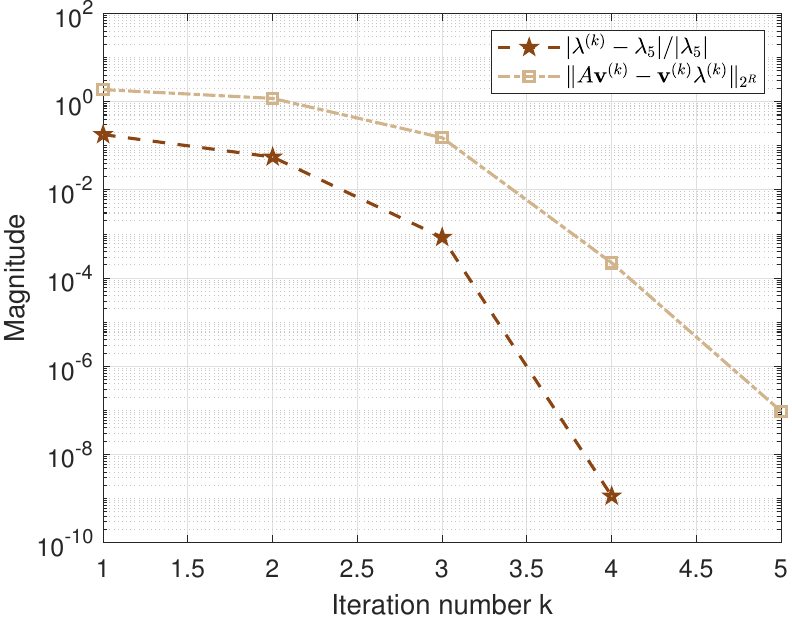}\quad
	\includegraphics[scale=0.57]{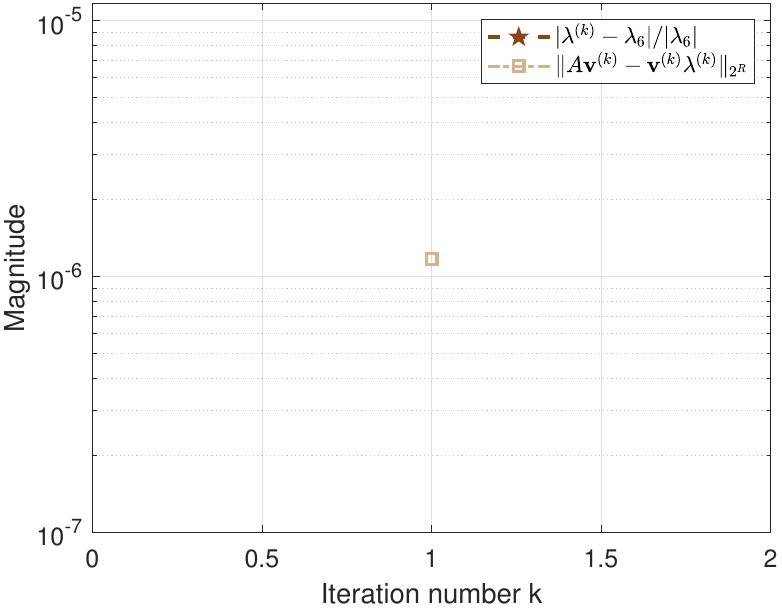}
	\caption{The convergence curves of RQI for computing all appreciable eigenvalues of the dual quaternion Hermitian matrix $\hat{A}$.}
	\label{fig.1}
\end{figure}

\vskip 2mm

\noindent \textbf{Example 2} ~ Consider Problem (\ref{problem}) with the random Laplacian matrices. In the multi-agent formation control, as detailed in \cite{qi2023dualcontrol,lin2014distributed}, the interactions among the rigid bodies are typically represented by an undirected graph. We make the assumption that these rigid bodies possess omni-directional capabilities, meaning that the ability of rigid body $i$ to sense rigid body $j$ is reciprocal. Consequently, a mutual visibility graph, denoted as $G=(V,E)$, is employed to model the collective behavior of multiple rigid bodies. The vertex set $V$ represents the individual rigid bodies, and an edge $(i, j) \in E$ is present if and only if rigid bodies $i$ and $j$ share mutual visual perception.

For a given graph $G=(V, E)$ with $n$ vertices and $m$ edges, the Laplacian matrix of $G$ can be defined as follows
$$
L=D-\hat{A},
$$
where $D$ is a diagonal real matrix, with diagonal entries signifying the degrees of the respective vertices, and $\hat{A}=\left(\hat{a}_{i j}\right)$ is
$$
\hat{a}_{i j}=\begin{cases}
	\hat{\mathbf{q}}_{i}^{\ast} \hat{\mathbf{q}}_{j}, & \mbox{if} ~ (i,j) \in E, \\
	\hat{0}, & \mbox{otherwise},
\end{cases}
$$
where $\hat{\mathbf{q}}=(\hat{q}_{i})\in \mathbb{DQ}^{n \times 1}$ with $\|\hat{\mathbf{q}}\|_{2}=1$. In the domain of multi-agent formation control, $\hat{q}_{i}$ represents the configuration of the $i^{\mathrm{th}}$ rigid body, while $a_{ij}$ signifies the relative configuration between the rigid bodies $i$ and $j$. Additionally, the relative configuration adjacency matrix is denoted as $\hat{A}$. To ensure that all eigenvalues are appreciable, we introduce a small perturbation to the Laplacian matrix. Let $\check{L}=L+\alpha I_{n}$, where $\alpha$ is a nonzero real number. Then $\lambda$ is an eigenvalue of $L$ if and only if  $\lambda+\alpha$ is an eigenvalue of $\check{L}$.

We define the residual 
\begin{equation*}
	\|\hat{L} \hat{\mathbf{v}}_{k}- \hat{\mathbf{v}}_{k} \lambda_{k}\|_{2^R}
\end{equation*}
of Problem (\ref{problem}) and the relative error 
\begin{equation*}
	\dfrac{|\lambda_{k}-\lambda_{i}|}{|\lambda_{i}|}
\end{equation*}
of the approximation to the desired eigenvalues during iteration. The two indices  are used to check whether the implemented methods converge to the desired eigenvalue with the associated eigenvector.

\begin{table}[H]
	\centering
	\caption{Numerical results of RQI and PM methods for computing an eigenvalue of the random Laplacian matrices, where $\mathrm{RSE}:=\|\hat{L} \hat{\mathbf{v}}- \hat{\mathbf{v}} \lambda\|_{2^R}$.
	}
	\label{tab2}
	\resizebox{\linewidth}{!}{
		\begin{tabular}{c|c|cccccc}
			\toprule
			Method&$n$& $10$ & $20$  & $50$ & $100$ & $200$ & $400$ \\
			\hline
			&IT&4  & 5 & 4 & 4 &4&7\\
			RQI&CPU& 0.0056  & 0.0116 & 0.0236 & 0.0783 &0.3915&4.6050\\
			&RSE&1.0710e-6  & 8.7114e-6 & 8.9354e-9 & 3.1376e-6 &5.6876e-12&1.8686e-9\\
			\hline
			&IT&104  & 408 & 1940 & 6925 & - &-\\
			PM &CPU&0.0653  & 0.2462 & 1.1549 & 4.6902 &-&-\\
			&RSE&1.2649e-5  & 1.3372e-5 & 2.3935e-5 & 3.6778e-5 &-&-\\
			\bottomrule
		\end{tabular}
	}
\end{table}

Then, we further consider the Laplacian matrix of circles with $10,20,50,100$ points. The residual of an eigenpair $(\lambda_{k}, \hat{\mathbf{v}}_{k})$ is measured by $\|\hat{L} \hat{\mathbf{v}}_{k}- \hat{\mathbf{v}}_{k} \lambda_{k}\|_{2^R}$. We use
either the residual $\|\hat{L} \hat{\mathbf{v}}_{k}- \hat{\mathbf{v}}_{k} \lambda_{k}\|_{2^R}\leq 10^{-5}$ or the number of iteration step has reached the upper limit of 15000 as the stopping criterion.

\begin{figure}[h]
	\centering
	\subfigure[10-point circle, $L\in \mathbb{DQ}^{10\times 10}$]{\includegraphics[scale=0.57]{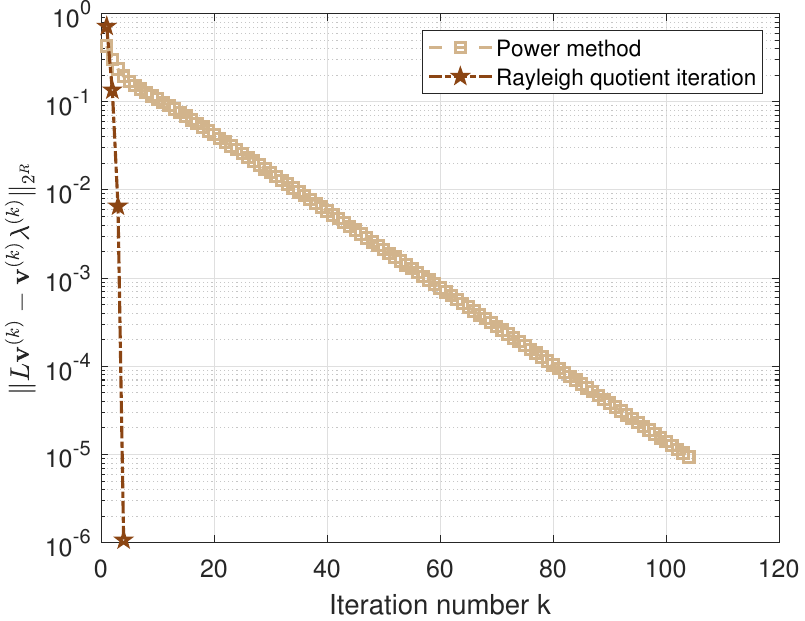}} 
	\subfigure[20-point circle, $L\in \mathbb{DQ}^{20\times 20}$]{\includegraphics[scale=0.57]{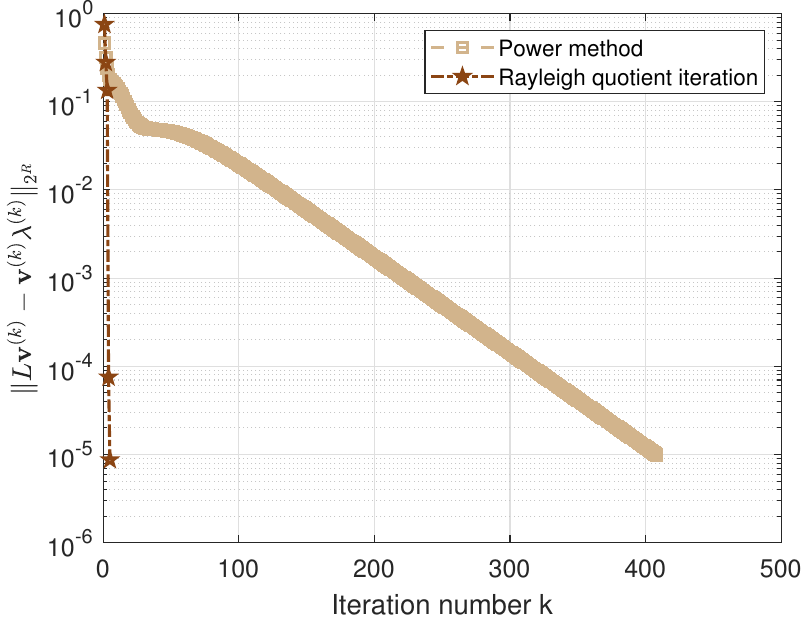}}
	\\ 
	\centering
	\subfigure[50-point circle, $L\in \mathbb{DQ}^{50\times 50}$]{\includegraphics[scale=0.57]{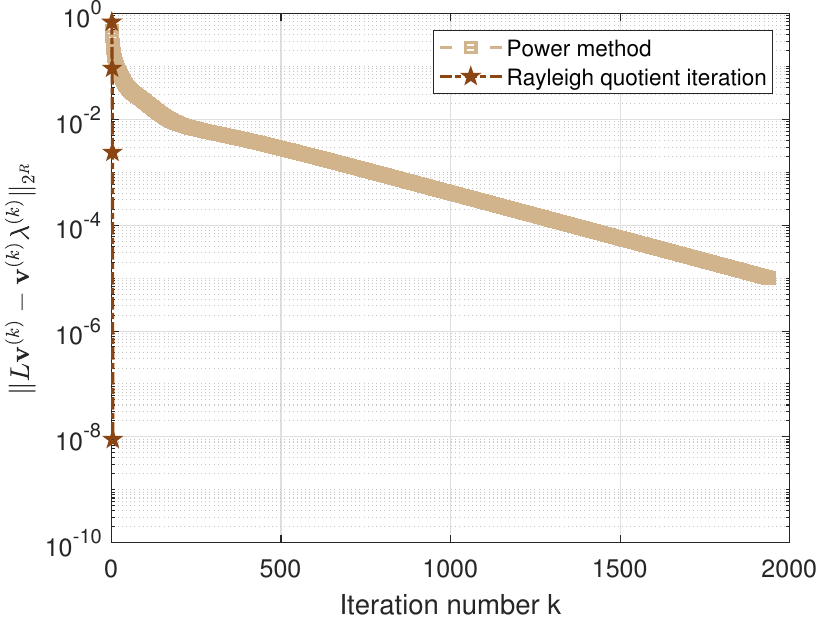}}
	\subfigure[100-point circle, $L\in \mathbb{DQ}^{100\times 100}$]{\includegraphics[scale=0.57]{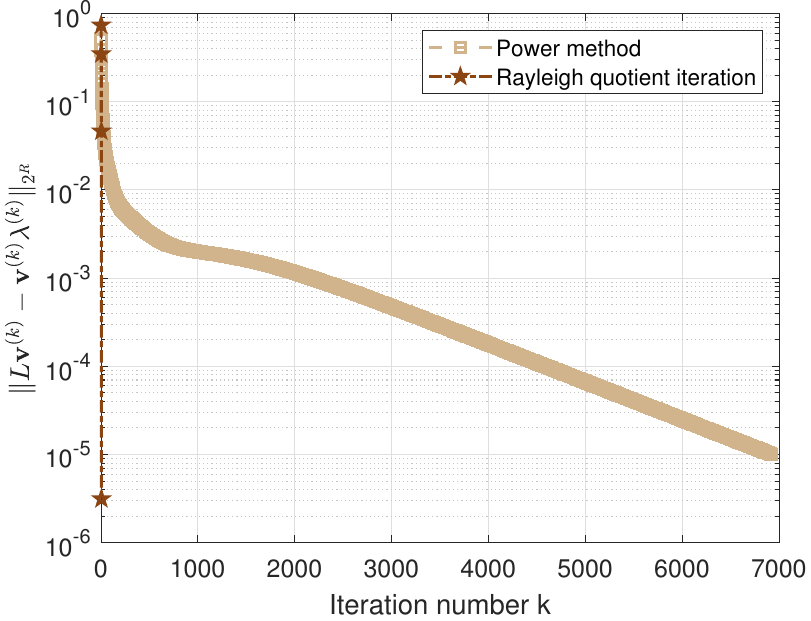}}
	\caption{The convergence curve of RQI and PM for computing an eigenvalue with the random Laplacian matrices.}
	\label{fig.3}
\end{figure}

Table \ref{tab2} lists their eigenvalues, the number of iterations, and CPU time of the Rayleigh quotient iteration and the power method \cite{cui2023power}. The symbol ``-" means that the iteration step $k$ has reached the upper limit of 15000, but it did not derive a solution.

As indicated by the data presented in Table \ref{tab2} and illustrated in Figure \ref{fig.3}, the Rayleigh quotient iteration demonstrates superior efficiency compared to the power method. It achieves rapid convergence in just a few iterations when calculating eigenvalues for Laplacian matrices of various sizes. This efficiency leads to shorter computation times and enhanced accuracy. These findings underscore the suitability of the Rayleigh quotient iteration method for computing eigenvalues in large scale dual quaternion Hermitian matrices.

\section{Conclusion}\label{section6}

\noindent In this paper, we have investigated the problem of solving right eigenvalues for dual quaternion Hermitian matrices. Firstly, we have presented the dual representation of dual quaternion matrices, which plays an important role in solving the dual quaternion linear systems. Secondly, leveraging this dual representation, we have proposed the Rayleigh quotient iteration for computing the eigenvalue with associated eigenvector of dual quaternion Hermitian matrices. Thirdly, by utilizing minimal residual property of the Rayleigh quotient, we have derived a convergence analysis for the Rayleigh quotient iteration. Finally, we have provided numerical examples to illustrate the efficiency of the Rayleigh quotient iteration for solving the dual quaternion Hermitian eigenvalue problem.

\vskip 3mm

\noindent \textbf{Funding}~ This work is supported by the National Natural Science Foundation of China under Grants 12371023, 12201149, 12361079, and the Natural Science Foundation of Guangxi Province under Grant 2023GXNSFAA026067.

\section*{Declarations}

\noindent \textbf{Conflict of interest}~ The authors declare no competing interests.

\end{document}